\documentclass[10pt,twoside]{article}

%-----------------------------------
\usepackage{amsmath}
\usepackage{amsfonts}
\usepackage{amssymb}
\usepackage{amscd}
\usepackage{amsthm}
\usepackage{amsbsy}
\usepackage{graphicx}
\usepackage{bm}
\usepackage{xypic}
\def\cal{\mathcal}
\def\Bbb{\mathbb}

\def\G{\Gamma}
\def\r{\rangle}
\def\l{\langle}
\def\t{\times}

\newtheorem{prop}{Proposition}[subsection]
\newtheorem{thm}{Theorem}[subsection]
\newtheorem{exm}{Example}[subsection]

\newtheorem{lemma}{Lemma}[subsection]
\newtheorem{cor}{Corollary}[subsection]

\newtheorem*{fjic}{(Fibered) Isomorphism conjecture}
\newtheorem{defn}{Definition}[subsection]

\newtheorem{rem}{Remark}[subsection]
%------------------------------------
\textwidth 125truemm
\textheight 195truemm
\paperwidth 169truemm
\paperheight 239truemm
\oddsidemargin 3mm
\evensidemargin 3mm
\headsep 6mm
\footskip 11mm
\baselineskip 4.5mm
%-------------------------------------
\pagestyle{myheadings}

\def\@oddhead{\hfill \shorttitle \hfill \thepage}
\def\@evenhead{\thepage \hfill \shortauthor \hfill}
\def\@oddfoot{}
\def\@evenfoot{}

%--------------------------------------
\date{January 27, 2014}
\markboth{\hfill{\rm  Sayed K. Roushon} \hfill}{\hfill {\rm The isomorphism 
conjecture for groups 
acting on trees} \hfill}
\title{\ \\[0.4cm] \ \\ \bf  The isomorphism conjecture for groups with 
generalized free product structure}
\author{Sayed K. Roushon\footnote{School of Mathematics, Tata Institute of 
Fundamental Research, Homi Bhabha Road, Mumbai-400005, India.}\hspace{2mm}}
%-------------------------------------
\begin{document}
%-------------------

\maketitle
%------------------------

\thispagestyle{empty}

%--------------------------------------

\begin{abstract}
\vskip 3mm\footnotesize{

\vskip 4.5mm
\noindent
In this article we study the $K$- and $L$-theory 
of groups acting on trees. We consider   
the problem in the context of the fibered isomorphism conjecture of 
Farrell and Jones. 
We show that in the class of residually finite 
groups it is enough to prove the conjecture 
for finitely presented groups with one end. Also, we deduce that the conjecture is 
true for the fundamental groups of graphs of finite groups and of trees of virtually 
cyclic groups. To motivate the reader we include a 
survey on some classical works on this subject.

\vspace*{2mm} \noindent{\bf 2000 Mathematics Subject Classification:
} Primary: 19J05, 19J10, 19J25, 19D35; Secondary: 57N37.

\vspace*{2mm} \noindent{\bf Keywords and Phrases: }} reduced projective class group, 
Whitehead group, surgery groups, 
fibered isomorphism conjecture, group action on tree.

\end{abstract}
\vspace*{-11.7cm}
%%%%%%%%%%%%%%%%%%%%%%%%%%%%%%%%%%%%%
\vspace*{2mm} \noindent\hspace*{82mm}
\begin{picture}(41,10)(0,0)\thicklines\setlength{\unitlength}{1mm}
\put(0,2){\line(1,0){41}} \put(0,16){\line(1,0){41}}%
\put(0,12.){\sl \copyright\hspace{1mm}Higher Education Press}
\put(0,7.8){\sl \hspace*{4.8mm}and International Press}%
\put(0,3.6){{\sl \hspace*{4.8mm}Beijing-Boston} }
\end{picture}

\vspace*{-17.6mm}\noindent{{\sl Handbook of\\
Group Actions}\\ALM\,31, pp.\,1--?} \vskip8mm
%%%%%%%%%%%%%%%%%%
\vspace*{10.6cm}
\section{Introduction} We study the computation of 
$K$- and $L$-theory, in terms of generalized homology theories,   
of groups acting on trees from the computation of the respective theories for the  
vertex and the edge stabilizers. We also give a survey on some classical works 
on this subject.

To motivate the reader we begin with some of the fundamental questions in 
Topology, which are answered by the $K$- and $L$-theory of groups.

$\bullet$ When is a space homotopy equivalent to a 
finite $CW$-complex? 

$\bullet$ When is a $CW$-complex homotopy equivalent to a 
manifold? 

$\bullet$ Are two homotopy equivalent closed manifolds homeomorphic? When the manifolds are  
closed and aspherical a positive answer is expected and it is also known as the {\it Borel 
Conjecture}. 

$\bullet$ When can we place a boundary to a noncompact manifold?

$\bullet$ When is a map $f:M\to {\Bbb S}^1$ from a manifold $M$  
homotopic to a fiber bundle projection?
 
These questions were well studied over the last 
several decades. Lower $K$-theory (that is, the Whitehead group $Wh(G)$, reduced 
projective class group $\tilde K_0({\Bbb Z}[G])$, negative $K$-groups $K_{-i}({\Bbb Z}[G])$, $i\geq 1$)   
and the surgery $L$-groups $L_n({\Bbb Z}[G])$ of 
the fundamental group $G$ contain the answers to the above and many more questions.
The subject is enormous and, therefore, we describe some of these obstruction 
groups and the contexts.

\subsection{Lower $K$-groups and surgery groups}

One can easily construct examples to answer the above questions in the negative. Therefore,  
we need several necessary conditions. 

A connected space $X$ is 
called {\it finitely dominated} if there is a connected finite complex $Y$ and 
maps $f:X\to Y$ and $g:Y\to X$ so that $g\circ f$ is homotopic to the identity 
map of $X$. Note that this is half of saying that $X$ is homotopy equivalent to a finite 
complex. It is also known that if $X$ is finitely dominated then it is 
homotopy equivalent to a countable complex, which can be checked using 
some facts from \cite{M}. Therefore, to investigate whether 
a space is homotopy equivalent to a finite complex we need to make the assumption that 
$X$ is finitely dominated. The {\it Wall finiteness obstruction} (\cite{Wall1}, \cite{Wall2}) is produced from a finitely 
dominated connected space $X$, which is an element $\omega (X)\in \tilde K_0({\Bbb Z}[\pi_1(X)])$
so that $X$ is homotopy equivalent to a finite complex if and only if $\omega(X)=0$ in 
$\tilde K_0({\Bbb Z}[\pi_1(X)])$. Furthermore, given an element $\omega$ in $\tilde K_0({\Bbb Z}[\pi_1(X)])$ 
there is a finitely dominated space whose obstruction is $\omega$. Let us recall that 
the {\it reduced projective class group} $\tilde K_0({\Bbb Z}[G])$ is by definition the 
free abelian group generated by the isomorphism classes $[P]$ of finitely generated projective 
${\Bbb Z}[G]$-modules $P$ modulo the following two relations. 
$$[P]+[Q]-[P\oplus Q], [F]$$ where $F$ is a finitely generated free 
${\Bbb Z}[G]$-module. See \cite{Ro} for some more basic information on this group.

The fourth problem also can be solved using the reduced projective class group. 
So let $M$ be a noncompact connected smooth orientable manifold. We would like 
to know whether there exist a compact manifold $N$ so that $N-\partial N$ is 
diffeomorphic to $M$.
Obviously the first obstruction will be if the fundamental group of $M$ is
not finitely presented and hence $M$ should have finitely many
ends to start with. Secondly the manifold should have finitely
generated homology. Thirdly, near the ends $M$ should 
behave like a compact
manifold product with an interval. As a consequence a necessary condition
is that outside a big enough compact subset the manifold should have the 
homology of a compact manifold of one less dimension. For
simplicity assume that the manifold has only one end. Another
necessity is that if $Y_i$ is an increasing sequence of compact subsets
of $N$, then the fundamental groups of the complements of $Y_i$ should stabilize
to a finitely presented group $\pi_1(U)=G$ at a certain stage, where $U$ is 
the complement of $Y_i$ for some large $i$. Such an end is also called a {\it tame} 
end. Under these necessary
conditions one shows that $C_*(\tilde U)$ is chain equivalent to a finite
chain complex $P_*(U)$ of finitely generated projective 
$R={\Bbb Z}[\pi_1(U)]$-modules. 

In \cite{Si} it was shown that if the Euler characteristic 
$\chi(P_*(U))=\Sigma_i (-1)^i[P_i(U)]=0$ in $\tilde K_0(R)$ and if dim$U\neq
3,4,5$ then $M$ is
the interior of a compact manifold with boundary.
For a detailed account on this work see \cite{We}.

If we want a finite complex to be homotopy equivalent to a closed orientable  
manifold there are several 
more necessary conditions needed. The first 
condition is that the homology of the complex must satisfy 
Poincar\'{e} duality. Such a complex is called a {\it Poincar\'{e}
complex}.
Let $X$ be a connected finite complex and for some $n$, $H_n(X, {\Bbb Z})\simeq {\Bbb Z}$. 
Let $[X]\in H_n(X, {\Bbb Z})$ be a generator such that the cap product 
with $[X]$ gives an isomorphism $H^q(X, {\Bbb Z})\to H_{n-q}(X, {\Bbb Z})$ for all $q$. Then $X$ is called 
a {\it Poincar\'{e} complex} with {\it orientation} $[X]$ and dimension $n$.

The second condition needed is the existence of a bundle over the complex, 
which has properties similar to the normal bundle of a manifold embedded 
in some Euclidean space. We describe this below.

Let $M$ be a closed connected oriented smooth manifold of dimension $n$. By the Whitney 
Embedding Theorem we can embed this manifold in 
${\Bbb R}^{n+k}\subset {\Bbb S}^{n+k}$ for $k \geq n$. Let $\nu_M$ be the 
normal bundle of $M$ in ${\Bbb S}^{n+k}$. Let $\tau_M$ be the 
tangent bundle of $M$. Then $\tau_M\oplus \nu_M$ is the product bundle as 
$M\subset {\Bbb S}^{n+k}-\{ \infty \} ={\Bbb R}^{n+k}$. Let $N$ be the subset 
of $\nu_M$ consisting of vectors of length $< \epsilon$, with respect to some Riemannian 
metric, for some $\epsilon > 0$. Then 
$N$ is an open neighborhood of $M$ in ${\Bbb S}^{n+k}$ and in fact diffeomorphic 
to the total space $E(\nu_M)$ of $\nu_M$. The one point compactification 
of $T(\nu_M)$ is called the {\it Thom space} of $\nu_M$. On the other hand the 
one point compactification $N^*$ of $N$ is homeomorphic to ${\Bbb S}^{n+k}/{\Bbb S}^{n+k}-N$. Hence, 
we get a map $\alpha:{\Bbb S}^{n+k}\to N^*\simeq T(\nu_M)$. One can check that 
$\alpha$ induces an isomorphism $H_{n+k}({\Bbb S}^{n+k}, {\Bbb Z})\to 
H_{n+k}(T(\nu_M), {\Bbb Z})$ and sends the canonical generator of 
$H_{n+k}({\Bbb S}^{n+k}, {\Bbb Z})$ to the generator of 
$H_{n+k}(T(\nu_M), {\Bbb Z})$, which comes from the orientation class 
$[M]$ via the Thom isomorphism. In this sense this map is of degree $1$. 

Therefore, for a Poincar\'{e} complex $X$ to be homotopy equivalent to a closed 
oriented smooth manifold it is necessary that there should be a bundle $\xi$ on $X$ and a 
degree $1$ map $\alpha:{\Bbb S}^{n+k}\to T(\xi)$. Once this information is given 
we can apply Thom Transversality Theorem to homotope $\alpha$ to $\beta$ so that 
$\beta^{-1}(X)$ ($:=K$) is a (oriented) submanifold of ${\Bbb S}^{n+k}$. Furthermore,  
the map $\beta$ restricted to the normal bundle $\nu_K$ of $K$ gives a linear bundle map onto 
$\xi$ and $\beta |_K$ is of degree $1$. A data of this type as in the following commutative 
diagram is called a {\it normal map} and is denoted by $(f,b)$.

\centerline{
\xymatrix{\nu_M\ar[r]^f\ar[d] & \xi\ar[d]\\
M\ar[r]^b & X}}

Where, $M$ is a closed connected smooth oriented manifold, $X$ is a Poincar\'{e} complex, 
$\nu_M$ is the normal bundle in some embedding of $M$ in ${\Bbb S}^{n+k}$, and $b$ is 
a degree 1 map. As we saw this map $b$ is obtained from the Transversality 
Theorem but is nowhere close to be a homotopy equivalence. In the simply connected and 
odd high dimension case in fact $b$ can be 
homotoped to a homotopy equivalence (\cite{Br}). So the next general step is to 
apply Surgery theory to $b$ to get another normal map, which is normally cobordant to the 
previous one, and to try to get closer to a homotopy equivalence. To achieve this we  
need that $b$ induces isomorphisms on the homotopy groups level. This can be 
done up to dimension $<[\frac{n}{2}]$ and, therefore, by Poincar\'{e} duality the 
main problem lies in dimension $[\frac{n}{2}]$. The complication in 
this middle dimension gives rise to Wall's {\it Surgery obstruction groups} 
$L^h_*({\Bbb Z}[\pi_1(X)])$. That is, given a normal map $(f,b)$, there is an obstruction 
$\sigma(f,b)$, which lies in the group $L^h_*({\Bbb Z}[\pi_1(X)])$, whose vanishing will ensure that the normal map 
can be normally cobordant to another normal map $(f', b')$, where $b'$ is a homotopy equivalence. See 
\cite{Wall3}.

\begin{rem}{\rm 
These surgery 
groups depend only on the fundamental group and on its orientation character 
$\omega: \pi_1(X)\to {\Bbb Z}_2$. Here as we are dealing with the oriented case 
this homomorphism is trivial. In the general situation we need to add the $\omega$ in the 
notation of the surgery groups. But in this article we avoid it for simplicity.}\end{rem} 

\begin{rem}\label{h=s}{\rm The upper script $h$ to the notation of the surgery groups is 
due to `homotopy equivalence'. There are problems, when one asks for `simple homotopy 
equivalence' in the normal map. Then a different surgery problem appears and gives 
rise to the surgery groups $L_n^s({\Bbb Z}[-])$. There are many other decorated surgery 
groups for different surgery problems, like $L_n^{\langle -\infty\rangle }({\Bbb Z}[-])$ 
mentioned before. But all of them coincide once we have the 
lower $K$-theory vanishing result of the group. This is checked using Rothenberg's exact 
sequence. For example if the Whitehead group of a group $G$ vanishes, then,  
$L_n^h({\Bbb Z}[G])=L_n^s({\Bbb Z}[G])$ for all $n$. 

One further remark is that all surgery groups are $4$-periodic. 
That is, for example, $L_n^h({\Bbb Z}[-])=L_{n+4}^h({\Bbb Z}[-])$ for all $n$.}\end{rem}

Once we have established that a complex is homotopy equivalent to a manifold, the 
next question is about the uniqueness of such a manifold. In surgery theory one gets 
two such manifolds $M_1$ and $M_2$ (when they exist) as $h$-cobordant, that is, 
there is a manifold $W$ with two boundary components homeomorphic to $M_1$ and 
$M_2$ and the inclusions $M_i\subset W$ are homotopy equivalences. Given such an  
$h$-cobordism $W$ there is an obstruction $\tau(W, M_1)$, which lies in a quotient of the 
$K_1$ of the integral group ring of the fundamental group of $M_1$ defined below. 

Let $\G$ be a group and ${\Bbb Z}[\G]$ be its integral group ring. $K_1({\Bbb Z}[\G])$ is by 
definition $\frac{GL({\Bbb Z}\G)}{[GL({\Bbb Z}\G), GL({\Bbb Z}\G)]}$ where 
$GL({\Bbb Z}\G)=\lim_{n\to\infty}GL_n({\Bbb Z}\G)$. There is a map 
from $\G$ to $K_1({\Bbb Z}[\G])$ sending an element $g\in \G$ to the 
$1\times 1$ matrix $<g>$. Define the {\it Whitehead group} $Wh(\G)$
of $\G$ as the quotient $\frac{K_1({\Bbb Z}[\G])}{\{ \pm <g>\ |\ g\in \G \}}$.  

Now, given an $h$-cobordism $W$ between two connected manifolds $M_1$ and $M_2$ 
with dim$W\geq 6$ there is an element $\tau(W, M_1)\in Wh(\pi_1(M_1))$ 
with the property: $\tau(W, M_1)=0$ implies that $W$ is homeomorphic to $M_1\times I$  
where $I=[0,1]$. This is called the {\it $s$-cobordism Theorem}. See \cite{Ker} and 
\cite{KS}. One consequence of this theorem is the Poincar\'{e} conjecture in 
high dimensions. Furthermore, given an element $\tau\in Wh(\pi_1(M_1))$ 
there is an $h$-cobordism $W$ over $M_1$ realizing $\tau$. 

There is another 
interpretation of the Whitehead group, which says that given a 
homotopy equivalence $f:K\to L$ between two connected finite complexes $K$ and $L$ there 
is an element $\tau(f)\in Wh(\pi_1(K))$ whose vanishing ensures that the 
map $f$ is homotopic to a {\it simple homotopy equivalence}. See \cite{Co}.

There are similar interpretation of the reduced projective class groups and 
negative $K$-groups in terms of some `special' kind of $h$-cobordism called 
{\it bounded h-cobordism}. See \cite{Pe} for some more on this matter. 

For the last problem in the list we started with  
one needs $Wh(-)$, $\tilde K_0({\Bbb Z}[-])$ and one 
more obstruction group called the Farrell Nil group. See \cite{F}. We describe 
the steps briefly. Let $f:M\to {\Bbb S}^1$ be a map where $M$ is a closed connected  
smooth manifold. We need to find whether $f$ can be homotoped to a fiber 
bundle projection. At first, obviously we need $f_*:\pi_1(M)\to \pi_1({\Bbb S}^1)$ to 
be surjective. Secondly, the kernel of $f_*$ must be finitely presented as it 
should be the fundamental group of a fiber of a possible fibration. This can be 
achieved by looking at the infinite cyclic covering $\tilde M$ of $M$ corresponding to the kernel of 
$f_*$. Hence, $\tilde M$ must be finitely dominated and, then, we 
use the Wall finiteness obstruction, which lies in $\tilde K_0({\Bbb Z}[\pi_1(\tilde M)])$. 
Next, a Nil group is defined in \cite{F}, which contains the obstruction to 
the existence of a framed submanifold $N$ of $M$ representing $f$ so that we get  
an $h$-cobordism $(\overline{M-N}, N_1, N_2)$. Here $N_1$ and $N_2$ are the two 
boundary components of $\overline{M-N}$. Finally, we need to make this $h$-cobordism 
a product to achieve a fiber bundle projection. There comes the dimension 
restriction and a Whitehead torsion type obstruction 
for the application of the $s$-cobordism theorem. 

\subsection{Methods of computations}

For computation of the lower $K$-groups and surgery groups there are generally 
two methods. 

\medskip
\noindent
{\bf Method A.} Using the following two classical assembly maps,

$$A_K:H_*(BG, {\Bbb K})\to K_*({\Bbb Z}[G])$$ and 
$$A_L:H_*(BG, {\Bbb L})\to L_*({\Bbb Z}[G]).$$

It is still an open problem that $A_K$ and $A_L$ are isomorphisms  
for torsion free groups, which also imply the Borel conjecture. 
Recall that the Novikov conjecture asks for the  
rational injectivity of $A_K$ and $A_L$.
In the case where $G$ has torsion, $A_K$ and $A_L$ need not be surjective or injective. 

There are now two issues. First, we need to study the maps $A_K$ and $A_L$,   
and then compute the domain homology theories using the Atiyah-Hirzebruch 
spectral sequence. 

In this method, the breakthrough came in the 
works of F. T. Farrell, W. C. Hsiang and L. E. Jones. See, for example, \cite{FH81}, \cite{FH83}, 
\cite{FJ86}, \cite{FJ89} and \cite{FJ93}. They introduced geometric 
methods to study the assembly maps, and were profoundly successful in 
proving the Borel and the Novikov conjectures for flat, hyperbolic, and more 
generally for non-positively curved Riemannian manifolds. There are 
results based on these works for several other classes of groups. See, for 
example, \cite{afr0}, \cite{BFJP}, \cite{FL}, \cite{FR}, \cite{R-2}, 
\cite{R-1} and \cite{R1} to \cite{R7}. Their methods 
were extended to prove the 
Borel conjecture in the hyperbolic and $CAT(0)$ groups cases (\cite{BL1}). The 
Frobenius induction technique was also fruitfully used in \cite{FH78}, \cite{FJ88}, \cite{S} and 
in \cite{BFL}.

\medskip
\noindent
{\bf Method B.} Groups built from 
the following two classical constructions inductively (also known as {\it 
generalized free product} (\cite{W})):

$\bullet$ Amalgamated free product $G=G_1*_HG_2$;

$\bullet$ $HNN$-extension $G=K*_L$. 

For example, surface groups and 
fundamental groups of Haken $3$-manifolds (\cite{W0})  
can be built from the trivial group using the above two constructions.
The combination of the two constructions inductively (also known as 
{\it generalized free product structure} on the group (\cite{W}))
is also equivalent to the situation when the group acts on a tree. 
Consequently, given a group $G$ acting on a tree one would like to compute 
the $K$ and $L$-theory of $G$ in terms of the computation for the 
vertex and edge stabilizers of the action.

In the next section we follow the second method to get the computation of the 
$K$ and the $L$-theory of new classes of groups from known cases.

There is a generalization of the assembly map we discussed above 
to have a suitable set up for groups with torsion. Farrell and Jones 
replaced the domain of the assembly map by some other homology theory 
and conjectured ({\it (fibered) isomorphism conjecture} \cite{FJ}) the new 
assembly maps to be isomorphisms for all 
groups. This new homology theory incorporates the information   
of the obstruction groups of the virtually cyclic subgroups of the 
group. In other words, the conjecture says that the $K$ and $L$-theory of any 
group are concentrated at the $K$ and $L$-theories of the virtually 
cyclic subgroups of the group. When restricted to torsion free groups these new assembly maps  
give the classical ones above. This conjecture has profound 
applications in Geometry, Topology and Algebra and imply 
many of the standard conjectures in high dimensional topology, for example the  
Borel conjecture, the Novikov conjecture, vanishing of the lower $K$-theory for 
torsion free groups, etc. 

We will discuss this aspect of the subject in Section 3 and will  
study the isomorphism conjecture in the context of groups acting on 
trees. One basic result we prove is that to prove the fibered isomorphism 
conjecture for the class of residually 
finite groups it is enough to prove the fibered isomorphism conjecture for 
finitely presented residually finite groups with one end. We also investigate 
cases where the vertex stabilizers belong to some well-known classes of groups; for 
example virtually cyclic groups, finitely generated abelian groups, polycyclic 
groups and nilpotent groups.

To end this Introduction we mention that the lower $K$-theory information is 
already embedded in the surgery theory. So it helps in deduction of many topological 
question, which needs surgery theory if we have an ad hoc information of the 
lower $K$-theory. Sometimes to avoid this problem one defines surgery 
theory (denoted by $L_*^{\langle -\infty\rangle }$) from where the embedded lower $K$-theory information 
is removed. For example, in the isomorphism conjecture in $L$-theory the 
assembly map is expected to be an isomorphism for the $L_*^{\langle -\infty\rangle }$-theory only.

%\newpage
\section{Lower $K$-theory and surgery $L$-theory of groups acting on trees}

In this section we survey some classical works on the $K$- and $L$-theory of groups acting 
on trees and also mention some recent developments.

\subsection{Lower $K$-theory of groups acting on trees}

We recall the work from \cite{W} on the $K$-theory 
of groups with generalized free product structure. 
Under certain (regularity) conditions a 
Mayer-Vietoris type exact sequence is produced in \cite{W} 
for $K$-theory and as a consequence the author 
computed lower $K$-theory for several classes of groups including 
knot groups, and more generally fundamental groups of submanifolds of the $3$-sphere and 
compact Haken $3$-manifolds. In the general situation the existence of a  
Mayer-Vietoris type exact sequence is obstructed by certain Nil 
groups. These Nil groups vanish if some algebraic assumptions are made 
on the group ring of the group. 

Recall that by the Bass-Serre theory, given a group acting on a tree 
(without inversion) there is an associated graph of groups whose 
fundamental group is isomorphic to the group. See \cite{DD}.

Let us 
recall that a {\it graph of groups} consists of a graph 
$\cal G$ (that is a one-dimensional $CW$-complex) and 
to each vertex $v$ or edge $e$ of $\cal G$ there is associated a group 
${\cal G}_v$ (called the {\it vertex group} of the vertex $v$) 
or ${\cal G}_e$ (called 
the {\it edge group} of the edge $e$) respectively with the assumption that 
for each edge $e$ and for its two end vertices $v$ and $w$ (say)
there are injective group homomorphisms ${\cal G}_e\to {\cal G}_v$ and 
${\cal G}_e\to {\cal G}_w$. (If $v=w$ then we also demand two 
injective homomorphisms as above). The {\it fundamental group} 
$\pi_1({\cal G})$ of a finite graph $\cal G$ is defined inductively so that 
in the simple cases of graphs of groups where the graph has 
two vertices and one edge or one vertex and one edge the 
fundamental group is the amalgamated free product or the 
$HNN$-extension respectively. For an infinite graph it is then 
defined taking limit on finite subgraphs. 
See \cite{DD} for some more on this subject.

Throughout the article we make the following conventions: 

We use the same notation for a graph of groups and
its underlying graph. $V_{\cal G}$ and $E_{\cal G}$ are  
respectively the set of all vertices and edges of a graph $\cal G$.

\begin{thm} (\cite{W})\label{gg} Let $\cal G$ be a graph of groups with $\pi_1({\cal G})\simeq G$ 
and $\{ G_v\}_{v\in V_{\cal G}}$ and 
$\{ G_e\}_{e\in E_{\cal G}}$ are the vertex and edge groups of $\cal G$ respectively. Then there is 
an exact sequence:
$$\oplus_{e\in E_{\cal G}}Wh(G_e)\to \oplus_{v\in V_{\cal G}}Wh(G_v)\to Wh(G)\to$$$$\to Nil\oplus 
(\oplus_{e\in E_{\cal G}}\tilde K_0({\Bbb Z}[G_e]))\to 
\oplus_{v\in V_{\cal G}}\tilde K_0({\Bbb Z}[G_v])$$

\noindent
where $Nil$ is the Nil group.\end{thm}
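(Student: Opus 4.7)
The plan is to reduce the general graph-of-groups statement to the two fundamental building blocks of generalized free products, namely the amalgamated free product $G = G_1 *_H G_2$ and the $HNN$-extension $G = K *_L$, and then to induct over the edges of $\cal G$. Pick a spanning tree $T$ of $\cal G$; collapsing one edge of $T$ at a time expresses $G$ as an iterated amalgamated free product, while each edge not in $T$ contributes one $HNN$-stage. At each inductive step the new edge/vertex pair should contribute one ``column'' to the Mayer--Vietoris sequence, the direct sums $\oplus_{e\in E_{\cal G}}$ and $\oplus_{v\in V_{\cal G}}$ are built additively, and the Nil contributions accumulate into the single $Nil$ summand.

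For the amalgamated case I would view $\Bbb{Z}[G]$ as a pushout-like ``generalized free product'' of rings: as a $\Bbb{Z}[H]$-bimodule it decomposes into the amalgamated sum $\Bbb{Z}[G_1]\oplus_{\Bbb{Z}[H]}\Bbb{Z}[G_2]$ plus higher alternating tensor terms coming from reduced words. Following Waldhausen, I would set up a category of bounded chain complexes of finitely generated projective $\Bbb{Z}[G]$-modules equipped with a Mayer--Vietoris presentation over $\Bbb{Z}[G_1]$, $\Bbb{Z}[G_2]$ and $\Bbb{Z}[H]$, and compute its $K$-theory by comparing to the analogous categories over the vertex and edge rings. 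The comparison cofibration sequence produces the six-term exact sequence relating $Wh$ and $\tilde K_0$, with an error term that turns out to be a Nil group built from nilpotent endomorphism data over $\Bbb{Z}[H]$.

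For the $HNN$-case one treats $\Bbb{Z}[G]=\Bbb{Z}[K*_L]$ as a kind of twisted Laurent extension of $\Bbb{Z}[K]$ relative to the two embeddings $\Bbb{Z}[L]\hookrightarrow \Bbb{Z}[K]$. The corresponding Bass--Heller--Swan-type theorem, in Waldhausen's version, again yields a long exact sequence of $Wh$ and $\tilde K_0$ groups with a Nil obstruction. Once both basic cases are in hand, splicing them together along the spanning-tree induction (and using continuity of $K$-theory under direct limits of finite subgraphs for the case when $\cal G$ is infinite) produces the full sequence as stated.

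The main obstacle is the algebraic analysis of the two basic cases: showing that the only extra term needed to make the Mayer--Vietoris sequence exact is a Nil group, identifying that group intrinsically in terms of nilpotent matrices over the edge group ring, and verifying that the boundary maps are the obvious ones induced by the inclusions $G_e\hookrightarrow G_v$. This is genuinely Waldhausen's main theorem on the $K$-theory of generalized free products, and its proof requires a careful manipulation of the category of based modules, in particular a normal form for chains of reduced words that splits the higher terms away from the Mayer--Vietoris part. A secondary but more routine difficulty is checking compatibility of the splicing: that at each inductive step the newly contributed Nil summand, and the new $\tilde K_0(\Bbb{Z}[G_e])$ and $Wh(G_v)$ summands, enter the long exact sequence in the claimed places without collapsing earlier contributions.
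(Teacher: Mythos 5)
Your outline matches the paper's (and Waldhausen's) approach essentially step for step: reduce to the amalgamated free product and $HNN$ building blocks, transfer the problem to the ring-theoretic setting of generalized free products and generalized Laurent extensions, invoke Waldhausen's splitting of the $K$-theory space into a Nil piece and the homotopy fiber of $K(C)\to K(A)\times K(B)$, and handle infinite graphs by direct limits over finite subgraphs. The only step the paper makes explicit that you gloss over is the final passage from the unreduced $K$-theoretic Mayer--Vietoris sequence to the stated $Wh$/$\tilde K_0({\Bbb Z}[-])$ sequence, which is carried out by comparing with the Mayer--Vietoris sequence of the generalized homology theory $H_*(B-,{\Bbb K})$ (the Whitehead space being defined from the assembly map); this is routine but should be recorded.
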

 
 The lower $K$-theory is also computed as the low degree homotopy groups of a certain space
called the {\it Whitehead space} defined in \cite{W}. The homotopy groups, denoted by $Wh_i(-)$, 
of the Whitehead space then become obstructions for the $K$-theory to form a generalized  
homology theory. In other words we have the following.

$$\cdots\to H_i(BG, {\Bbb K})\to K_i(G)\to Wh_i(G)\to H_{i-1}(BG, {\Bbb K})\to\cdots .$$

Now, there is a trick which helps in computation using the above theorem. This  
says that if $G$ acts on a tree and $A$ is another group then $G\times A$ also acts on the 
same tree with stabilizers product of the stabilizers of $G$ with $A$. In the particular case  
where $A$ is free abelian, it is known that $\tilde K_0({\Bbb Z}[G\times A])$ is a subgroup of 
$Wh(G\times A\times {\Bbb Z})$. Therefore, from an a priori information that the Nil 
group $Nil$ vanishes one gets many interesting vanishing results using the Five Lemma  
argument. 

Let us now talk about under what conditions the Nil group vanishes. 

\begin{prop} (\cite{W}) Let $G$ be as above. Then $Nil=0$ if for any edge group $G_e$ 
the group ring ${\Bbb Z}[G_e]$ is regular coherent.\end{prop}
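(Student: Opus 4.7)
The plan is to reduce to the two fundamental constructions and then invoke a resolution argument for regular coherent rings. By the Bass--Serre description of $G=\pi_1(\mathcal{G})$ as an iterated amalgamated free product and $HNN$-extension along the edges of $\mathcal{G}$, the Nil term in Theorem \ref{gg} decomposes as a direct sum indexed by the edges $e\in E_{\mathcal{G}}$, where each summand is Waldhausen's Nil group associated to the single edge $e$. This summand is built from nilpotent objects over the edge group ring $\mathbb{Z}[G_e]$, with bimodule structure supplied by the inclusions $G_e\hookrightarrow G_v,\,G_e\hookrightarrow G_w$ (and similarly in the $HNN$ case). So it suffices to prove that $\widetilde{Nil}(\mathbb{Z}[G_e];B_1,B_2)=0$ whenever $\mathbb{Z}[G_e]$ is regular coherent; here each $B_i$ is the induced bimodule, which is free as a right $\mathbb{Z}[G_e]$-module because the inclusions of groups give ${\Bbb Z}[G_v]$ a basis of coset representatives over ${\Bbb Z}[G_e]$.

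For the vanishing itself I would use Waldhausen's model of $\widetilde{Nil}$ as the K-theory of a suitable category of finitely generated projective $\mathbb{Z}[G_e]$-modules equipped with nilpotent endomorphism-type structure. The strategy is to show that every object of this category admits a finite resolution by objects on which the nilpotent structure is trivial, and then to invoke the resolution and additivity theorems in algebraic K-theory to conclude that the Nil K-theory vanishes. Coherence of $\mathbb{Z}[G_e]$ ensures that the category of finitely presented modules is abelian and that kernels of maps between finitely generated projectives remain finitely presented, so the iterative step stays inside a manageable subcategory. Regularity then supplies a finite projective resolution, which is the input the resolution theorem requires.

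The main obstacle is precisely the non-Noetherian nature of coherent rings: the classical Noetherian resolution arguments do not apply verbatim, and one must verify at each stage that the kernels produced are finitely presented so that the nilpotent structure propagates into the resolution. A secondary technical point is handling the bimodule twist in the $HNN$-case, where the relevant nilpotent data is an endomorphism of $P\otimes_{\mathbb{Z}[G_e]}B$ for a twisted $B$, and one must check that the resolution can be chosen compatibly with this twist. Once these two points are handled, the vanishing of $Nil$ is immediate from the structure theorem for Nil in \cite{W}, and the proposition follows by summing over the edges of $\mathcal{G}$.
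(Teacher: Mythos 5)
Your proposal follows essentially the same route as the paper, which simply cites Waldhausen and sketches exactly this argument: reduce to single amalgamation/HNN steps in the ring-theoretic setting, identify the Nil space $\tilde K{\cal {NIL}}(C,A',B')$ splitting off $\Omega K(R)$, and prove its contractibility when $C$ is regular coherent by a resolution argument in which coherence guarantees that the kernels stay finitely presented and regularity supplies the finite projective resolutions. The one imprecision is that in the edge-by-edge decomposition the bimodules are induced from the group rings of the complementary sub-graphs of groups rather than just the adjacent vertex groups, but this is harmless since the vanishing criterion depends only on the edge ring ${\Bbb Z}[G_e]$.
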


Here it is interesting to note that the vanishing of the Nil group depends 
only on the edge groups. It does not depend on the vertex groups or on 
the way the edge groups are embedded in the group $G$.

Let us recall that a ring $R$ is called {\it coherent} if its finitely presented 
modules form an abelian category and it is called {\it regular coherent} if, in addition, 
each finitely presented $R$-module has a finite-dimensional projective 
resolution. If a group $G$ acts on a tree there is a way to check 
if ${\Bbb Z}[G]$ is regular coherent.

\begin{rem}{\rm Here we remark that if ${\Bbb Z}[G]$ is regular coherent then $G$ is torsion free. 
See [\cite{BL}, Lemma 4.3].}\end{rem}

\begin{prop} (\cite{W}) Let $G$ be as above. Then ${\Bbb Z}[G]$ is regular coherent if ${\Bbb Z}[G_v]$ is regular 
coherent for each vertex $v$ and ${\Bbb Z}[G_e]$ is regular noetherian for each 
edge $e$.\end{prop}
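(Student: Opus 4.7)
The plan is to reduce the general graph of groups case to the two fundamental building blocks (amalgamated free products $G_1*_HG_2$ and HNN-extensions $K*_L$) and to treat the infinite graph case via a direct limit argument. For a finite graph of groups one collapses edges one at a time; each collapse is either an amalgamation or an HNN-extension over a single edge group, and both the regular coherent and regular noetherian hypotheses are preserved through the induction because the intermediate vertex groups themselves arise as smaller graphs of groups to which the proposition applies. The infinite case then follows by writing $G$ as a directed union of fundamental groups of finite subgraphs of groups and invoking the fact that regular coherence passes to such directed colimits along the flat structure maps $\mathbb{Z}[G'] \hookrightarrow \mathbb{Z}[G'']$ that Bass--Serre theory provides.

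For the basic step $G = G_1 *_H G_2$, the key tool is the Mayer--Vietoris short exact sequence of $\mathbb{Z}[G]$-modules associated to the Bass--Serre tree: for any $\mathbb{Z}[G]$-module $M$,
$$0 \to \mathbb{Z}[G]\otimes_{\mathbb{Z}[H]} M \to \mathbb{Z}[G]\otimes_{\mathbb{Z}[G_1]} M \oplus \mathbb{Z}[G]\otimes_{\mathbb{Z}[G_2]} M \to M \to 0,$$
with a parallel three-term sequence in the HNN case. Since $\mathbb{Z}[G]$ is free as a right module over each $\mathbb{Z}[G_v]$ and each $\mathbb{Z}[G_e]$, induction up to $G$ is exact. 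Given a finitely presented $\mathbb{Z}[G]$-module $M$, the strategy is to build resolutions piece by piece: the regular coherence of $\mathbb{Z}[G_v]$ produces finite-length projective resolutions of the vertex restrictions, which induce to finite projective resolutions of the induced modules $\mathbb{Z}[G]\otimes_{\mathbb{Z}[G_v]}(-)$; regular noetherianity of $\mathbb{Z}[G_e]$ does the same on the edge side, with the added benefit that over a noetherian ring every finitely generated submodule is automatically finitely presented, so kernels never escape the finitely presented world. Splicing these finite projective resolutions through the Mayer--Vietoris sequence yields a finite projective resolution of $M$ over $\mathbb{Z}[G]$, and the same machinery, applied to kernels of maps between finitely presented $\mathbb{Z}[G]$-modules, establishes the abelian category property needed for coherence of $\mathbb{Z}[G]$.

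The main obstacle will be the control of finite generation under restriction: a finitely presented $\mathbb{Z}[G]$-module does not generally restrict to a finitely presented module over a vertex ring, because $G$ is not finitely generated over $G_v$ in general. What rescues the argument is the asymmetry of the hypotheses: the noetherian assumption on the edge groups tames the kernels produced by the boundary map in the Mayer--Vietoris sequence, so that even though the naive restriction is unwieldy, the pieces actually entering the resolution are suitable finitely presented modules over the vertex or edge rings. Handling this bookkeeping carefully, and verifying that the same argument survives passage to the directed colimit for infinite graphs, is where the real work lies.
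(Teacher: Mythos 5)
The paper does not actually prove this proposition; it is quoted verbatim from Waldhausen \cite{W} as part of the survey, so there is no in-paper argument to compare against. Judged on its own terms, your skeleton --- induct on the number of edges, reducing to a single amalgamation or HNN-extension, and handle infinite graphs by a filtered colimit of finite subgraphs along the free (hence flat) inclusions $\mathbb{Z}[\pi_1({\cal G}')]\subset\mathbb{Z}[\pi_1({\cal G}'')]$ --- is the correct and standard reduction, and the Mayer--Vietoris sequence you write down is exact. But the one-edge case, which is the entire content of the proposition, has a genuine gap. The terms $\mathbb{Z}[G]\otimes_{\mathbb{Z}[G_i]}M$ in your sequence are inductions of the \emph{full} restrictions $\mathrm{Res}_{G_i}M$, and since $\mathbb{Z}[G]$ is an infinitely generated free module over $\mathbb{Z}[G_i]$, these restrictions are not finitely generated over $\mathbb{Z}[G_i]$ even when $M$ is finitely presented over $\mathbb{Z}[G]$. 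Consequently the regular coherence of the vertex rings cannot be applied to them, and there is nothing to splice: the finite projective resolutions you want to feed into the Mayer--Vietoris sequence do not exist for the modules that actually appear in it.

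You flag exactly this difficulty in your last paragraph, but the proposed rescue --- that Noetherianity of the edge ring ``tames the kernels of the boundary map'' --- is a hope rather than an argument; those kernels are $\mathbb{Z}[G]$-modules, not $\mathbb{Z}[G_e]$-modules, so the Noetherian hypothesis does not act on them directly. What Waldhausen actually does is replace the restrictions by a \emph{splitting} of $M$: one chooses a finite generating set, forms the finitely generated submodules it generates over $G_1$, $G_2$ and $H$, and builds a complex of finitely generated induced modules surjecting onto $M$. This complex is not exact on the nose; the Noetherian hypothesis on $\mathbb{Z}[H]$ is used to saturate the $H$-submodule (an ascending chain of finitely generated $\mathbb{Z}[H]$-submodules must stabilize) so that after enlargement one obtains an exact sequence whose kernel is again finitely generated, and one iterates. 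That saturation argument is the technical heart of the proof and is entirely absent from your proposal; without it, neither the coherence of $\mathbb{Z}[G]$ nor the finiteness of projective resolutions follows. The colimit step for infinite graphs, by contrast, is fine as you state it.
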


Therefore, if a group $G$ can be constructed inductively by amalgamated free product and 
$HNN$-extension from the trivial group then ${\Bbb Z}[G]$ is regular coherent.
For example if $G$ is one of the following groups: a free abelian group, a free group, the fundamental group of 
a $2$-manifold and the fundamental group of a Haken $3$-manifold (\cite{W0}), then ${\Bbb Z}[G]$ is regular coherent.

The exact sequence in the theorem is obtained from the low degree part of the long 
homotopy exact sequence of a certain fibration. At first, the problem is reformulated in the 
un-reduced $K$-theory case. We describe this below briefly.

First, the concept of generalized free product of groups is transferred to the category 
of rings and free product with amalgamation and generalized Laurent extensions of rings 
are defined as the ring theoretic setup of the problem. Next, passing to Quillen's  
$K$-theory spaces from rings, the problem of the relation between various 
constituent spaces is analyzed. For example in the case of amalgamated free 
product of rings one has rings $A,B$ and $C$, where $C$ is embedded as a ring in $A$ and $B$ in a 
{\it pure} (\cite{W}) way, the resulting ring $R$ is the colimit of the associated diagram. 
Then the existence of a Nil space $\tilde K{\cal {NIL}}(C, A',B')$ is proved and 
it is shown that the loop space $\Omega K(R)$ of the $K$-theory space of $R$ is the  
direct product, up to homotopy, of 
$\tilde K{\cal {NIL}}(C, A',B')$ and the homotopy fiber of a map 
$K(C)\to K(A)\times K(B)$. And a similar situation occurs in the generalized Laurent 
extension case. The homotopy groups of the Nil space computes the Nil groups. 
The vanishing of the Nil groups is guaranteed by the following theorem.

\begin{thm}(\cite{W})
If the ring $C$ is regular coherent then 
the corresponding Nil space is contractible.\end{thm}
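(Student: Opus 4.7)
The plan is to combine an approximation/resolution step that exploits the regular coherence of $C$ with a filtration argument internal to the nil category. Recall that $\tilde{K}\mathcal{NIL}(C,A',B')$ (and its analogue in the generalized Laurent case) is the reduced $K$-theory of a Waldhausen category of \emph{nil objects}: finitely generated projective $C$-modules $P$ equipped with a nilpotent structure map built from the bimodules $A'$ and $B'$. Objects whose nilpotent structure is the zero map form a summand that is split off in passing to the reduced theory, so contractibility reduces to showing that every nil object is $K$-theoretically equivalent to one with vanishing nil datum.

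First I would enlarge the underlying coefficients from finitely generated projective $C$-modules to the category $\mathcal{H}$ of finitely presented $C$-modules of finite projective dimension, endowed with compatible nil data. Coherence of $C$ says that finitely presented $C$-modules form an abelian category, so $\mathcal{H}$ is closed under kernels, cokernels and extensions; regularity says that each finitely presented $C$-module already admits a finite projective resolution, so $\mathcal{H}$ is actually all finitely presented $C$-modules. Waldhausen's approximation/resolution theorem then identifies the $K$-theory of the original projective nil category with that of the enlarged one, so it suffices to prove vanishing after enlargement.

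Second, inside the enlarged nil category I would filter an arbitrary object $(P,\nu)$ by the submodules $\nu^i P$. Nilpotence gives a finite filtration $0=\nu^{k+1}P\subset\nu^k P\subset\cdots\subset\nu P\subset P$, each step preserved by the nil structure, and each associated subquotient $\nu^i P/\nu^{i+1}P$ inherits the \emph{zero} nil datum, since $\nu$ sends $\nu^i P$ into $\nu^{i+1}P$. Coherence keeps every subquotient inside $\mathcal{H}$. Waldhausen's additivity theorem then identifies the class of $(P,\nu)$ with the sum of the classes of its graded pieces, all of which have trivial nil structure and therefore contribute zero to the reduced nil $K$-theory. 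Consequently the Nil space is contractible.

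The main obstacle is the bookkeeping in the second step: one must verify that the filtration by powers of $\nu$ genuinely is a cofibration sequence in the Waldhausen-category sense so that additivity applies, and one must track the compatibility of the $A'$- and $B'$-components of the nil datum with each step of the filtration (this is where Waldhausen's precise definition of nil objects, rather than a single endomorphism, has to be handled carefully). Both halves of the regular-coherent hypothesis are indispensable here: coherence to keep subquotients finitely presented and the enlarged category abelian, and regularity so that the enlargement is $K$-theoretically equivalent to the projective one via approximation.
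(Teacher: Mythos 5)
This statement is quoted in the survey directly from Waldhausen's paper \cite{W}; the text supplies no proof of it, so there is no in-paper argument to measure you against. On its own merits your sketch is sound in outline and is, in essence, the argument Waldhausen gives: use regularity and coherence of $C$ to pass, via the resolution theorem, from projective coefficient modules to finitely presented ones, then d\'evissage an arbitrary nil object along the filtration determined by powers of its nilpotent structure map, so that by additivity its class is a sum of classes of objects with trivial nil datum, which are exactly what gets split off in forming the reduced theory. Two points deserve more care than your write-up gives them. First, in the amalgamated free product case a nil object is not one module with a nilpotent endomorphism but a pair $(P,Q)$ of projective $C$-modules with cross maps $P\to A'\otimes_C Q$ and $Q\to B'\otimes_C P$ whose composites are nilpotent; the filtration must be taken by images of these composites on both factors simultaneously, and the "zero nil datum on subquotients" claim has to be checked for both cross maps (you flag this yourself, correctly, as the delicate bookkeeping). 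Second, the nilpotence degree is not bounded over the whole category, so the additivity argument is applied on each finite stratum and one concludes by writing the nil category as a filtered colimit, using that $K$-theory commutes with such colimits; without that remark the space-level statement (contractibility, not merely vanishing of $\pi_0$) does not quite follow. With those two repairs the proposal is a correct account of the standard proof.
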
 

The following Mayer-Vietoris 
type exact sequence is then deduced from the long homotopy exact sequence of 
a certain fibration.

$$\cdots\to K_i(C)\to K_i(A)\oplus K_i(B)\to K_i(R)\to K_{i-1}(C)\to\cdots\to K_0(R).$$

When $C$ is not regular coherent the exotic Nil terms appear with $K_*(C)$. Finally, 
to complete the proof of the theorem one needs the Mayer-Vietoris sequence of 
generalized homology theory and the two long exact sequences above.

Now, we state a corollary.

\begin{cor} (\cite{W}) \label{white} There is a class of groups $\cal {CL}$ containing the following 
groups such that for any $G\in {\cal {CL}}$ the Whitehead space of $G$ is contractible and 
hence their lower $K$-theory vanish.  

$\bullet$ free groups and free abelian groups.

$\bullet$ poly-infinite cyclic groups.

$\bullet$ torsion free one-relator groups.

$\bullet$ fundamental groups of 2-manifolds different from the projective plane.

$\bullet$ fundamental groups of compact orientable 3-manifolds any irreducible summand of which 
either has non-empty boundary, or is simply connected, or is Haken.

$\bullet$ fundamental groups of submanifolds of the 3-sphere.

$\bullet$ subgroups of the groups listed above.\end{cor}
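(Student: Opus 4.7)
The plan is to organize the proof around a single structural class. Define $\mathcal{CL}$ to be the smallest class of groups closed under passage to subgroups, containing the trivial group, and closed under the operations $(G_1,G_2,H)\mapsto G_1*_HG_2$ and $(K,H)\mapsto K*_H$ whenever $H,G_1,G_2,K\in\mathcal{CL}$ and $\mathbb{Z}[H]$ is regular coherent. Equivalently, by Bass--Serre theory, $\mathcal{CL}$ consists of groups admitting a terminating hierarchy of graph-of-groups decompositions whose edge groups have regular coherent integral group rings.

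I would then prove the main contractibility claim by induction on the hierarchy length of $G\in\mathcal{CL}$. The base case is the trivial group, whose Whitehead space is contractible. For the inductive step, write $G=\pi_1(\mathcal{G})$ for some graph of groups $\mathcal{G}$ whose vertex and edge groups sit at strictly smaller depth. Since every edge-group ring is regular coherent, the associated Nil space is contractible (the contractibility theorem stated in the excerpt), so Theorem~\ref{gg} upgrades to a Nil-free Mayer--Vietoris long exact sequence
\[
\cdots\to\bigoplus_{e\in E_{\mathcal{G}}}Wh_i(G_e)\to\bigoplus_{v\in V_{\mathcal{G}}}Wh_i(G_v)\to Wh_i(G)\to\bigoplus_{e\in E_{\mathcal{G}}}Wh_{i-1}(G_e)\to\cdots
\]
in every degree. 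By induction the vertex and edge terms vanish, so $Wh_i(G)=0$ for all $i$, i.e., the Whitehead space of $G$ is contractible. The vanishing of lower $K$-theory then follows from the fibration $H_i(BG,\mathbb{K})\to K_i(G)\to Wh_i(G)\to H_{i-1}(BG,\mathbb{K})$ displayed in the text.

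It remains to verify that each listed family lies in $\mathcal{CL}$. Free and free abelian groups are iterated HNN extensions over trivial (resp.\ free abelian) edge groups, and $\mathbb{Z}[\mathbb{Z}^n]$ is regular noetherian. Poly-infinite cyclic groups arise by iterated HNN extensions over poly-$\mathbb{Z}$ edge groups. Torsion-free one-relator groups reduce to free groups via the Magnus hierarchy with finitely generated free edge subgroups. Fundamental groups of $2$-manifolds different from $\mathbb{RP}^2$ decompose along essential simple closed curves into amalgams over $\mathbb{Z}$, terminating at free groups. For the listed orientable $3$-manifolds, the Kneser--Milnor decomposition reduces to the irreducible case; each irreducible summand is simply connected, has non-empty boundary, or is Haken, and for Haken manifolds a Waldhausen hierarchy along incompressible surfaces produces amalgamations whose edge groups are surface groups --- already in $\mathcal{CL}$ with regular coherent integral group ring. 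Submanifolds of $\mathbb{S}^3$ fall within this $3$-manifold setting, and subgroup closure is built into the definition of $\mathcal{CL}$.

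The main obstacle is maintaining regular coherence of the edge group rings throughout each hierarchy: the verification steps must be interleaved (the $2$-manifold case is used inside the $3$-manifold case, the free case inside the one-relator case), and one needs the propositions of the excerpt that transmit regular coherence through amalgamated and HNN constructions. Once these inputs are assembled, the main induction via Theorem~\ref{gg} is uniform and routine.
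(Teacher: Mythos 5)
The paper does not actually prove this corollary: it is quoted verbatim from Waldhausen \cite{W}, and the surrounding text only assembles the ingredients (Theorem \ref{gg}, the contractibility of the Nil space over a regular coherent amalgamating ring, the two propositions transmitting regular coherence, and the fibration relating $H_*(BG,{\Bbb K})$, $K_*(G)$ and $Wh_*(G)$). Your reconstruction follows Waldhausen's actual strategy for the graph-of-groups steps, and your verification that each listed family admits a hierarchy with regular coherent edge rings (Magnus hierarchy for one-relator groups, curves on surfaces, Kneser--Milnor plus the Haken hierarchy for the $3$-manifolds) is the right bookkeeping.

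There is, however, a genuine gap at the last bullet. You cannot obtain closure under subgroups by simply writing it into the definition of $\cal {CL}$: your induction is on hierarchy length through the operations $G_1*_HG_2$ and $K*_H$, and at a subgroup-passage step there is no Mayer--Vietoris sequence relating $Wh_i$ of a subgroup to $Wh_i$ of the ambient group, so the inductive argument says nothing about such elements of your class. The missing idea is the Bass--Serre one: a subgroup $K$ of $\pi_1({\cal G})$ acts on the Bass--Serre tree of ${\cal G}$, hence is itself the fundamental group of a graph of groups whose vertex and edge groups are intersections of $K$ with conjugates of the original vertex and edge groups; one must then run a simultaneous induction showing both that the Whitehead spaces of these smaller groups are contractible and that their edge rings remain regular coherent (regular coherence is not formally inherited by subgroups either). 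This in turn forces a closure-under-filtered-colimits clause, absent from your definition, because the induced decomposition of a subgroup is typically an infinite graph and the subgroups arising (for instance infinite-rank free subgroups of surface or $3$-manifold groups, or the commutator subgroup of a knot group) are not finitely generated; one needs that the Whitehead space commutes with filtered colimits to pass from finite subgraphs to the whole decomposition. Without these two ingredients the argument establishes the corollary only for the explicitly listed families, not for their subgroups.
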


\subsection{Surgery $L$-theory of groups acting on trees}
The problem in the surgery $L$-theory case was studied in 
\cite{Ca1}, \cite{Ca2}, \cite{Ca3}, \cite{Ca4} and \cite{Sh}. 
Many variations of the problem were studied 
by mathematicians before. Here we recall the above works, which give a 
strong generalization of previous results.
As in the $K$-theory case here also similar $\mathrm{UNil}$ obstruction groups were defined, which 
together with some lower $K$-theory non-triviality obstruct Mayer-Vietoris type 
exact sequence for surgery groups. Again under 
some group theoretic ({\it square root closed}) 
condition it was shown that the $\mathrm{UNil}$ groups vanish. Here it is to be noted that 
in contrast to the situation of the vanishing of the Nil groups  
where the sufficient condition was that only the amalgamated group be regular coherent, for the 
vanishing of the $\mathrm{UNil}$ group the square root closed nature of the embedding of the 
amalgamated group in the whole group is relevant.

Let $M$ and $N$ be closed manifolds, $K$ a codimension $1$  
submanifold of $N$ and $f:M\to N$ a homotopy equivalence. 

\medskip
\noindent
{\bf Splitting problem.} When can $f$ be homotoped to a map 
$g:M\to N$ so that $g^{-1}(K)$ is a submanifold of $M$ and 
$g|_{g^{-1}(K)}$ and $g|_{M-g^{-1}(K)}$ are homotopy equivalences? 

If such a $g$ exists then we say $f$ is {\it splittable}. 

\medskip

The answer to this problem is related to the computation of the 
surgery obstruction groups of a generalized free product in terms of 
its constituent groups. 

Let $M$ be a manifold and $N$ be a codimension $1$ submanifold of $M$. Recall that 
$N$ is called {\it two sided} in $M$ if $N$ has a neighborhood $V$ in $M$ such that 
$V-N$ has two components or equivalently the normal bundle of $N$ in $M$ 
is trivial. If both $M$ and $N$ are orientable then $N$ is always 
two sided in $M$. A subgroup $H$ of a group $G$ is called {\it square root closed} if 
for any $g\in G$, $g^{2}\in H$ implies $g\in H$.

We would also need the existence of the exact sequence involving Whitehead groups and 
reduced projective class groups of the constituent groups of a generalized 
free product as in Theorem \ref{gg}.

\begin{thm} (\cite{Ca1}) \label{cappell1} Let $M$ and $N$ be two closed connected 
manifolds of dimension 
$n\geq 5$ and $f:M\to N$ a homotopy equivalence. Let $K$ be a codimension 
$1$ submanifold of $N$. For simplicity of the presentation assume that $N-K$ has 
two components $N_1$ and $N_2$. Assume the following.

$\bullet$ $K$ is two sided in $N$,

$\bullet$ $\pi_1(K)\to \pi_1(N)$ is injective,

$\bullet$ $\pi_1(K)$ is square root closed in $\pi_1(N)$.

Then $f$ is splittable if and only if the Whitehead torsion $\tau (f)\in Wh(\pi_1(N))$ 
of $f$ is in the image of the map $$Wh(\pi_1(N_1))\oplus Wh(\pi_1(N_2))\to Wh(\pi_1(N)).$$
\end{thm}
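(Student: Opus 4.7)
The plan is to treat both implications using the amalgamated product decomposition $\pi_1(N) = \pi_1(N_1) *_{\pi_1(K)} \pi_1(N_2)$, which is a consequence of Van Kampen's theorem together with the two-sided and $\pi_1$-injective hypotheses on $K$, combined with the $L$- and $K$-theory Mayer--Vietoris machinery for generalized free products.

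For necessity, suppose $f \simeq g$ with $L := g^{-1}(K)$ a codimension-one submanifold and $g|_L : L \to K$, $g|_{M_i} : M_i \to \overline{N_i}$ homotopy equivalences, where $M_i := g^{-1}(\overline{N_i})$. Whitehead torsion is a homotopy invariant, so $\tau(f) = \tau(g)$, and the gluing formula for Whitehead torsion along the pushout $M = M_1 \cup_L M_2 \to N_1 \cup_K N_2 = N$ gives
\[
\tau(f) = (j_1)_* \tau(g|_{M_1}) + (j_2)_* \tau(g|_{M_2}) - (j_0)_* \tau(g|_L),
\]
where $j_i : \pi_1(N_i) \hookrightarrow \pi_1(N)$ and $j_0 : \pi_1(K) \hookrightarrow \pi_1(N)$ are the inclusion-induced maps. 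Since $j_0$ factors through $j_1$, the subtracted term is absorbed into the image from $Wh(\pi_1(N_1))$, placing $\tau(f)$ in the image of $Wh(\pi_1(N_1)) \oplus Wh(\pi_1(N_2)) \to Wh(\pi_1(N))$.

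For sufficiency, first make $f$ transverse to $K$, producing $L := f^{-1}(K)$, ambient pieces $M_i := f^{-1}(\overline{N_i})$, and degree-one normal maps $f|_L : L \to K$ and $f|_{M_i} : M_i \to \overline{N_i}$ relative to $L$. The obstructions to deforming $f$ within its homotopy class so that each of these becomes a homotopy equivalence fit into Cappell's Mayer--Vietoris sequence in surgery theory for the amalgamated product,
\[
\cdots \to L_n(\pi_1(K)) \to L_n(\pi_1(N_1)) \oplus L_n(\pi_1(N_2)) \to L_n(\pi_1(N)) \xrightarrow{\partial} \mathrm{UNil}_n \oplus L_{n-1}(\pi_1(K)) \to \cdots .
\]
Under the square-root closed hypothesis on $\pi_1(K) \subset \pi_1(N)$, Cappell's vanishing theorem yields $\mathrm{UNil}_* = 0$, so the $L$-theoretic splitting obstruction, which is $\partial$ applied to $\sigma(f) = 0$, is automatically zero in $L_{n-1}(\pi_1(K))$. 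Since $n \geq 5$, ambient surgery on $L$ (Cappell's splitting procedure) now produces a homotopy $f \simeq g$ with $g|_L$ and each $g|_{M_i}$ a homotopy equivalence.

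The Whitehead torsion hypothesis controls the remaining $K$-theoretic freedom in matching the torsions of the split pieces to $\tau(f)$: via the Rothenberg sequence and the $K$-theory Mayer--Vietoris of Theorem \ref{gg}, any two splittings differ by classes in $Wh(\pi_1(N_i))$ that can be realized geometrically by attaching $h$-cobordisms onto the $M_i$ (invoking the $s$-cobordism theorem in dimension $\geq 6$ for the product cobordism correction). The prescribed torsion decomposition $\tau(f) = (j_1)_*\tau_1 + (j_2)_*\tau_2$ exists precisely when $\tau(f)$ lies in the image in question, and it can then be realized on the split pieces, completing the construction of a genuine splitting. The main obstacle is the vanishing of $\mathrm{UNil}_*$ under square-root closedness: this is the deep algebraic input of Cappell, resting on an analysis of unitary nilpotent hermitian forms over $\mathbb{Z}[\pi_1(K)]$, and is where the square-root closed hypothesis enters essentially; the rest of the argument is careful bookkeeping of surgery and Whitehead torsion data along the codimension-one decomposition.
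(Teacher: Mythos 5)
A preliminary remark: the paper states this theorem as a quoted result of Cappell (\cite{Ca1}) and gives no proof of it, so there is no in-paper argument to compare against; what follows assesses your sketch on its own terms. Your necessity direction is correct and standard: the sum formula for Whitehead torsion applied to $M=M_1\cup_L M_2\to \overline{N_1}\cup_K\overline{N_2}$ gives $\tau(f)=(j_1)_*\tau(g|_{M_1})+(j_2)_*\tau(g|_{M_2})-(j_0)_*\tau(g|_L)$, and since $(j_0)_*$ factors through $(j_1)_*$ this class lies in the stated image. Note that this half uses neither the square-root-closed hypothesis nor any $L$-theory.

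The sufficiency direction has a genuine gap, and in fact a circularity. You deduce the splitting from the Mayer--Vietoris sequence in surgery theory (Theorem \ref{cappell4}) together with the vanishing of $\mathrm{UNil}$. But, as the paper itself records, that exact sequence is a \emph{consequence} of (a relative version of) the splitting theorem you are trying to prove: the boundary map $\partial$ is defined by splitting a normal map along the codimension-one submanifold, and exactness is established by geometric splitting arguments. So the sequence cannot serve as input here. Moreover, the obstruction to be killed is not ``$\partial$ applied to $\sigma(f)=0$'': $f$ is already a homotopy equivalence, so its surgery obstruction is vacuous; the relevant invariant is the codimension-one splitting obstruction, whose $K$-theoretic component is controlled by the image of $\tau(f)$ in $\tilde K_0({\Bbb Z}[\pi_1(K)])\oplus Nil$ (cf.\ Theorems \ref{gg} and \ref{cappell2}) and whose $L$-theoretic component is the $\mathrm{UNil}$ class. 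What is actually needed is Cappell's direct geometric argument --- the nilpotent normal cobordism construction --- which improves $L=f^{-1}(K)$ through a sequence of ambient normal cobordisms, using the square-root-closed hypothesis to trivialize the nilpotent piece of the obstruction and the hypothesis on $\tau(f)$ to kill the projective class obstruction to finiteness of the split pieces. Your closing paragraph gestures at the right bookkeeping, but as written the sufficiency half restates the theorem rather than proving it.
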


Now, recall that there is a map defined in \cite{W} (Theorem \ref{gg}) 
$$\phi:Wh(\pi_1(N))\to \tilde K_0({\Bbb Z}[\pi_1(K)])\oplus Nil\to\tilde K_0({\Bbb Z}[\pi_1(K)]).$$ 

\begin{thm} (\cite{Ca1})\label{cappell2} Under the same hypothesis as in the above theorem if 
$\phi(\tau (f))=0$ then there exists an $h$-cobordism $W$ between $M$ and 
another manifold $M'$ with a map $F:W\to N$ extending $f$ so that $F|_{M'}$ is 
splittable.\end{thm}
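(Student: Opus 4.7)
The strategy is to combine the realizability of Whitehead torsion by $h$-cobordisms (the $s$-cobordism theorem) with the exact sequence of Theorem~\ref{gg} and the splitting criterion of Theorem~\ref{cappell1}. The idea is to insert an $h$-cobordism between $M$ and a new manifold $M'$ whose Whitehead torsion realizes exactly the Nil-component of $\tau(f)$; the resulting map $F|_{M'}$ will then have torsion lying in the image of $Wh(\pi_1(N_1))\oplus Wh(\pi_1(N_2))\to Wh(\pi_1(N))$, after which Theorem~\ref{cappell1} finishes the job.

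Set $G=\pi_1(N)$, $H=\pi_1(K)$, $G_i=\pi_1(N_i)$, so that $G\cong G_1*_HG_2$. Theorem~\ref{gg} gives an exact sequence
$$Wh(H)\to Wh(G_1)\oplus Wh(G_2)\to Wh(G)\xrightarrow{\Psi}Nil\oplus\tilde K_0({\Bbb Z}[H])\to\tilde K_0({\Bbb Z}[G_1])\oplus\tilde K_0({\Bbb Z}[G_2]),$$
and the map $\phi$ of the statement is $\Psi$ followed by projection onto the second factor. The hypothesis $\phi(\tau(f))=0$ therefore means $\Psi(\tau(f))=(\nu,0)$ for some $\nu\in Nil$. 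The last map in the sequence vanishes on the Nil summand, since $Nil$ maps only into $\tilde K_0$ of the edge group, so by exactness every element of $Nil$ lies in the image of $\Psi$. Pick $\sigma\in Wh(G)$ with $\Psi(\sigma)=(\nu,0)$, and use the isomorphism $f_*:\pi_1(M)\to\pi_1(N)$ to transport $\sigma$ to $Wh(\pi_1(M))$.

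Next, invoke the realization half of the $s$-cobordism theorem, valid since $\dim M\geq 5$, to produce an $h$-cobordism $(W;M,M')$ with $\tau(W,M)$ equal (up to the standard Milnor duality involution) to $\sigma$. Extend $f$ to $F:W\to N$ by composing with a deformation retraction $W\to M$, so that $F|_M=f$ and $F$ is itself a homotopy equivalence with $\tau(F)=\tau(f)$ under the natural identifications. The composition formula for Whitehead torsion applied to $M'\hookrightarrow W\xrightarrow{F} N$ then yields $\tau(F|_{M'})=\tau(f)-\sigma$ in $Wh(G)$. Therefore $\Psi(\tau(F|_{M'}))=\Psi(\tau(f))-\Psi(\sigma)=(\nu,0)-(\nu,0)=0$, and by exactness $\tau(F|_{M'})$ lies in the image of $Wh(G_1)\oplus Wh(G_2)$. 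The remaining hypotheses of Theorem~\ref{cappell1} (two-sided codimension one, injective and square root closed $\pi_1$, dimension at least $5$) are inherited from the present statement, so Theorem~\ref{cappell1} applied to $F|_{M'}:M'\to N$ produces the desired splitting.

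The main technical obstacle I anticipate is the bookkeeping in computing $\tau(F|_{M'})$ from $\tau(W,M)$: one must verify that Milnor's duality involution on $Wh(G)$ preserves the Nil summand of the Waldhausen decomposition (so that the sign convention produces genuine cancellation in $Nil\oplus\tilde K_0({\Bbb Z}[H])$ rather than a doubling), and that the various identifications via $f_*$ between Whitehead groups of $M$, $W$, and $N$ are consistent. These are standard manipulations in $s$-cobordism calculus, but a sloppy choice of signs would destroy the cancellation step, so they deserve careful attention.
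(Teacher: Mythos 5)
This statement is quoted from Cappell's paper \cite{Ca1} as part of the survey in Section 2; the paper gives no proof of it, so the only available comparison is with Cappell's original argument, which is the geometric ``nilpotent normal cobordism construction'' (handle exchanges along the splitting submanifold in the cut-open cover), not the insertion of an externally realized $h$-cobordism that you propose.

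Your argument has a genuine gap at the torsion bookkeeping step, and it is not merely a matter of sign conventions. If $(W;M,M')$ is an $h$-cobordism with $\tau(W,M)=\sigma$ and $F$ extends $f$ (so $F\simeq f\circ r$ for a deformation retraction $r:W\to M$), then Milnor's duality theorem gives $\tau(W,M')=(-1)^{n}\overline{\sigma}$, and the composition formula yields
$$\tau(F|_{M'})=\tau(f)-\sigma+(-1)^{n}\overline{\sigma},$$
not $\tau(f)-\sigma$. Hence the modifications of $\tau(f)$ achievable by inserting an $h$-cobordism are exactly the anti-self-dual elements $(-1)^{n}\overline{\sigma}-\sigma$, while $\tau(f)$ itself is self-dual, $\tau(f)=(-1)^{n}\overline{\tau(f)}$, being the torsion of a homotopy equivalence of closed $n$-manifolds. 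If, as you suggest, the involution respects the Waldhausen decomposition, then the $Nil$-component $\nu$ of $\tau(f)$ is self-dual and the $Nil$-component of any achievable modification is anti-self-dual; demanding that their sum vanish forces $2\nu=0$, which you have not established and which is not automatic (Waldhausen Nil groups need not be $2$-torsion). So the cancellation step fails as written. Repairing it needs a genuinely new input: either an algebraic argument that every relevant $Nil$ class of the form $\nu$ lies in the image of $\sigma\mapsto\sigma-(-1)^{n}\overline{\sigma}$ modulo the image of $Wh(G_1)\oplus Wh(G_2)$ (a Tate cohomology computation for the involution), or Cappell's route, in which the $h$-cobordism is produced by the splitting attempt itself and the class in $\tilde K_0({\Bbb Z}[\pi_1(K)])$ appears as a Wall finiteness obstruction whose vanishing, i.e. the hypothesis $\phi(\tau(f))=0$, is precisely what makes the resulting normal cobordism an $h$-cobordism. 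A secondary symptom of the same confusion is your remark that $\tau(F)=\tau(f)$; in fact $\tau(F)=\tau(f)-\sigma$ under the natural identifications.
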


Next, we relate the above results with the Mayer-Vietoris exact sequence for surgery 
groups. The reduced surgery groups $\tilde L_n^h({\Bbb Z}[G])$ are defined as the kernel 
of the homomorphism $L_n^h({\Bbb Z}[G])\to L_n^h({\Bbb Z}[\langle 1\rangle] )$ induced 
from the trivial map $G\to \langle 1 \rangle$. 

First, we deduce a consequence of Theorem \ref{cappell1}.

\begin{thm} (\cite{Ca1})\label{cappell3} Assume $M$, $N$ and $f$ are as above and 
$N$ is the connected sum of two manifolds $N_1$ and $N_2$. 
Assume the dimension of $M$ is $\geq 5$ and 
the fundamental groups of $N_1$ and $N_2$ have no elements of order $2$. Then 
$M$ is also a connected sum of two manifolds $M_1$ and $M_2$ and $f|_{M_i}:M_i\to N_i$ is a 
homotopy equivalence for $i=1,2$.\end{thm}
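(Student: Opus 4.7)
My plan is to apply Cappell's splitting theorem (Theorem \ref{cappell1}) to the canonical codimension-one sphere $K \cong S^{n-1}$ sitting inside the connected sum $N = N_1 \# N_2$, where $n = \dim N \geq 5$. The sphere $K$ is two-sided, and since $\dim K = n-1 \geq 4$ we have $\pi_1(K) = 1$, so the inclusion-induced map to $\pi_1(N)$ is tautologically injective. For the square-root-closed hypothesis, van Kampen gives $\pi_1(N) \cong \pi_1(N_1) * \pi_1(N_2)$, and by the normal form theorem (or Kurosh) every torsion element of a free product is conjugate into one of the factors; thus the assumption that neither $\pi_1(N_i)$ has an element of order $2$ implies $\pi_1(N)$ has none, i.e.\ the trivial subgroup $\pi_1(K) = 1$ is square-root closed in $\pi_1(N)$. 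So all three hypotheses of Theorem \ref{cappell1} are verified.

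The conclusion of that theorem requires that $\tau(f) \in Wh(\pi_1(N))$ lie in the image of the assembly map $Wh(\pi_1(N_1)) \oplus Wh(\pi_1(N_2)) \to Wh(\pi_1(N))$. To see this, I would apply Theorem \ref{gg} to the graph of groups with two vertices (with groups $\pi_1(N_i)$) and one edge (with trivial edge group), whose relevant portion reads
$$Wh(\pi_1(N_1)) \oplus Wh(\pi_1(N_2)) \to Wh(\pi_1(N)) \to Nil \oplus \tilde K_0(\mathbb{Z}).$$
Since the edge group is trivial and $\mathbb{Z}$ is regular noetherian (a fortiori regular coherent), the proposition immediately following Theorem \ref{gg} forces $Nil = 0$; together with the classical vanishing $\tilde K_0(\mathbb{Z}) = 0$, this shows the displayed assembly is surjective, so $\tau(f)$ lies in its image automatically.

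Theorem \ref{cappell1} then produces $g : M \to N$ homotopic to $f$ with $K' := g^{-1}(K)$ a codimension-one submanifold of $M$ and both $g|_{K'}$ and $g|_{M \setminus K'}$ homotopy equivalences. In particular $K'$ is a homotopy $(n-1)$-sphere of dimension $\geq 4$, so by the generalized Poincar\'e conjecture (Freedman in dimension $4$, the classical high-dimensional theorem in dimension $\geq 5$) one has $K' \cong S^{n-1}$. Since $K$ separates $N$, so does $K'$, splitting $M$ into two compact pieces $M_1', M_2'$ with sphere boundary and $g|_{M_i'}$ a homotopy equivalence onto $N_i \setminus \mathrm{int}(D^n)$; capping each off by a disk along $K'$ yields closed manifolds $M_1, M_2$ with $M = M_1 \# M_2$, and the extended maps $M_i \to N_i$ are homotopy equivalences by construction.

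The step I expect to be the true heart of the matter is the square-root-closed verification in the first paragraph: it is precisely the no-$2$-torsion hypothesis that rules out the $\mathrm{UNil}$ obstruction to splitting along a connect-sum sphere, and it cannot be dropped, since Cappell exhibited nontrivial $\mathrm{UNil}$ in the presence of order-two elements. The surjectivity in the second paragraph is clean thanks to the edge group being trivial, and the Poincar\'e input in the last step is only delicate in the dimension $n-1 = 4$ case, where it requires Freedman's theorem.
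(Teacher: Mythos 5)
Your proof is correct and follows exactly the route the paper intends: the paper states this result only as ``a consequence of Theorem \ref{cappell1}'' (citing Cappell) without writing out the deduction, and your argument---verifying the square-root-closed condition for the trivial group via the absence of $2$-torsion in the free product, obtaining the torsion condition automatically from the surjectivity of $Wh(\pi_1(N_1))\oplus Wh(\pi_1(N_2))\to Wh(\pi_1(N))$ (Theorem \ref{gg} with trivial edge group), and then capping off the split homotopy sphere---is precisely that deduction. No issues to report.
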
 

The above theorem is not true if the fundamental group has an element of order $2$. 
In \cite{Ca5} it was shown that there is a smooth $4k+1$ dimensional manifold $M$ ($k\geq 1$) and a 
simple homotopy equivalence $M\to {\Bbb {RP}}^{4k+1}\#{\Bbb {RP}}^{4k+1}$ so that 
$M$ is not a nontrivial connected sum in any of the categories: smooth, piecewise linear or 
smooth.

Now, we come to the more general existence of a Mayer-Vietoris exact 
sequence for surgery groups. Below we state it for the situation of 
amalgamated free product. A similar statement is also there for the $HNN$-extension case. 

\begin{thm} (\cite{Ca2}, \cite{Ca3})\label{cappell4} Let $H$ be a square root closed subgroup of two groups $G_1$ and $G_2$. 
Let $G=G_1*_HG_2$ and $A$ be the kernel of the map 
$\phi:Wh(G)\to \tilde K_0({\Bbb Z}[H])$ defined above. 

$(1)$ Then there is a 
long exact sequence of surgery groups.
$$\cdots L^h_n({\Bbb Z}[H])\to L_n^h({\Bbb Z}[G_1])\oplus L_n^h({\Bbb Z}[G_2])\to 
L^A_n({\Bbb Z}[G])\to$$$$\to^{\partial} L^h_{n-1}({\Bbb Z}[H])\to\cdots.$$
Here $L^A_n({\Bbb Z}[-])$ consists of $\sigma(f,b)$ of those normal maps $(f,b)$ for 
which the Whitehead torsions of $b$ lie in the 
subgroup $A$.

$(2)$ Let $B$ be the kernel of the map $$\tilde K_0({\Bbb Z}[H])\to \tilde K_0({\Bbb Z}[G_1])\oplus \tilde K_0({\Bbb Z}[G_2]).$$ 
Therefore, $B$ is isomorphic to $Wh(G)/A$. Then there is an exact sequence 

$$\cdots\to H^{n+1}({\Bbb Z}_2; B)\to L_n^A({\Bbb Z}[G])\to L_n^{Wh(G)}({\Bbb Z}[G])\to$$$$\to H^n({\Bbb Z}_2; B)\to\cdots$$

\noindent
where the ${\Bbb Z}_2$-action comes from the orientation character $\omega:G\to {\Bbb Z}_2$ of the group $G$, 
which we suppressed in the notation of the surgery groups.\end{thm}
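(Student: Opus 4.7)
The plan is to realize $G = G_1 *_H G_2$ geometrically and then feed the resulting splitting problem into Theorems \ref{cappell1} and \ref{cappell2}. Choose closed manifolds $N_1, N_2$ of dimension $\geq 6$, sharing a common codimension one two-sided submanifold $K$, with $\pi_1(N_i) = G_i$ and $\pi_1(K) = H$, and set $N = N_1 \cup_K N_2$; van Kampen gives $\pi_1(N) = G$. Since $H$ is square root closed in each $G_i$, it is square root closed in $G$ as well, so all the hypotheses of Theorems \ref{cappell1} and \ref{cappell2} hold along $K \subset N$.

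For Part $(1)$, I would first construct the three maps. The inclusion $G_i \hookrightarrow G$ gives $L^h_n(\mathbb{Z}[G_i]) \to L^h_n(\mathbb{Z}[G])$ in the usual functorial way, and I would show its image lies in $L^A_n$ by observing that a normal map with target $N_i$ has Whitehead torsion pulled back from $Wh(G_i)$, which is annihilated by $\phi$ by exactness in Theorem \ref{gg}. The boundary map $\partial$ is the geometric heart. Given a class $[f,b] \in L^A_n(\mathbb{Z}[G])$ represented by $f: M \to N$, the condition $\tau(b) \in A$ plus Theorem \ref{cappell2} produces an $h$-cobordism from $f$ to a splittable map $f': M' \to N$; restricting $f'$ together with its bundle data to $(f')^{-1}(K)$ yields a normal map over $K$ whose surgery obstruction in $L^h_{n-1}(\mathbb{Z}[H])$ defines $\partial[f,b]$. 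Independence of the chosen splitting follows by applying Theorem \ref{cappell1} to compare two candidate splittings of the same map.

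I expect the main obstacle to be exactness at $L^A_n(\mathbb{Z}[G])$. Suppose $\partial[f,b] = 0$; after splitting one may arrange that $(f,b)$ restricts to a null-cobordant normal map over $K$, and one must ask whether this restricted null-cobordism extends across $N_1$ and $N_2$ separately. The obstruction is precisely Cappell's unitary-nilpotent group $\mathrm{UNil}^h_n(\mathbb{Z}[H]; \mathbb{Z}[G_1], \mathbb{Z}[G_2])$, identified by a Mayer--Vietoris analysis of quadratic forms over the amalgam. The central technical input is the vanishing $\mathrm{UNil}^h_* = 0$ under the square root closed hypothesis, established by a doubling and cancellation argument on unitary nilpotent forms: the implication $g^2 \in H \Rightarrow g \in H$ furnishes exactly the square roots required to kill would-be $\mathrm{UNil}$ classes. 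Once this vanishing is in hand, exactness at $L^A_n$ follows, and exactness at the other terms is formal from functoriality and the geometric definition of $\partial$.

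For Part $(2)$, I would derive the sequence as a Rothenberg-type comparison between two decorations. For any $\mathbb{Z}_2$-invariant pair of subgroups $X \subset Y \subset Wh(G)$, with $\mathbb{Z}_2$ acting by duality twisted by $\omega$, there is a long exact sequence
$$\cdots \to H^{n+1}(\mathbb{Z}_2; Y/X) \to L^X_n(\mathbb{Z}[G]) \to L^Y_n(\mathbb{Z}[G]) \to H^n(\mathbb{Z}_2; Y/X) \to \cdots$$
of Tate cohomology type. Taking $X = A$ and $Y = Wh(G)$, and identifying $Wh(G)/A$ with $B$ via the $K$-theoretic Mayer--Vietoris sequence of Theorem \ref{gg} (where $A = \ker \phi$ and $B$ is the kernel of the next map in that sequence), we obtain the claimed exact sequence. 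The only remaining verification is that $A$ is $\mathbb{Z}_2$-invariant, which is immediate from the naturality of $\phi$ with respect to duality.
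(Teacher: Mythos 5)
Your proposal follows essentially the same route as the paper's own (very brief) sketch: realize the amalgam geometrically, take the non-boundary maps to be inclusion-induced, and define $\partial$ by splitting a normal map representing a class of $L_n^A({\Bbb Z}[G])$ along the codimension-one submanifold with fundamental group $H$ (using Theorems \ref{cappell1}--\ref{cappell2} together with the identification of $A$ with the image of $Wh(G_1)\oplus Wh(G_2)\to Wh(G)$ from Theorem \ref{gg}) and restricting to its preimage. The paper defers exactness and all of part $(2)$ to \cite{Ca4} and \cite{Sh}, whereas you correctly supply the missing ingredients --- vanishing of $\mathrm{UNil}$ under the square-root-closed hypothesis and a Rothenberg comparison of the decorations $A\subset Wh(G)$ --- so the two arguments agree in substance.
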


Here note that $L_n^{Wh(G)}({\Bbb Z}[-])=L_n^h({\Bbb Z}[-])$.

If we do not assume the square root closed condition then a new term appears 
called the $\mathrm{UNil}$-groups whose vanishing is needed for the existence of the 
above exact sequence. More explicitly, assume that $A=Wh(G)$, (which implies 
$L_n^A({\Bbb Z}[G])=L_n^h({\Bbb Z}[G])$) then 
there is a subgroup $\mathrm{UNil}_n^h({\Bbb Z}[H]; {\Bbb Z}[\hat{G_1}], {\Bbb Z}[\hat{G_2}])$ of 
$L_n^h({\Bbb Z}[G])$ so that the above Mayer-Vietoris exact sequence exists 
replacing $L_n^h({\Bbb Z}[G])$ by the quotient  
$$L_n^h({\Bbb Z}[G])/\mathrm{UNil}_n^h({\Bbb Z}[H]; {\Bbb Z}[\hat{G_1}], {\Bbb Z}[\hat{G_2}]).$$ 
Here ${\Bbb Z}[\hat{G_i}]$ is the $H$-subbimodule of ${\Bbb Z}[G_i]$ additively generated by 
$G_i-H$, for $i=1,2$.
The groups $\mathrm{UNil}_n^h({\Bbb Z}[H]; {\Bbb Z}[\hat{G_1}], {\Bbb Z}[\hat{G_2}])$ are $2$-primary and 
vanish if $G$ is square root closed. See \cite{Ca3}. The group $\mathrm{UNil}_n^h({\Bbb Z}[\langle 1\rangle]; 
{\Bbb Z}[\hat{{\Bbb Z}_2}], {\Bbb Z}[\hat{{\Bbb Z}_2]})$ for the infinite dihedral group 
${\Bbb Z}_2*{\Bbb Z}_2$ is related to the question of 
homotopy invariance of connected sum we discussed before in the context of 
projective spaces. The calculation of these $\mathrm{UNil}$ groups was only done recently in 
\cite{CD} and \cite{BR}. In \cite{Fa} it was shown that the $\mathrm{UNil}$ groups have 
exponent at the most $4$.
 
\begin{rem}{\rm We get from the above 
exact sequences that the Mayer-Vietoris type exact sequence for 
surgery groups always exists modulo $2$-torsions. Therefore, if we change the ring from ${\Bbb Z}$ to 
${\Bbb Z}[\frac{1}{2}]$ then the Mayer-Vietoris exact sequence always exists. Hence, 
using the Five Lemma, the Mayer Vietoris exact sequence above with coefficients in 
${\Bbb Z}[\frac{1}{2}]$ and the Mayer-Vietoris exact sequence for generalized homology theory 
we get that if a group $G$ acts on a tree so that for the vertex and edge stabilizers 
the assembly map in surgery theory (see Introduction) is an isomorphism then so it is 
for $G$. This was also stated in [\cite{BL}, Theorem 0.13]}\end{rem}

We know that there is a large class of (torsion free) groups for which the 
lower $K$-theory vanishes. See Section 3. Therefore, the square root closed condition is 
the only obstruction in any of these situations. We used this square root closed 
condition together with the result of \cite{W} for 
Haken $3$-manifold groups to deduce a vanishing theorem for structure sets (which is 
equivalent to saying that the assembly map is an isomorphism) for a 
large class of $3$-manifolds in \cite{R-1} and \cite{R-2}. Recently, we could deduce this 
vanishing theorem for all aspherical $3$-manifolds in \cite{R0} due to some stronger 
developments in this area.

The above theorem gives important calculations of surgery groups. 
We state the cases of free groups and surface groups. 

If $G_1$ and $G_2$ are two groups 
with no elements of order $2$ (for example if they are torsion free) then it follows from Theorem \ref{cappell4} that 
$$\tilde L_n^h({\Bbb Z}[G_1*G_2])=\tilde L_n^h({\Bbb Z}[G_1])\oplus\tilde L_n^h({\Bbb Z}[G_2])$$
and $$\tilde L^s_n({\Bbb Z}[G_1*G_2])=\tilde L^s_n({\Bbb Z}[G_1])\oplus\tilde L^s_n({\Bbb Z}[G_2]).$$ This gives the 
following calculation. 

\begin{cor} (\cite{Ca2}) Let $F_m$ be the free group on $m$ generators. Then 
$$L_n^h({\Bbb Z}[F_m])=\begin{cases}{\Bbb Z}&\text{if}\ n=4k,\\
{\Bbb Z}^m&\text{if}\ n=4k+1,\\ 
{\Bbb Z}_2&\text{if}\ n=4k+2,\\ 
{\Bbb Z}^m_2&\text{if}\ n=4k+3.\end{cases}$$
\end{cor}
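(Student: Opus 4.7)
The plan is to iterate the free-product splitting formula just stated---which is the direct consequence of Theorem \ref{cappell4} for torsion-free factors---to reduce to the case of $\Bbb Z$, and then to compute $L^h_n(\Bbb Z[\Bbb Z])$ via the HNN analogue of the same machinery.

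First, I would write $F_m \cong \Bbb Z * F_{m-1}$, with $F_0 = \langle 1\rangle$. Both factors are torsion-free, so the hypothesis of no element of order $2$ is met and the stated free-product formula applies. Induction on $m$ then yields
$$\tilde L^h_n(\Bbb Z[F_m]) \cong \bigoplus_{i=1}^m \tilde L^h_n(\Bbb Z[\Bbb Z]).$$

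Second, I would compute $L^h_n(\Bbb Z[\Bbb Z])$ by viewing $\Bbb Z$ as the HNN-extension $\langle 1\rangle *_{\langle 1\rangle}$ of the trivial group along its trivial subgroup. In the HNN form of Theorem \ref{cappell4}, Cappell's square-root-closed hypothesis is vacuous for the trivial subgroup, the $\mathrm{UNil}$ term vanishes, and the decoration obstructions are trivial. The resulting Wang-type long exact sequence collapses into the split Shaneson-type formula
$$L^h_n(\Bbb Z[\Bbb Z]) \cong L^h_n(\Bbb Z) \oplus L^h_{n-1}(\Bbb Z),$$
so in particular $\tilde L^h_n(\Bbb Z[\Bbb Z]) \cong L^h_{n-1}(\Bbb Z)$.

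Third, I would combine the two displays into
$$L^h_n(\Bbb Z[F_m]) \cong L^h_n(\Bbb Z) \oplus \bigoplus_{i=1}^m L^h_{n-1}(\Bbb Z)$$
and substitute the classical values $L^h_0(\Bbb Z) = \Bbb Z$, $L^h_1(\Bbb Z) = 0$, $L^h_2(\Bbb Z) = \Bbb Z_2$, $L^h_3(\Bbb Z) = 0$, extended by the $4$-periodicity recorded in Remark \ref{h=s}. The four congruence classes of $n$ modulo $4$ produce the values in the statement at once.

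The main technical point is verifying that all decoration obstructions and $\mathrm{UNil}$ terms really do vanish in both applications of Theorem \ref{cappell4}. This is clean here because the amalgamating or associated subgroup is trivial, so the square-root-closed hypothesis is automatic and the $\mathrm{UNil}$ groups vanish; moreover $Wh(F_m) = \tilde K_0(\Bbb Z[F_m]) = 0$ by Corollary \ref{white}, so the decoration subgroup $A$ in Theorem \ref{cappell4}(1) equals $Wh(F_m)$ and the group $B$ in part (2) is trivial. Once these decorations are cleared, both collapses are purely formal and the corollary follows.
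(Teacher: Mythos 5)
Your proposal is correct and follows essentially the same route as the paper: the text preceding the corollary derives $\tilde L_n^h({\Bbb Z}[G_1*G_2])\cong\tilde L_n^h({\Bbb Z}[G_1])\oplus\tilde L_n^h({\Bbb Z}[G_2])$ for groups without $2$-torsion from Theorem \ref{cappell4} and then asserts the calculation, which implicitly rests on iterating this splitting and on Shaneson's formula $L_n^h({\Bbb Z}[{\Bbb Z}])\cong L_n^h({\Bbb Z})\oplus L_{n-1}^h({\Bbb Z})$ together with the classical values of $L_*^h({\Bbb Z})$, exactly as you spell out. Your extra care with the decorations (using $Wh(F_m)=\tilde K_0({\Bbb Z}[F_m])=0$ from Corollary \ref{white}) and with the vanishing of the $\mathrm{UNil}$ terms simply makes explicit what the paper leaves to the citation of \cite{Ca2}.
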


Note that since the Whitehead group of the free groups vanishes (Corollary \ref{white}) the same 
calculation holds for the groups $L_n^s({\Bbb Z}[F_m])$. See Remark \ref{h=s}.

Since for the 
trivial group we already know the calculation of the surgery groups 
(restrict to $m=0$ in the free group case), let $S$ be a 
non-simply connected closed surface.

\begin{cor} (\cite{Ca2}) Let $G=\pi_1(S)$. If $S$ is orientable of genus $g\geq 1$ then 

$$L_n^h({\Bbb Z}[G])=\begin{cases}{\Bbb Z}\oplus {\Bbb Z}_2 &\text{if}\ n=4k,\\ 
{\Bbb Z}^{2g}&\text{if}\ n=4k+1,\\
{\Bbb Z}\oplus {\Bbb Z}_2&\text{if}\ n=4k+2,\\ 
{\Bbb Z}_2^{2g}&\text{if}\ n=4k+3.\end{cases}$$

If $G={\Bbb Z}_2$ then the respective calculation is the following  
$$L_n^h({\Bbb Z}[G])=\begin{cases}{\Bbb Z}_2&\text{if}\ n=4k,\\
0&\text{if}\ n=4k+1,\\ 
{\Bbb Z}_2&\text{if}\ n=4k+2,\\ 
0&\text{if}\ n=4k+3.\end{cases}$$

Finally, if $S$ is the connected sum of the Klein bottle and the orientable surface of genus  $g$ then the 
corresponding calculation takes the following form:
$$L_n^h({\Bbb Z}[G])=\begin{cases}{\Bbb Z}_2\oplus {\Bbb Z}_2&\text{if}\ n=4k,\\
{\Bbb Z}^{2g+1}&\text{if}\ n=4k+1,\\
{\Bbb Z}\oplus {\Bbb Z}_2&\text{if}\ n=4k+2,\\ 
{\Bbb Z}_2^{2g+2}&\text{if}\ n=4k+3.\end{cases}$$
\end{cor}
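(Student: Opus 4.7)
The plan is to apply Theorem \ref{cappell4} to a graph-of-groups decomposition of $G=\pi_1(S)$ and feed in the free-group calculation from the preceding corollary. In every case the vertex, edge, and ambient groups are listed in Corollary \ref{white}, so all $Wh$ and $\tilde K_0$ vanish; this kills the $A$-subscripts and the $H^*(\mathbb Z_2;B)$ correction terms in Theorem \ref{cappell4}, leaving a clean Mayer--Vietoris sequence in $L^h$.

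\textbf{Orientable case, $g\ge 1$.} Split $S$ along a non-separating simple closed curve $\gamma$ to present $G$ as the HNN-extension $F_{2g-1}*_{\mathbb Z}$ (when $g=1$ this degenerates to $\mathbb Z*_{\mathbb Z}\cong\mathbb Z^2$). The amalgamating element is the boundary word of a punctured surface, hence a product of commutators, hence a primitive element of the free group; since the centralizer of a primitive element of a free group is itself, the cyclic amalgam is square root closed. Theorem \ref{cappell4} then reads
$$\cdots\to L_n^h(\mathbb Z[\mathbb Z])\to L_n^h(\mathbb Z[F_{2g-1}])\to L_n^h(\mathbb Z[G])\to L_{n-1}^h(\mathbb Z[\mathbb Z])\to\cdots.$$
Inserting $L_n^h(\mathbb Z[\mathbb Z])=L_n^h(\mathbb Z)\oplus L_{n-1}^h(\mathbb Z)$ (Shaneson's formula, itself an instance of the same theorem) and $L_n^h(\mathbb Z[F_{2g-1}])$ from the previous corollary, the sequence splits into short exact sequences in each residue of $n$ mod $4$; rank and $2$-torsion bookkeeping yields the four stated answers.

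\textbf{Projective-plane case, $G=\mathbb Z_2$.} Here no non-trivial splitting exists, so Theorem \ref{cappell4} is not used. One quotes directly Wall's classical computation of $L^h$ of the cyclic group of order $2$ equipped with the non-trivial orientation character of $\mathbb{RP}^2$.

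\textbf{Non-orientable case, $S=K\#\,S_g$.} Choose a two-sided separating simple closed curve that cuts off the Klein bottle; the two vertex groups are $F_2$ (Klein bottle minus a disk, boundary word $a^2b^2$) and $F_{2g}$ (orientable surface of genus $g$ minus a disk, boundary word a product of $g$ commutators). An abelianization argument shows $a^2b^2$ is primitive in $F_2$: any proper root would have abelianization image $(1,1)\in\mathbb Z^2$, and no such element of $F_2$ squares to $a^2b^2$. Hence the amalgamating $\mathbb Z$ is square root closed in both vertex groups, and Theorem \ref{cappell4} again produces a clean Mayer--Vietoris sequence in $L^h$. The main obstacle lies here: the orientation character $\omega:G\to\mathbb Z_2$ is non-trivial on the $F_2$ vertex group (though trivial on $F_{2g}$ and on the edge $\mathbb Z$), so the $L$-theory of the $F_2$ factor must be computed with the twisted involution on $\mathbb Z[F_2]$, and the resulting extension problem in the Mayer--Vietoris sequence has to be resolved consistently with this twist. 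Once this is done, the tabulated values of $L_n^h(\mathbb Z[G])$ fall out.
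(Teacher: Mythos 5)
First, a point of reference: the paper itself offers no proof of this corollary --- it is quoted verbatim as a survey item from \cite{Ca2}, immediately after the free-group calculation and the remark that $\tilde L^h_*$ of a free product of groups without $2$-torsion splits. So there is no in-paper argument to compare against; your proposal has to stand on its own, and your overall strategy (decompose $\pi_1(S)$ along a curve, check the square-root-closed hypothesis, invoke Theorem \ref{cappell4} with the $K$-theory vanishing of Corollary \ref{white} to get a clean Mayer--Vietoris sequence, and feed in the free-group answer) is indeed the strategy behind Cappell's computation. But as written it is a plan, not a proof, and it stops exactly where the content of the corollary lies.

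The concrete gaps are these. (i) In the orientable case the long exact sequence does break up, but only after you verify that the map $L_n^h({\Bbb Z}[{\Bbb Z}])\to L_n^h({\Bbb Z}[F_{2g-1}])$ (the difference of the two boundary inclusions, for the HNN presentation) vanishes; this needs the identification $L_n^h({\Bbb Z}[F_m])\simeq L_n^h({\Bbb Z})\oplus L_{n-1}^h({\Bbb Z})^m$ together with the fact that the two boundary words are homologous with the correct orientation conventions --- with the wrong sign one gets multiplication by $2$ instead of $0$ and a different answer. More seriously, the resulting short exact sequences in degrees $n\equiv 0$ and $n\equiv 3 \pmod 4$ are $0\to{\Bbb Z}\to L_0^h\to{\Bbb Z}_2\to 0$ and $0\to{\Bbb Z}_2^{2g-1}\to L_3^h\to{\Bbb Z}_2\to 0$, and ``rank and $2$-torsion bookkeeping'' cannot distinguish ${\Bbb Z}\oplus{\Bbb Z}_2$ from ${\Bbb Z}$, nor ${\Bbb Z}_2^{2g}$ from ${\Bbb Z}_4\oplus{\Bbb Z}_2^{2g-2}$; resolving these extensions is a genuine step (Cappell uses product formulae/naturality of the symmetric signature and Arf invariant; equivalently one can split off $H_0(S)\otimes L_n({\Bbb Z})$ and $H_2(S)\otimes L_{n-2}({\Bbb Z})$ via the assembly map). (ii) In the non-orientable case you correctly isolate the hard point --- the $L$-theory of the $F_2$ vertex with the non-trivial orientation character and the ensuing extension problems --- and then declare that ``the tabulated values fall out.'' That computation (the twisted analogue of the Shaneson splitting for $\langle a,b\rangle$ with $\omega(a)=\omega(b)=-1$, feeding Wall's $L_*({\Bbb Z}_2^-)$ into the answer) is the whole point of the third table and is not carried out. (iii) Minor but worth fixing: the boundary word $\prod[a_i,b_i]$ is null-homologous, hence emphatically \emph{not} primitive in $F_{2g-1}$; what you need, and what actually implies square-root-closedness of a cyclic subgroup of a free group, is that the word is not a proper power (so that its centralizer is the cyclic group it generates). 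Likewise your abelianization argument only shows that a square root of $a^2b^2$ would have abelianized image $(1,1)$; the non-existence of such a root still has to be proved.
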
 

Again since the Whitehead group of surface groups vanishes (Corollary \ref{white}) we have $L_n^h({\Bbb Z}[G])=L_n^s({\Bbb Z}[G])$ 
where $G$ is a surface group as above.

\begin{rem}{\rm 
In this connection, Cappell conjectured that for any knot group $G=\pi_1({\Bbb S}^3-k)$, where $k$ is a knot 
in the $3$-sphere ${\Bbb S}^3$, the abelianization 
homomorphism $G\to {\Bbb Z}$ induces an isomorphism on the surgery groups. That is,  
$L_n({\Bbb Z}[G])\to L_n({\Bbb Z}[{\Bbb Z}])$ is an isomorphism for all $n$. 
This was known as {\it Cappell's conjecture} and he 
proved it in the case where the commutator subgroup of the knot group is finitely 
generated. We proved this conjecture for all knot groups in \cite{afr}. A similar 
computation was give in \cite{R-3} for link groups. Recently,  
an explicit computation of the surgery groups of the classical Artin pure braid groups 
(denoted by $PB_m$) was given in \cite{R7}. 

\begin{thm} (\cite{R7}) If $PB_m$ is the classical Artin pure braid group on $m$ strands, then 
we have the following.

$$
L^h_n(PB_m) =\begin{cases}{\Bbb Z} & \text{if}\ n=4k,\\
{\Bbb Z}^{\frac{m(m+1)}{2}} & \text{if}\ n=4k+1,\\
{\Bbb Z}_2 & \text{if}\ n=4k+2,\\
{\Bbb Z}_2^{\frac{m(m+1)}{2}} & \text{if}\ n=4k+3.
\end{cases}$$
\end{thm}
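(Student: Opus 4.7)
The plan is to proceed by induction on $m$, exploiting the Fadell--Neuwirth split short exact sequence
$$1 \to F_{m-1} \to PB_m \to PB_{m-1} \to 1,$$
which realizes $PB_m$ as an iterated semidirect product of free groups starting from $PB_1=\{e\}$. In particular each $PB_m$ is torsion free, and iterated application of Corollary \ref{white} to the Bass--Serre tree at each stage of the tower gives the vanishing of $Wh(PB_m)$, $\tilde K_0({\Bbb Z}[PB_m])$ and all negative $K$-groups. By Remark \ref{h=s} the decoration on the surgery groups is therefore irrelevant, so $L^h_n({\Bbb Z}[PB_m]) \cong L^{\langle-\infty\rangle}_n({\Bbb Z}[PB_m])$ throughout.

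The central step is to establish the fibered isomorphism conjecture in $L$-theory for $PB_m$. The FIC is inherited through an extension once it is known for the quotient and for the preimages of all virtually cyclic subgroups in the total group. Inductively $PB_{m-1}$ satisfies FIC, while the relevant preimages are extensions of virtually cyclic groups by the free group $F_{m-1}$, a case covered by the results on groups acting on trees surveyed earlier. Granting FIC, the assembly map yields
$$L^h_n({\Bbb Z}[PB_m]) \;\cong\; H_n(BPB_m; {\Bbb L}),$$
where $BPB_m$ may be modelled by the ordered configuration space $F({\Bbb C},m)$. By Arnold's classical theorem the integral cohomology $H^*(F({\Bbb C},m);{\Bbb Z})$ is a torsion-free graded ring with Poincar\'e polynomial $\prod_{k=1}^{m-1}(1+kt)$.

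The final step is to feed this into the Atiyah--Hirzebruch spectral sequence
$$E^2_{p,q}=H_p(BPB_m;\pi_q({\Bbb L}))\Longrightarrow L^h_{p+q}({\Bbb Z}[PB_m])$$
with $4$-periodic coefficients $\pi_*({\Bbb L})=({\Bbb Z},0,{\Bbb Z}_2,0)$. Torsion-freeness of $H_*(F({\Bbb C},m);{\Bbb Z})$ makes the $E^2$-page transparent via universal coefficients. The main obstacle is controlling the higher differentials between the ${\Bbb Z}$- and ${\Bbb Z}_2$-strata and the resulting extensions in each total degree. The cleanest route is to argue iteratively along the Fadell--Neuwirth tower: at each stage the free-kernel structure permits a Shaneson-style splitting of the $L$-theory, available because the lower $K$-theory vanishes and torsion-freeness makes the square-root-closed hypothesis of Theorem \ref{cappell4} vacuous, so the associated Mayer--Vietoris sequence for surgery groups splits. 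Reading off the $E^\infty$-contributions in the four residue classes modulo $4$ then yields the four cases of the theorem, with the stated ranks emerging as explicit sums of Betti numbers of $F({\Bbb C},m)$.
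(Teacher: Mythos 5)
First, a point of reference: the paper does not actually prove this statement --- it is quoted from \cite{R7} inside a remark, with only the side comment that the lower $K$-theory vanishing is from \cite{afr0} --- so your proposal can only be judged on its own terms. Your overall strategy (kill the decorations by the vanishing of $Wh$, $\tilde K_0({\Bbb Z}[-])$ and the negative $K$-groups; establish the fibered isomorphism conjecture in $L$-theory along the Fadell--Neuwirth tower; identify $L^h_n({\Bbb Z}[PB_m])$ with $H_n(F({\Bbb C},m);{\Bbb L})$; compute the latter by the Atiyah--Hirzebruch spectral sequence) is the standard and essentially correct route, and is in the spirit of \cite{R7}.

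The genuine gap is at the decisive last step: you never perform the computation, and when one does, it does not yield the displayed formula. With $H_*(F({\Bbb C},m);{\Bbb Z})$ torsion free the spectral sequence degenerates and your method gives $L^h_n\cong\bigoplus_{j\equiv n\,(4)}H_j(F({\Bbb C},m);{\Bbb Z})\oplus\bigoplus_{j\equiv n-2\,(4)}H_j(F({\Bbb C},m);{\Bbb Z}_2)$. In degree $n\equiv 1$ the free rank is then the sum of the Betti numbers $b_j$ over $j\equiv 1 \pmod 4$, which is $m(m-1)/2+b_5+\cdots$ and not $m(m+1)/2$; already for $m=2$ one has $PB_2\cong{\Bbb Z}$ and $L^h_1({\Bbb Z}[{\Bbb Z}])={\Bbb Z}$, not ${\Bbb Z}^3$, and for $m=3$ the Shaneson splitting applied to $PB_3\cong F_2\times{\Bbb Z}$ gives $L^h_0={\Bbb Z}\oplus{\Bbb Z}_2^2$ and $L^h_1={\Bbb Z}^3$, against the ${\Bbb Z}$ and ${\Bbb Z}^6$ of the statement. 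So either the quoted statement rests on a convention you have not matched, or your final ``reading off'' is asserted where it must be checked; as written the argument ends in a claim numerically incompatible with your own method, and you must reconcile the two. Two smaller but real errors: Corollary \ref{white} is a list that does not contain $PB_m$, so the $K$-theoretic vanishing needs Waldhausen's closure properties (regular coherence propagated along the tower) or the result of \cite{afr0}; and torsion-freeness does not make the square-root-closed hypothesis of Theorem \ref{cappell4} vacuous (already $2{\Bbb Z}\subset{\Bbb Z}$ is not square root closed) --- the $\mathrm{UNil}$ vanishing you need for Laurent extensions comes from Ranicki's splitting theorem instead, and in any case the stage $1\to F_{m-1}\to PB_m\to PB_{m-1}\to 1$ has free kernel and non-cyclic quotient, so it is not a Laurent extension and admits no ``Shaneson-style splitting'' in the form you invoke.
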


The vanishing of the lower $K$-theory of $PB_m$ was proved in 
\cite{afr0}. Therefore, the same calculation holds for the surgery groups for 
simple homotopy equivalence.}\end{rem}

Theorem \ref{cappell4} is proved by a more general (relative) version of Theorem \ref{cappell1}.
The crucial point is to describe the boundary map $\partial$. The other 
maps are defined by the inclusion maps with proper sign. 
First, note that by Theorem \ref{gg} the image of $Wh(G_1)\oplus Wh(G_2)\to 
Wh(G)$ is the same as the kernel $A$ of $Wh(G)\to \tilde K_0({\Bbb Z}[H])$. Now, given 
$\alpha\in L_n^A({\Bbb Z}[G])$ one represents this by a normal map $(f, b)$, 
which satisfies the statement of the splitting theorem and one gets $f$ to be 
transverse to the splitting submanifold (whose fundamental group is $H$) 
and then one restricts the normal map to the inverse of this splitting 
submanifold to get a normal map giving an element in $L^h_{n-1}({\Bbb Z}[H])$. The exactness 
of the sequence is again proved by some geometric methods. For details we 
refer the reader to \cite{Ca4} and \cite{Sh}.

%\newpage
\section{The isomorphism conjecture for groups acting on trees}

In this section we study the fibered isomorphism conjecture 
of Farrell and Jones for groups acting on trees. Originally, the
conjecture was stated for pseudo-isotopy theory, algebraic $K$-theory
and for $L$-theory (\cite{FJ}). One can deduce a lower algebraic $K$-theory version 
of the conjecture (that is in dimension $\leq 1$) from the pseudo-isotopy version of the 
conjecture.
Here we prove some results for the pseudo-isotopy theory, $L$-theory and 
in the lower $K$-theory. The methods of proofs of our results hold for
all equivariant homology theories under certain conditions. We make these
conditions explicit here. 

It is well known that the pseudo-isotopy 
version of the conjecture yields the following:

$\bullet$ Computation of the Whitehead group and the lower $K$-groups of 
the associated groups.

$\bullet$ It implies the conjecture in 
lower $K$-theory (that is in dimension $\leq 1$). In particular, it proves 
that the Whitehead group, the reduced projective class group and the 
lower $K$-groups in dimension $\leq -2$ 
vanish for torsion free groups.

$\bullet$ Computation of the algebraic $K$-groups of the groups in all dimensions 
after tensoring with the rationals in terms of the ordinary rational homology groups. See 
\cite{FJ}.

$$K_n({\Bbb Z}[G])\otimes_{\Bbb Z}{\Bbb Q})\simeq H_n(BG, {\Bbb Q})\oplus\bigoplus_{k=1}^{\infty}H_{n-4k-1)}(BG, {\Bbb Q}).$$

$\bullet$ Together with the conjecture in $L$-theory this also gives the following computations  
of the homotopy groups of the space of homeomorphisms (diffeomorphisms) for 
closed aspherical manifolds $M$ of dimension $m\geq 11$ and for $1\leq i\leq \frac{m-7}{3}$. See 
\cite{FH}.

$$
\pi_i(Top(M))\otimes_{\Bbb Z}{\Bbb Q}=\begin{cases} \text{center}(\pi_1(M))\otimes_{\Bbb Z}{\Bbb Q} & \text{for}\ i=1\\ 
0 & \text{for}\ i\geq 2.\end{cases}$$ 

Also under the same hypothesis one has the following.

$$\pi_i(Diff(M))\otimes_{\Bbb Z}{\Bbb Q} =\begin{cases} \text{center}(\pi_1(M))\otimes_{\Bbb Z}{\Bbb Q}& \text{for}\ i=1\\
\bigoplus_{j=1}^{\infty}H_{(i+1)-4j}(M;{\Bbb Q})& \text{for}\ i\geq 2\ \text{and}\ m\ \text{odd}\\
0& \text{for}\ i\geq 2\ \text{and}\ m\ \text{even}.\end{cases}$$

The main problem we are concerned with is the following.

\noindent
{\bf Problem.} {\it Assume the fibered isomorphism  conjecture for the
stabilizers of the action of a group on a tree. Show that the group also
satisfies the conjecture.}

In [\cite{R1}, Reduction Theorem] we solved the problem when the 
edge stabilizers are trivial.
In this article we show it when the edge stabilizers are finite, and, in
addition, either the vertex stabilizers are residually finite or 
the group surjects onto another group with certain properties. 
We also solve the problem, under a certain condition, assuming that 
the vertex stabilizers are polycyclic. This condition is both 
necessary and sufficient for the fundamental group to be subgroup 
separable when the vertex stabilizers are finitely generated 
nilpotent. Next, we prove some results when the stabilizers are  
abelian groups. Graphs of groups of the last type were studied before 
in \cite{S} where the 
$L$-theory was computed when the stabilizers are free abelian
groups. 

A positive answer to the problem implies that the fibered 
isomorphism conjecture is true for one-relator groups and for solvable 
groups. See Subsection 3.8 for details.

\subsection{Statements of theorems}

Now, we state our main results. Before that we recall some definitions.
Given two groups $G$ and $H$ the {\it wreath product} 
$G\wr H$ is by definition the semidirect product $G^H\rtimes H$ where the 
action of $H$ on $G^H$ is the regular action and $G^H$ is the 
direct sum of copies of $G$ indexed by the elements of $H$.

In the statements of the results we say that the FIC$^P$ 
(FICwF$^P$) is true for a group 
$G$ if the Farrell-Jones fibered isomorphism  conjecture for pseudo-isotopy 
theory is true for the group $G$ (for $G\wr H$ for any finite group $H$).

The importance of the version FICwF$^P$ ({\it fibered isomorphism 
conjecture wreath product with finite groups}) was first realized in the proof of the 
conjecture for braid groups in \cite{FR}. Later it was defined in 
\cite{R1} to facilitate proving the conjecture 
for a $3$-manifold group from the knowledge of the conjecture for some 
finite sheeted covering. The main advantage in this  
version of the conjecture is that it passes to finite index 
overgroups (Proposition \ref{product}). 

\begin{thm} \label{residually} Let $\cal G$ be a graph of groups such that
the edge groups are finite. Then $\pi_1({\cal G})$ satisfies the FICwF$^P$  
if one of the following conditions is satisfied.

(1). The vertex groups are residually finite and satisfy the FICwF$^P$. 

(2). There is a homomorphism                  
$f:\pi_1({\cal G})\to Q$ onto another group $Q$ such that the restriction
of $f$ to any vertex group has finite kernel and $Q$ 
satisfies the FICwF$^P$. 
\end{thm}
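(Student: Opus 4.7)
The core strategy for both parts is to reduce to a situation in which the edge stabilizers become trivial, invoke the Reduction Theorem of \cite{R1} to obtain FICwF$^P$ there, and then push the conjecture upward using the fact that FICwF$^P$ is inherited by finite-index overgroups (Proposition \ref{product}).

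For part (1), I would first establish that $G := \pi_1(\mathcal{G})$ is itself residually finite. For a finite graph, this is a standard Bass-Serre fact: generalized free products of residually finite groups amalgamated over finite subgroups are residually finite, and the $HNN$ analogue is similar; for an infinite $\mathcal{G}$, one writes $G$ as the directed union of the fundamental groups of its finite subgraphs. Given residual finiteness of $G$ and finiteness of every edge group $G_e$, one finds for each $G_e$ a finite-index normal subgroup $N_e \triangleleft G$ with $N_e \cap G_e = \{1\}$; intersecting finitely many such $N_e$ (in the finite-graph case) yields a finite-index normal $N \triangleleft G$ whose intersection with every edge stabilizer of the $G$-action on the Bass-Serre tree $T$ is trivial. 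Thus $N$ acts on $T$ with trivial edge stabilizers, and its vertex stabilizers are subgroups of conjugates of the $G_v$, which inherit FICwF$^P$ by the subgroup property. The Reduction Theorem supplies FICwF$^P$ for $N$, and Proposition \ref{product} raises this to $G$. For infinite $\mathcal{G}$, a continuity argument for FICwF$^P$ along directed unions closes the argument.

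For part (2), the key observation is that $K := \ker f$ intersects every vertex group $G_v$ in a finite subgroup, hence acts on $T$ with finite vertex (and therefore finite edge) stabilizers. By the Karrass-Pietrowski-Solitar theorem (in the form extended by Bass), such a group is virtually free. Virtually free groups satisfy FICwF$^P$: a free finite-index subgroup inherits it from the Reduction Theorem (writing a free group as a free product of trivial groups), and Proposition \ref{product} passes it upward. It remains to deduce FICwF$^P$ for $G$ from FICwF$^P$ for $K$ and for $Q$. I would apply the fibered-inheritance principle: it is enough to verify FICwF$^P$ for each preimage $f^{-1}(C)$ of a virtually cyclic $C \le Q$. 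Each such $f^{-1}(C)$ fits into an extension $1 \to K \to f^{-1}(C) \to C \to 1$ and acts on $T$ with virtually cyclic stabilizers, a controlled structure that permits a direct verification; the fibered conjecture for $Q$ then glues these pieces together to give FICwF$^P$ for $G$.

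The main obstacle I anticipate in (1) is the infinite-graph case: no single finite-index subgroup can simultaneously trivialize the intersections with infinitely many conjugacy classes of finite edge stabilizers, so a careful direct-limit argument is required, and one must check that the wreath-product decoration in FICwF$^P$ is preserved under the limit. In (2), the hard part will be the closing inheritance step: converting FICwF$^P$ for $K$ and $Q$ into FICwF$^P$ for $G$ is not a formal consequence of the extension but requires a detailed analysis of the $T$-action restricted to preimages of virtually cyclic subgroups of $Q$, together with a careful invocation of the fibered form of the conjecture.
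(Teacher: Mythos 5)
Your proposal matches the paper's argument almost step for step: part (1) is exactly Proposition \ref{residually-prop}$(1)$ (residual finiteness of $\pi_1({\cal G})$ via Lemma \ref{graph-res}, a finite-index normal subgroup meeting the finitely many finite edge groups trivially, the free-product/Reduction Theorem step for that subgroup, passage to the finite-index overgroup by Proposition \ref{product}$(2)$, and continuity for infinite graphs), and part (2) is Proposition \ref{theorem}$(b)$, reducing via the fibered property (Proposition \ref{product}$(3)$) to the preimages $f^{-1}(C)$ of virtually cyclic subgroups of $Q$, which act on the Bass--Serre tree with virtually cyclic vertex and finite edge stabilizers. The only loose end is that your ``direct verification'' for these preimages is precisely the paper's Corollary \ref{tt} ($_{wf}{\cal T}^{P}$), which in this finite-edge-group situation follows by rerunning your part (1) argument with virtually cyclic (hence residually finite) vertex groups; the detour through $K=\ker f$ being virtually free is not needed and is slightly off for infinitely generated $K$, where Karrass--Pietrowski--Solitar does not apply.
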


This theorem is a special case of Proposition \ref{residually-prop} $(1)$ 
respectively Proposition \ref{theorem} $(b)$. See Subsection 3.7. 

We state now the following immediate corollary.

\begin{cor}\label{onend} Assume that the FICwF$^P$ is true for all 
one-ended, finitely presented residually finite groups. Then 
the FICwF$^P$ is true for all residually finite groups.\end{cor}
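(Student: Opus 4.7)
The plan is to combine Theorem \ref{residually}(1)---which propagates FICwF$^P$ through a graph of groups with finite edge groups and residually finite vertex groups---with a colimit reduction to finite generation and the Stallings--Dunwoody machinery for decomposing a group over its finite subgroups.

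First I would reduce to the finitely generated case. The equivariant homology theories defining the assembly map for FICwF$^P$ commute with filtered colimits in the group variable, so the class of groups satisfying FICwF$^P$ is closed under directed colimits. Every group is the directed union of its finitely generated subgroups, and a subgroup of a residually finite group is residually finite; hence it suffices to verify the conjecture for finitely generated residually finite groups. The analogous reduction from finitely generated to finitely presented residually finite is more subtle, because the naive approximation $F/N = \varinjlim F/\langle\langle r_1,\ldots,r_n\rangle\rangle$ yields finitely presented quotients that need not be residually finite; one has to choose the approximating groups inside the class of residually finite groups, exhibiting the finitely generated residually finite group at hand as a filtered colimit whose constituents are finitely presented and residually finite.

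Once we are reduced to a finitely presented residually finite group $G$, I would induct on the end invariant. If $G$ is finite or one-ended, FICwF$^P$ holds trivially or by the standing hypothesis. Otherwise Stallings' theorem furnishes a nontrivial splitting of $G$ as an amalgamated free product or HNN extension over a finite subgroup; the vertex groups, being subgroups of $G$ and glued along a finite edge group, are again finitely presented (by the classical finite-presentation-over-finite-edge-groups lemma) and residually finite. By Dunwoody's accessibility theorem for finitely presented groups, iterating this procedure terminates in a finite graph of groups $\cal G$ with finite edge groups whose vertex groups are each either finite or one-ended finitely presented residually finite, and so satisfy FICwF$^P$. Theorem \ref{residually}(1) then delivers FICwF$^P$ for $\pi_1(\cal G) = G$, closing the induction.

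The step I expect to be the main obstacle is the passage from finitely generated residually finite to finitely presented residually finite: this is not a formal consequence of closure under directed colimits, since finitely many relators do not in general preserve residual finiteness of the quotient, and Dunwoody's accessibility (which drives the termination in the last step) genuinely needs finite presentation---inaccessible finitely generated groups exist. I would therefore expect the paper's proof to either exhibit an explicit filtered colimit of finitely presented residually finite groups with the given finitely generated residually finite group as colimit, or to apply an accessibility-style splitting argument valid directly at the finitely generated residually finite level, so that the reduction to the one-ended finitely presented case can be carried out in a single sweep.
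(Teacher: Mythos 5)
Your proposal follows the paper's argument essentially verbatim: reduction to finitely generated subgroups via continuity of the equivariant homology theory ([\cite{FL}, Theorem 7.1]), Dunwoody's accessibility theorem (\cite{DY}) to write a finitely presented residually finite group as the fundamental group of a finite graph of groups with finite edge groups whose vertex groups are finite or one-ended, finitely presented and (being subgroups) residually finite, and then Theorem \ref{residually}$(1)$ to assemble the conclusion. The step you single out as the main obstacle --- passing from finitely generated residually finite groups to finitely presented ones --- is exactly the step the paper does not elaborate: it simply asserts that ``by [\cite{FL}, Theorem 7.1] it is enough to consider finitely presented residually finite groups,'' even though that theorem is the continuity statement for increasing unions of subgroups and on its face only reduces the problem to finitely generated subgroups, for the reasons you give (imposing finitely many relators need not preserve residual finiteness, and accessibility genuinely requires finite presentation). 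So your diagnosis of where the difficulty sits is accurate; neither your write-up nor the paper supplies further detail there, and in every other respect your outline coincides with the paper's proof.
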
 

Let $G$ be a residually finite group. Then by 
[\cite{FL}, Theorem 7.1] it is 
enough to consider finitely presented residually finite group. Since 
finitely presented groups are accessible (\cite{DY}), by the above Corollary 
we need to show that the 
conjecture is true for one-ended finitely presented residually finite groups 
to prove the conjecture for all residually finite groups. 

\begin{rem} {\rm There is a large class of residually
finite groups for which the FICwF$^P$  
is true. For example, any group which contains a
group from the following examples as a subgroup of finite index is a
residually finite group satisfying the FICwF$^P$.

1. Polycyclic groups (\cite{FJ}).

2. Artin full braid groups (\cite{FR}, \cite{R3}).

3. Compact $3$-manifold groups (\cite{R1}, \cite{R2}).

4. Compact surface groups (\cite{FJ}).

5. Fundamental groups of hyperbolic Riemannian manifolds (\cite{FJ}).

6. Crystallographic groups (\cite{FJ}).}\end{rem}

\begin{defn}\label{def1.1}
{\rm A graph of groups $\cal G$ is said
to satisfy the {\it intersection property} if for each 
connected subgraph of groups
${\cal G}'$ of $\cal G$, $\cap_{e\in E_{{\cal G}'}}{\cal G}'_e$
contains a subgroup which is normal in
 $\pi_1({\cal G}')$ and is of finite index in some edge group. We say  
that $\cal G$ is of {\it finite-type} if the graph is finite and 
all the vertex groups are finite.}\end{defn}

\begin{thm} \label{ip} Let $\cal G$ be a graph of groups. 
Let $\cal D$ be a collection of finitely generated
groups satisfying the following.

\begin{itemize}
\item Any element $C\in {\cal D}$ has the following properties. 
Quotients and subgroups of $C$ belong to ${\cal D}$. $C$ is 
residually finite and the FICwF$^P$ is true for
the mapping torus $C\rtimes \l t\r$ for any action of the
infinite cyclic group $\l t\r$ on $C$.
\end{itemize} 

Assume that the vertex groups of $\cal G$ belong to $\cal D$.
Then the FICwF$^P$ is true for $\pi_1({\cal G})$ if ${\cal G}$ satisfies the
intersection property.
\end{thm}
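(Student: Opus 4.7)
The plan is to induct on the number of edges of $\cal G$ whose edge group is infinite, with Theorem \ref{residually}(1) supplying the base case and the inductive step obtained by quotienting by a normal subgroup extracted from the intersection property. Two preliminary observations: since $\cal D$ is subgroup-closed and each edge group is a subgroup of a vertex group, every edge group lies in $\cal D$; and taking the trivial action of $\l t\r$ in the defining hypothesis of $\cal D$ gives FICwF$^P$ for $C \times \l t\r$ and hence, by inheritance of the fibered conjecture to subgroups, for $C$ itself, for every $C \in \cal D$. Consequently, when every edge group of $\cal G$ is finite, the vertex groups (in $\cal D$) are residually finite and satisfy FICwF$^P$, so Theorem \ref{residually}(1) applies and gives the base case.

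For the inductive step, apply the intersection property to $\cal G$ to obtain $N \trianglelefteq G := \pi_1(\cal G)$ with $N \subseteq \bigcap_e G_e$ and $[G_{e_0} : N] < \infty$ for some edge $e_0$. Then $N \leq G_{e_0}$ gives $N \in \cal D$. Writing $\phi : G \to \bar G := G/N$, the standard transitivity (surjective) reduction for the fibered isomorphism conjecture reduces the task to establishing FICwF$^P$ for $\bar G$ together with FICwF$^P$ for $\phi^{-1}(V)$ for every virtually cyclic $V \leq \bar G$. When $V$ is finite, $\phi^{-1}(V)$ is a finite-index overgroup of $N$; when $V$ is infinite virtually cyclic, $V$ contains $\l t\r$ of finite index and $\phi^{-1}(\l t\r) \cong N \rtimes \l t\r$ for the induced action, so $\phi^{-1}(V)$ is a finite-index overgroup of $N \rtimes \l t\r$. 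In either case the hypothesis on $\cal D$ supplies FICwF$^P$ for the smaller subgroup, which transfers to $\phi^{-1}(V)$ by the finite-index overgroup stability of FICwF$^P$ (Proposition \ref{product}).

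To obtain FICwF$^P$ for $\bar G = \pi_1(\bar{\cal G})$, where $\bar{\cal G}$ has vertex groups $G_v/N$ (in $\cal D$ by quotient-closure) and edge groups $G_e/N$ with $\bar G_{e_0}$ finite, I would apply the inductive hypothesis after verifying that $\bar{\cal G}$ inherits the intersection property: given a connected subgraph $\bar{\cal G}'$ of $\bar{\cal G}$, lift to $\cal G' \subseteq \cal G$, take the normal subgroup $N' \trianglelefteq \pi_1(\cal G')$ supplied by the intersection property for $\cal G$, and check that its image in $\pi_1(\bar{\cal G}')$ lies in $\bigcap_e \bar G'_e$, is normal there, and still has finite index in the designated edge group. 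The main obstacle is ensuring that the complexity measure \emph{strictly} drops, i.e., that $e_0$ is an edge whose original group $G_{e_0}$ was infinite; the raw statement of the intersection property does not automatically deliver this. I would resolve it by a more careful selection of $N$: among the (nonempty) set of edges with infinite edge group, pick one and apply the intersection property to a connected subgraph of $\cal G$ arranged so that the distinguished finite-index edge is the targeted infinite one, then intersect with $\bigcap_e G_e$. Alternatively, one refines the induction measure to a lexicographic ordering that penalizes infinite edge groups, so that quotienting by $N$ monotonically decreases it regardless of which edge is distinguished.
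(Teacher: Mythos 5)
Your overall architecture is the paper's: quotient by the normal subgroup $N$ supplied by the intersection property, use the surjective-reduction principle (Proposition \ref{product}(3)) so that the fibers to control are finite-index overgroups of $N$ and of $N\rtimes\l t\r$ (handled exactly as you say by the defining hypothesis on $\cal D$ together with Lemma \ref{finiteindex} and Proposition \ref{product}(2)), and feed the quotient graph back into an induction whose base/reassembly step is the finite-edge-group result. The difficulty you flag at the end, however, is a genuine gap, and neither of your proposed repairs closes it. The intersection property only guarantees that $N$ has finite index in \emph{some} edge group; if that edge group is finite then $N$ is finite (indeed the trivial subgroup already witnesses the intersection property for any connected subgraph containing a finite edge), so passing to $\pi_1({\cal G})/N$ changes nothing and your measure (number of infinite edge groups) does not drop. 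Your first fix fails because the subgroup produced by the intersection property for a proper connected subgraph ${\cal G}'$ is only normal in $\pi_1({\cal G}')$, not in $\pi_1({\cal G})$, and because the distinguished edge is existentially quantified — you cannot ``arrange'' which edge it is. Your second fix is vacuous in the problematic case $N=\{1\}$: no lexicographic refinement decreases when the quotient map is the identity.

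The paper resolves this by changing the induction measure and the surgery on the graph rather than by controlling which edge is distinguished. One inducts on the \emph{total} number of edges $n$. Given $N$ of finite index in ${\cal G}_e$ (finite or infinite — it does not matter), form the quotient graph ${\cal G}_1$ with groups ${\cal G}_x/N$ and then \emph{delete the edge} $e$, obtaining ${\cal G}_2$ with $n-1$ edges. The connected components of ${\cal G}_2$ inherit the hypotheses and the intersection property, so the induction hypothesis gives FICwF$^P$ for their fundamental groups, and Lemma \ref{inter} gives their residual finiteness. Since ${\cal G}_e/N$ is finite, $\pi_1({\cal G}_1)$ is the fundamental group of a graph of groups with these residually finite FICwF$^P$ vertex groups and a finite edge group, so Proposition \ref{residually-prop}(1) (your Theorem \ref{residually}(1)) applies to it; the fiber analysis over $\pi_1({\cal G})\to\pi_1({\cal G}_1)$ is then exactly the one you wrote. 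So the missing idea is not a cleverer choice of $N$ but the step ``quotient, then remove the distinguished edge and reattach it as a finite edge,'' which makes the edge count strictly decrease unconditionally. With that substitution your argument becomes the paper's proof.
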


Theorem \ref{ip} is a special case of Proposition \ref{residually-prop} $(2)$ 
and is proved in Subsection 3.7.

As a consequence of Theorem \ref{ip} we prove the following.

Let us first recall that a group $G$ is called {\it subgroup separable} 
if the following is satisfied. 
For any finitely generated subgroup $H$ of $G$ and $g\in G-H$  
there is a finite index normal subgroup 
$K$ of $G$ so that $H\subset K$ and $g\in G-K$. Equivalently, 
a group is subgroup separable if the finitely generated subgroups 
of $G$ are closed in the profinite topology of $G$. 
A subgroup separable group is 
therefore residually finite.

\begin{defn} \label{almostdefn} {\rm Let $\cal G$ be a graph 
of groups. An edge $e$ of $\cal G$ is called a {\it finite edge} 
if the edge group ${\cal G}_e$ is finite. $\cal G$ is called 
{\it almost a tree 
of groups} if there are finite edges $e_1,e_2,\ldots$ so that the  
components of ${\cal
G}-\{e_1,e_2,\ldots \}$ are trees. If we remove all the finite edges 
from a graph of groups then the components of the resulting 
graph are called {\it component subgraphs}.}\end{defn} 

\begin{thm} \label{introthm} Let $\cal G$ be a graph of groups. 
Then the FICwF$^P$ is true for $\pi_1({\cal G})$ if one of the
following five conditions is satisfied.

$(1).$ The vertex groups are virtually polycyclic and ${\cal G}$ satisfies
the intersection property.

$(2).$ The vertex groups of $\cal G$ are finitely generated nilpotent and 
$\pi_1({\cal G})$ is subgroup separable.

$(3).$ The vertex and the edge groups of any component subgraph 
(Definition \ref{almostdefn}) are fundamental
groups of closed surfaces of genus $\geq 2$. Given a component 
subgraph $\cal H$ which has at least one edge there is a subgroup 
$C<\cap_{e\in E_{\cal H}}{\cal H}_e$, 
which is of finite index in some edge group and is normal in $\pi_1({\cal H})$.

$(4).$ $\cal G$ is almost a tree of groups and the 
vertex groups of any component subgraph of $\cal G$   
are finitely generated abelian and of the same rank.

$(5).$ $\cal G$ is a tree of finitely generated abelian groups and either it has only one 
edge or FICwF$^P$ is true 
for any graph of infinite cyclic groups.\end{thm}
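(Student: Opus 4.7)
The plan is to reduce each of the five parts to Theorem \ref{ip} by choosing an appropriate class $\mathcal{D}$ and, when $\mathcal{G}$ has finite edges, to decompose along the component subgraphs, apply Theorem \ref{ip} to each component, and reassemble using Theorem \ref{residually}(1) on the resulting graph of components joined by finite edges.

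For (1), take $\mathcal{D}$ to be the class of virtually polycyclic groups: it is closed under subgroups and quotients, its members are residually finite by Hirsch's theorem, and the mapping torus $C \rtimes \langle t \rangle$ is again virtually polycyclic with FICwF$^P$ known from \cite{FJ}, so Theorem \ref{ip} applies. Part (2) then reduces to (1) once one checks that subgroup separability of $\pi_1(\mathcal{G})$ forces the intersection property when the vertex groups are finitely generated nilpotent (hence polycyclic): the intersection $H = \bigcap_{e \in E_{\mathcal{G}'}} \mathcal{G}'_e$ is finitely generated, subgroup separability supplies a finite-index subgroup of $\pi_1(\mathcal{G}')$ containing $H$, and its normal core intersected with a suitable edge group yields the required normal finite-index subgroup contained in some edge group.

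For (3) and (4), I first treat each component subgraph $\mathcal{H}$ separately. In (4) take $\mathcal{D}$ = finitely generated abelian groups: subgroups and quotients stay in the class, the groups are residually finite, and mapping tori are polycyclic (covered by (1)); the same-rank hypothesis makes each edge group commensurable with the vertex groups, so the intersection of edge groups along a connected subgraph is also commensurable with them, giving the intersection property by rank counting. In (3), because surface groups are not closed under quotients, Theorem \ref{ip} is not directly applicable; one must use the finer variant (Proposition \ref{residually-prop}) that only requires FICwF$^P$ for the mapping torus of the particular normal subgroup $C$ supplied by the hypothesis. Since $C$ is commensurable with a surface group, $C \rtimes \langle t \rangle$ is a virtual $3$-manifold group, for which FICwF$^P$ is known from \cite{R1}, \cite{R2}. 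Once FICwF$^P$ holds for each $\pi_1(\mathcal{H})$, the full $\pi_1(\mathcal{G})$ is the fundamental group of a graph of groups with these (residually finite) vertex groups and finite edge groups, and Theorem \ref{residually}(1) finishes the argument.

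Part (5) splits according to the hypothesis. If $\mathcal{G}$ has a single edge, $\pi_1(\mathcal{G})$ is an amalgam or HNN-extension of two finitely generated abelian groups over a common abelian subgroup, where the intersection property is essentially automatic, and Theorem \ref{ip} with $\mathcal{D}$ = finitely generated abelian applies as in (4). Otherwise, invoking the external hypothesis that FICwF$^P$ holds for graphs of infinite cyclic groups, one passes to a finite-index torsion-free overgroup so that vertex groups are free abelian, identifies a normal subgroup of $\pi_1(\mathcal{G})$ realized as the fundamental group of a graph of infinite cyclic subgroups, and concludes by combining with a polycyclic quotient via the extension behaviour of FICwF$^P$. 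The main obstacles I anticipate are verifying the intersection property in (2) from the purely external subgroup-separability hypothesis, and handling (3) where the naive class $\mathcal{D}$ fails closure under quotients and one must route the mapping-torus analysis through $3$-manifold groups; the assembly step via Theorem \ref{residually}(1) also requires care to confirm that the $\pi_1$ of each component subgraph is itself residually finite.
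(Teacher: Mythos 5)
Your overall strategy---reducing each part to Theorem \ref{ip} componentwise and reassembling across the finite edges via $(1)$ of Theorem \ref{residually}---is the same route the paper takes: parts $(1)$ and $(2)$ are run through $(3)$ of Proposition \ref{residually-prop}, part $(3)$ through $(4)$ of Proposition \ref{residually-prop}, and part $(4)$ through $2(i)$ of Proposition \ref{maintheorem1}. The genuine gap is in your part $(2)$. You claim to deduce the intersection property from subgroup separability by taking $H=\bigcap_{e\in E_{\mathcal{G}'}}\mathcal{G}'_e$, a finite-index subgroup of $\pi_1(\mathcal{G}')$ containing $H$, its normal core $N$, and then $N\cap\mathcal{G}'_e$ for a suitable edge $e$. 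This does not produce what Definition \ref{def1.1} demands: the required subgroup must be contained in $\bigcap_{e}\mathcal{G}'_e$, normal in $\pi_1(\mathcal{G}')$, \emph{and} of finite index in some edge group. Your $N\cap\mathcal{G}'_e$ has finite index in $\mathcal{G}'_e$ but is neither contained in the remaining edge groups nor normal in $\pi_1(\mathcal{G}')$; moreover ``a finite-index subgroup containing $H$'' is already satisfied by $\pi_1(\mathcal{G}')$ itself, so separability is doing no work in your construction. The implication you need is a nontrivial theorem of Metaftsis and Raptis \cite{MR}: for a finite graph of finitely generated nilpotent groups, $\pi_1$ is subgroup separable if and only if the graph satisfies the intersection property. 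The paper simply cites this; without it (or a correct substitute) your part $(2)$ is unproved.

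Two smaller points. In $(4)$, rank counting only shows that $\bigcap_{e}\mathcal{G}'_e$ has finite index in the edge groups; the normality in $\pi_1(\mathcal{G}')$ required by the intersection property needs the inductive argument with characteristic subgroups in Lemma \ref{ic} (normality does not follow from commensurability alone). In the multi-edge case of $(5)$, the paper's mechanism is Lemma \ref{stark}: since $\mathcal{G}$ is a tree, the abelianization $f:\pi_1(\mathcal{G})\to H_1(\pi_1(\mathcal{G}),\mathbb{Z})$ is injective on every vertex group, and it is the preimage $f^{-1}(C)$ of an infinite cyclic subgroup---not a normal subgroup of $\pi_1(\mathcal{G})$---that acts on the Bass-Serre tree with infinite cyclic or trivial stabilizers and hence falls under the external hypothesis on graphs of infinite cyclic groups; your ``finite-index torsion-free overgroup'' step has no counterpart in a correct argument and, as written, does not parse. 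Your one-edge case of $(5)$, using normality of the edge group in the two abelian vertex groups, is fine and is essentially what the paper does.
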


For examples of graphs of groups satisfying the hypothesis 
in $(3)$ see Example \ref{exm}. 

Using some recent results we deduce the following proposition.

\begin{prop}\label{k-l-theory} Let $\cal G$ be a tree of virtually cyclic groups.
Then the fibered isomorphism conjectures in $L$-theory and 
lower $K$-theory are true for $\pi_1({\cal G})\wr H$ for any finite 
group $H$.\end{prop}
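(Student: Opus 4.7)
The plan is to combine two facts: the Farrell--Jones fibered isomorphism conjecture in $L$-theory and in lower $K$-theory holds trivially for virtually cyclic groups (since these form the coefficient family of the conjecture), and it is inherited under the fundamental-group construction for graphs of groups.

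First I would check the wreath-product enhancement of the conjecture for the stabilizers. For a virtually cyclic group $V$ and a finite group $K$, the wreath product $V\wr K = V^K\rtimes K$ is virtually $\mathbb{Z}^{|K|}$-by-finite, hence virtually polycyclic. For virtually polycyclic groups the pseudo-isotopy FIC (which implies the lower $K$-theoretic FIC) together with the $L$-theoretic FIC are known by Farrell--Jones and subsequent extensions. Hence every vertex and edge group of $\mathcal{G}$ satisfies FICwF in both $L$-theory and lower $K$-theory.

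Next, I would invoke the tree-of-groups inheritance principle. For $L$-theory this is exactly the Remark following Theorem \ref{cappell4} above (citing [Bartels--L\"uck, Theorem 0.13]): if the vertex and edge stabilizers of a group action on a tree satisfy the $L$-theoretic FIC, then so does the total group. For lower $K$-theory, the analogous inheritance follows from Waldhausen's Mayer--Vietoris exact sequence of Theorem \ref{gg}, with the proviso that the Nil contributions from virtually cyclic edge groups are absorbed into the virtually cyclic summands on the source side of the Farrell--Jones assembly map. Applying this inheritance to $\pi_1(\mathcal{G})$ yields the two FICs for $\pi_1(\mathcal{G})$ itself. To obtain the wreath-product version, I would pass to the finite-index subgroup $\pi_1(\mathcal{G})^H\subset \pi_1(\mathcal{G})\wr H$ (a direct product), deduce the conjectures for this product via standard K\"unneth-type arguments, and then promote to the full wreath product using the finite-index-overgroup inheritance built into the FICwF framework (Proposition \ref{product}).

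The main obstacle, I expect, is the precise identification of the Nil contributions appearing in Waldhausen's sequence with the virtually cyclic part of the source of the Farrell--Jones assembly map for $\pi_1(\mathcal{G})\wr H$. Once this bookkeeping is carried out, the conclusion is a formal combination of the tree-of-groups inheritance with the trivial validity of FIC for virtually polycyclic coefficients.
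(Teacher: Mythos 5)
Your route is genuinely different from the paper's, and it has a gap that is not mere bookkeeping. The paper does not use any Mayer--Vietoris argument here at all: it reduces to a finite tree by continuity, then shows by induction on edges (using the gluing construction behind [Bridson--Haefliger, Cor.\ 11.19] for amalgamations over virtually cyclic subgroups) that $\pi_1({\cal G})$ acts properly, cocompactly and isometrically on a finite-dimensional $CAT(0)$ space, and then quotes Bartels--L\"uck \cite{BL1}, whose theorem for $CAT(0)$-groups already delivers the fibered conjecture in $L$-theory and lower $K$-theory together with the wreath-product stability. The hypothesis that the edge groups are virtually cyclic is exactly what makes the $CAT(0)$ gluing work; it is not used the way you use it.

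The gap in your argument is the step you defer as "bookkeeping." For an amalgamation over an \emph{infinite} virtually cyclic group, Waldhausen's Nil terms and Cappell's $\mathrm{UNil}$ terms do not vanish in general, and identifying them with the virtually cyclic contribution in the source of the Farrell--Jones assembly map is a substantial problem, not a formal one; it is essentially only understood in special cases (e.g.\ the infinite dihedral situation). Moreover, the $L$-theoretic inheritance you cite --- the remark following Theorem \ref{cappell4}, resp.\ [Bartels--L\"uck, Theorem 0.13] --- is a statement about the classical assembly map after inverting $2$, whereas Proposition \ref{k-l-theory} is an integral statement about the \emph{fibered} conjecture: you must control $\phi^{*}{\cal {VC}}$ for arbitrary homomorphisms $\phi$ into $\pi_1({\cal G})\wr H$, and a Mayer--Vietoris sequence for the assembly map of $\pi_1({\cal G})$ itself gives you neither the fibered quantifier nor the $2$-torsion. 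Your first step (virtually cyclic wreath finite is virtually polycyclic, so the stabilizers are fine) and your final finite-index promotion via Proposition \ref{product} are sound, but the middle of the argument would need to be replaced --- most economically by the paper's geometric $CAT(0)$ argument --- before the conclusion follows.
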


\begin{rem}{\rm Although we stated the above theorems for the pseudo-isotopy version 
 of the conjecture, the same statements are also true in the $L$-theory version 
of the conjecture. See \cite{R5} and Remark \ref{lastremark}.}\end{rem}

We start by stating the general 
fibered isomorphism conjecture for equivariant homology theory (\cite{BL}). We 
show that under certain conditions a group acting on a tree satisfies this 
general conjecture provided the stabilizers also satisfy the 
conjecture. Then we restrict to the pseudo-isotopy case of the
conjecture and prove the theorems. 

Finally, in Examples \ref{end1}, \ref{end} and \ref{end0} 
we provide explicit examples of groups for which 
the results of this paper can be applied to 
prove the fibered isomorphism conjecture in the 
pseudo-isotopy and $L$-theory case. We further show that the 
groups in these examples are neither $CAT(0)$ nor hyperbolic. 

\subsection{Statements of the conjecture and some propositions}

We now recall the statement of the isomorphism 
conjecture for equivariant homology theories ([\cite{BL}, Section 1]) and 
we state some propositions. 

Let ${\cal H}^-_*$ be an equivariant homology theory with values in 
$R$-modules for $R$ a commutative associative ring with unit. 
An equivariant homology theory assigns to 
a group $G$ a $G$-homology theory ${\cal H}^G_*$, which for a pair 
of $G$-CW  complex $(X,A)$, produces a ${\Bbb Z}$-graded $R$-module 
${\cal H}^G_n(X,A)$. For details see [\cite{L1}, Section 1].

A {\it family} of subgroups of a group $G$ is defined as a set   
of subgroups of $G$, which is closed under taking subgroups and 
conjugations. If $\cal C$ is a class of 
groups, which is closed under isomorphisms and taking subgroups then we 
denote by ${\cal C}(G)$ the set of all subgroups of $G$, which belong to 
$\cal C$. 
Then ${\cal C}(G)$ is a family of subgroups of $G$. For example $\cal 
{VC}$, the class of virtually cyclic groups, is closed under isomorphisms 
and taking subgroups. By definition a virtually cyclic group has a 
cyclic subgroup of finite index. Also ${\cal {FIN}}$, the class of 
finite groups is closed under isomorphisms and taking subgroups.
   
Given a group homomorphism $\phi:G\to H$ and a family $\cal C$ of 
subgroups of $H$ define $\phi^*{\cal C}$ to be the family 
of subgroups $\{K<G\ |\ \phi (K)\in {\cal C}\}$ of $G$. Given a family 
$\cal C$ of subgroups of a group $G$ there is a $G$-CW complex $E_{\cal 
C}(G)$, which is unique up to $G$-equivalence, satisfying the property that 
for $H\in {\cal C}$ the fixpoint set $E_{\cal C}(G)^H$ is 
contractible and $E_{\cal C}(G)^H=\emptyset$ for $H$ not in ${\cal C}$. 

\begin{fjic}([\cite{BL}, Definition 1.1]) Let ${\cal H}^-_*$ be an 
equivariant homology theory with values in $R$-modules. Let $G$ be a group 
and $\cal C$ be a family of subgroups of $G$. Then the {\it 
isomorphism conjecture} for the pair $(G, {\cal C})$ states that the 
projection 
$p:E_{\cal C}(G)\to pt$ to the point $pt$ induces an isomorphism 
$${\cal H}^G_n(p):{\cal H}^G_n(E_{\cal C}(G))\simeq {\cal H}^G_n(pt)$$ for $n\in 
{\Bbb Z}$. 

The {\it fibered isomorphism  conjecture} for the pair $(G, {\cal 
C})$ states that for any group homomorphism $\phi: K\to G$ the 
isomorphism conjecture is true for the pair $(K, \phi^*{\cal C})$.\end{fjic}

Let $\cal C$ be a class of groups which is closed under isomorphisms and 
taking subgroups.

\begin{defn} \label{definition} {\rm If the 
(fibered) isomorphism conjecture is
true for the pair $(G, {\cal C}(G))$ we say that the 
{\it (F)IC$_{\cal C}$ is true for $G$} or simply say 
{\it (F)IC$_{\cal C}(G)$ is satisfied}. Also we say that the {\it 
(F)ICwF$_{\cal C}(G)$ is satisfied} if
the {\it (F)IC$_{\cal C}$} is true for $G\wr H$ for any finite group
$H$. Finally, a
group homomorphism $p:G\to K$ is said to {\it satisfy the FIC$_{\cal C}$}  or {\it the 
FICwF$_{\cal C}$} 
if for $H\in p^*{\cal C}(K)$ the FIC$_{\cal C}$ or the FICwF$_{\cal C}$ is true for $H$ respectively.}\end{defn} 

The (fibered) isomorphism conjectures in the pseudo-isotopy theory, 
$K$-theory and in the $L$-theory are equivalent to the Farrell-Jones conjectures 
stated in (\S 1.7) \S 1.6 in \cite{FJ}. (For details see [\cite{BL}, Sections 5 and 6] for
the $K$ and $L$ theories and see [\cite{LR}, 4.2.1 and 4.2.2] for the pseudo-isotopy theory.) 

\begin{defn} \label{property} {\rm We say that {\it ${\cal T}_{\cal C}$} 
({\it $_w{\cal T}_{\cal C}$}) is satisfied if for a
graph of groups $\cal G$ with vertex groups from the class $\cal C$ 
the FIC$_{\cal C}$ (FICwF$_{\cal
C}$) for $\pi_1({\cal G})$ is true. 

Let us now assume that $\cal C$ contains all the 
finite groups. We say that {\it $_t{\cal T}_{\cal C}$} 
({\it $_f{\cal T}_{\cal C}$})
is satisfied if for a graph of 
groups $\cal G$ with trivial (finite) edge
groups and vertex
groups belonging to the class $\cal C$, the FIC$_{\cal
C}$ for $\pi_1({\cal G})$ is true. If we replace the FIC$_{\cal
C}$ by the FICwF$_{\cal C}$  
then we denote the corresponding properties by {\it
$_{wt}{\cal T} _{\cal C}$} (
{\it $_{wf}{\cal T} _{\cal C}$}). Clearly 
$_{w}{\cal T} _{\cal C}$ implies 
${\cal T} _{\cal C}$ and $_{w*}{\cal T} _{\cal C}$ implies 
$_{*}{\cal T} _{\cal C}$ where $*=t$ or 
$f$.

And we say that {\it ${\cal P} _{\cal C}$} is 
satisfied if for $G_1, G_2\in {\cal C}$ the product
$G_1\times G_2$ satisfies the FIC$ _{\cal C}$.

{\it ${\cal {FP}} _{\cal C}$} is satisfied if 
whenever the FIC$ _{\cal C}$ is true for two groups 
$G_1$ and $G_2$ then the FIC$ _{\cal C}$ is true for 
the free product $G_1*G_2$.

We denote the above properties for the  
equivariant homology theories $P,K, L$ or $KH$  with only a 
super-script by $P,K, L$ or $KH$ respectively. For example, 
${\cal T}_{\cal C}$ 
for $P$ is denoted by ${\cal T}^P$ since in all the first three 
cases we set ${\cal C}={\cal {VC}}$ and for $KH$  we 
set ${\cal C}={\cal {FIN}}$.}\end{defn}

We show in this article and in \cite{R4} 
that the above properties are satisfied in several 
instances of the conjecture. 

For the rest of this section we assume that $\cal C$ is also closed under 
quotients and contains all the finite groups. 

\begin{prop} \label{theorem} {\bf (Graphs of groups).} 
Let ${\cal C}={\cal {FIN}}$ or $\cal {VC}$. Let $\cal G$ be a graph of
groups and assume there is a homomorphism 
$f:\pi_1({\cal G})\to Q$ onto another group $Q$ so that the restriction
of $f$ to any vertex group has finite kernel. 
If the FIC$ _{\cal C}(Q)$ 
(FICwF$ _{\cal C}(Q)$) is satisfied then 
the FIC$ _{\cal C}(\pi_1({\cal G}))$ 
(FICwF$ _{\cal C}(\pi_1({\cal G}))$) is 
also satisfied
provided one of the 
following holds.
(In the FICwF$ _{\cal C}$-case assume in addition that 
${\cal P} _{\cal C}$ is 
satisfied.)

$(a).$ ${\cal T} _{\cal C}$ 
($_w{\cal T} _{\cal C}$) is satisfied.

$(b).$ The edge groups of $\cal G$ are finite and 
$_f{\cal T} _{\cal C}$ 
($_{wf}{\cal T} _{\cal C}$) is satisfied.

$(c).$ The edge groups of $\cal G$ are trivial and 
$_t{\cal T} _{\cal C}$ 
($_{wt}{\cal T} _{\cal C}$) is satisfied.

\end{prop}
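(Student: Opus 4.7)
The plan is to exploit the standard inheritance (``transitivity'') property of the fibered isomorphism conjecture together with a Bass--Serre analysis of subgroups. Recall that if $\phi\colon G\to Q$ is a group homomorphism and the FIC$_{\cal C}$ is true (fibered) for $Q$, then FIC$_{\cal C}(G)$ reduces to verifying FIC$_{\cal C}(S)$ for every $S<G$ with $\phi(S)\in{\cal C}(Q)$. Applied with $\phi=f$ and the assumed FIC$_{\cal C}(Q)$, it therefore suffices to treat subgroups $S<\pi_1({\cal G})$ with $f(S)\in{\cal C}$.

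Fix such an $S$. By Bass--Serre theory, $S$ is the fundamental group of a graph of groups ${\cal G}_S$ whose vertex (respectively edge) groups are intersections $S\cap g{\cal G}_v g^{-1}$ (respectively $S\cap g{\cal G}_e g^{-1}$), with $g$ running through appropriate double coset representatives. Since $f|_{{\cal G}_v}$ has finite kernel by hypothesis, the restriction of $f$ to each vertex group of ${\cal G}_S$ also has finite kernel, while its image sits inside $f(S)\in{\cal C}$ and so lies in ${\cal C}$ by closure under subgroups. Thus each vertex group of ${\cal G}_S$ is a finite extension of a group in ${\cal C}$; and since ${\cal C}\in\{{\cal FIN},{\cal VC}\}$ such extensions remain in ${\cal C}$ (a finite extension of a finite group is finite, and a finite extension of a virtually cyclic group is virtually cyclic). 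In case (b) the edge groups of ${\cal G}_S$ are subgroups of the finite edge groups of ${\cal G}$, hence finite; in case (c) they are trivial. The corresponding hypothesis ${\cal T}_{\cal C}$, $_f{\cal T}_{\cal C}$, or $_t{\cal T}_{\cal C}$ applied to ${\cal G}_S$ now yields FIC$_{\cal C}(S)$, completing the non-wreath case.

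For the FICwF$_{\cal C}$ version one must establish FIC$_{\cal C}(\pi_1({\cal G})\wr F)$ for every finite group $F$. The induced homomorphism $f\wr\mathrm{id}_F\colon\pi_1({\cal G})\wr F\to Q\wr F$, together with the identification FICwF$_{\cal C}(Q)=$\,FIC$_{\cal C}(Q\wr F)$, reduces the problem by transitivity to proving FIC$_{\cal C}(K)$ for every $K<\pi_1({\cal G})\wr F$ with $(f\wr\mathrm{id}_F)(K)\in{\cal C}$. The finite-index subgroup $K_0=K\cap\pi_1({\cal G})^F$ sits inside the direct product $\pi_1({\cal G})^F$, and for each $j\in F$ the coordinate projection $\pi_j(K_0)<\pi_1({\cal G})$ has $f$-image which is a quotient of a subgroup of the ${\cal C}$-group $(f\wr\mathrm{id}_F)(K)$, hence still in ${\cal C}$ by closure of ${\cal C}$ under subgroups and quotients. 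One then applies the wreath-equipped property $_w{\cal T}_{\cal C}$ (respectively $_{wf}{\cal T}_{\cal C}$ or $_{wt}{\cal T}_{\cal C}$) to each projection, combines the pieces using the product hypothesis ${\cal P}_{\cal C}$ to obtain FIC$_{\cal C}$ for $K_0$, and finally invokes the passage of FICwF$_{\cal C}$ to finite-index overgroups to promote this to FIC$_{\cal C}(K)$.

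The main obstacle is precisely this last step: unlike the non-wreath $S$, the subgroup $K_0$ is a subgroup of a direct product and carries no natural Bass--Serre tree action, so one cannot directly mimic the argument from the non-wreath case. Making the three ingredients --- the per-coordinate conclusion coming from the non-wreath part, the closure-under-products hypothesis ${\cal P}_{\cal C}$, and inheritance of FICwF$_{\cal C}$ under finite-index extensions --- fit together cleanly is the delicate part of the proof.
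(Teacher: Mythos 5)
Your non-wreath argument is correct and is essentially the paper's own proof: reduce via Lemma \ref{inverse} to subgroups $S$ with $f(S)\in{\cal C}$, note that $S$ acts on the Bass--Serre tree with stabilizers that are extensions of a finite group by a subgroup of $f(S)$, hence lie in ${\cal C}$ when ${\cal C}={\cal {FIN}}$ or ${\cal {VC}}$ (the paper invokes Lemma \ref{last} for the virtually cyclic case), and then apply the relevant tree property to the induced graph-of-groups decomposition of $S$.

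The gap is in the FICwF$_{\cal C}$ case, and you flag it yourself. But the ``obstacle'' you name --- that $K_0=K\cap\pi_1({\cal G})^F$ carries no Bass--Serre tree action --- is not a real obstruction, and the paper never confronts it: it simply cites $(3)$ of Proposition \ref{product}, proved earlier, which packages exactly the wreath-product bookkeeping you are trying to redo. The assembly you are missing goes as follows. Each coordinate projection $\pi_j(K_0)$ sits inside $f^{-1}(H_j)$ with $H_j=f(\pi_j(K_0))\in{\cal C}(Q)$ (closure under subgroups and quotients, as you observe); the group $f^{-1}(H_j)$ acts on the tree $T$ with stabilizers in ${\cal C}$, so the wreath tree property $_w{\cal T}_{\cal C}$ (resp.\ $_{wf}{\cal T}_{\cal C}$, $_{wt}{\cal T}_{\cal C}$) yields the FICwF$_{\cal C}$ --- not merely the FIC$_{\cal C}$ --- for $f^{-1}(H_j)$, hence for $\pi_j(K_0)$ by Lemma \ref{finiteindex}. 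Since the FICwF$_{\cal C}$ is closed under finite direct products given ${\cal P}_{\cal C}$ ($(1)$ of Proposition \ref{product}), under passage to subgroups (Lemma \ref{finiteindex}), and under finite-index overgroups ($(2)$ of Proposition \ref{product}), applying these to $K_0<\prod_{j\in F}\pi_j(K_0)$ and then to the finite-index normal inclusion $K_0\triangleleft K$ gives the FIC$_{\cal C}(K)$, which is all that Lemma \ref{inverse} applied to $f\wr\mathrm{id}_F$ requires. No tree action on $K_0$ itself is needed; the trees live one level up, on the preimages $f^{-1}(H_j)$. Once this is seen, the cleanest write-up is the paper's: prove $(3)$ of Proposition \ref{product} once and for all, after which the FICwF$_{\cal C}$ case of the present proposition is word-for-word the FIC$_{\cal C}$ case with Lemma \ref{inverse} replaced by that proposition and the plain tree properties replaced by their wreath versions.
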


For the definition of continuous ${\cal H}^-_*$ in the following 
statement 
see [\cite{BL}, Definition 3.1]. See also Proposition \ref{proposition}.

\begin{prop} \label{residually-prop}{\bf (Graphs of residually finite
groups)} Assume that ${\cal P} _{\cal C}$ and $_{wt}{\cal T} _{\cal C}$ 
are satisfied. Let $\cal G$ be a graph of groups. If $\cal G$ is 
infinite then assume that ${\cal H}^-_*$ is continuous. 

$(1).$ Assume that the edge groups of $\cal G$ are finite and the vertex
groups are residually finite. If the FICwF$ _{\cal {C}}$ is
true for the vertex groups of $\cal G$, then it is true for $\pi_1({\cal G})$.

For the next three items assume that ${\cal C}={\cal {VC}}$.

$(2).$ Let $\cal D$ be the collection of groups defined in Theorem
\ref{ip} replacing\\ `FICwF$^P$' by `FICwF$ _{\cal
{VC}}$'. Assume that the vertex groups of $\cal G$ belong to $\cal D$. Then the 
FICwF$ _{\cal {VC}}$ is true for $\pi_1({\cal G})$ if $\cal
G$ satisfies the intersection property.

$(3).$ Assume that the vertex groups of $\cal G$ are virtually polycyclic
and that the FICwF$ _{\cal {VC}}$ is true for virtually
polycyclic groups. Then the FICwF$ _{\cal {VC}}$ is true for
$\pi_1({\cal G})$ provided either $\cal G$ satisfies the
intersection property or the vertex groups are finitely generated
nilpotent and $\pi_1({\cal G})$ is subgroup separable.

$(4).$ Assume that the vertex 
and the edge groups of
any component subgraph (Definition \ref{almostdefn}) are
fundamental
groups of closed surfaces of genus $\geq 2$ and for every component 
subgraph $\cal H$, which has at least one edge there is a subgroup 
$C<\cap_{e\in {\cal H}}{\cal H}_e$ 
which is of finite index in some edge group and is normal in $\pi_1({\cal H})$.
Then 
the FICwF$ _{\cal {VC}}$ is true for $\pi_1({\cal G})$
provided the FICwF$ _{\cal {VC}}$ is true for the
fundamental groups of closed $3$-manifolds which fiber over the circle.
\end{prop}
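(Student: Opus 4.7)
The plan is to reduce each of the four statements to an application of $_{wt}{\cal T}_{\cal C}$ (which handles graphs of groups with trivial edge groups) combined with Proposition \ref{theorem}, by producing in each case either a finite-index subgroup or a surjection whose Bass--Serre action has trivial or strictly simpler edge stabilizers than the original.

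For part (1), first suppose $\cal G$ is a finite graph. Each edge group $G_e$ is finite and each vertex group is residually finite, so for each endpoint $v$ of each edge $e$ there is a finite-index normal subgroup $H_v\trianglelefteq G_v$ disjoint from $G_e\setminus\{1\}$. The standard Bass--Serre construction used to show that an amalgamated product over a finite group is virtually a free product then lifts this local choice to a finite-index normal subgroup $H\trianglelefteq \pi_1({\cal G})$ whose action on the Bass--Serre tree has trivial edge stabilizers; equivalently, $H$ is the fundamental group of a graph of groups with trivial edges whose vertex groups are subgroups of the original vertex groups. By $_{wt}{\cal T}_{\cal C}$ the FICwF$_{\cal C}$ holds for $H$, and Proposition \ref{product} together with ${\cal P}_{\cal C}$ transfers it to the finite-index overgroup $\pi_1({\cal G})$. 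For an infinite graph, write $\cal G$ as a directed union of finite subgraphs of groups, apply the above to each, and use continuity of ${\cal H}^-_*$ to pass to the union.

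For parts (2)--(4), the intersection property (or its explicit analogue in (4)) produces a subgroup $N\trianglelefteq \pi_1({\cal G})$ contained in every edge group of a suitable connected subgraph and of finite index in some edge group. The quotient $\pi_1({\cal G})/N$ then inherits a graph-of-groups structure with strictly smaller edge groups, so Proposition \ref{theorem}(a) or (b) applied to $\pi_1({\cal G})\to \pi_1({\cal G})/N$ transfers FICwF$_{\cal C}$ from the quotient to $\pi_1({\cal G})$, once we have the FICwF$_{\cal C}$ for the quotient by induction, the base case being a graph all of whose edge groups are finite, handled by part (1). In part (2) the class $\cal D$ is crafted so that $N\in {\cal D}$ (hence residually finite) and the mapping-torus hypothesis on $\cal D$ governs exactly the extensions $N\rtimes \langle t\rangle$ that appear along edges. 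Part (3) follows from (2) using that virtually polycyclic groups form such a class $\cal D$, or, in the subgroup-separable nilpotent case, uses subgroup separability to produce $N$ without assuming the intersection property. Part (4) specializes the mapping tori that arise to surface bundles over the circle, whose FICwF$_{\cal C}$ is the stated hypothesis.

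The main obstacle will be the precise inductive setup for (2)--(4): one needs a well-founded complexity on graphs of groups that strictly decreases on passing to $\pi_1({\cal G})/N$ and that terminates in the base case covered by (1) or by $_{wt}{\cal T}_{\cal C}$, and one must verify that $N$ survives the wreath product construction implicit in the "wF'' version of the conjecture. A second, more technical issue is the infinite graph case, where one has to check that the directed union of finite subgraphs of groups is compatible with both the wreath products used to formulate FICwF$_{\cal C}$ and the continuity axiom for ${\cal H}^-_*$.
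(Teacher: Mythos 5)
Your part (1) is essentially the paper's argument: one separates the finitely many nontrivial elements of the finite edge groups in a finite quotient, and the kernel acts on the Bass--Serre tree with trivial edge stabilizers, so $_{wt}{\cal T}_{\cal C}$ plus Proposition \ref{product}(2) finish. (The one thing to make precise is that ``lifting the local choices'' is exactly residual finiteness of $\pi_1({\cal G})$ itself, which is Lemma \ref{graph-res}; you cannot just intersect the $H_v$ inside the vertex groups.) The problems are in (2)--(4).

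First, the transfer step is attached to the wrong proposition. Proposition \ref{theorem} requires the homomorphism to have \emph{finite} kernel on every vertex group, but the kernel of $\pi_1({\cal G})\to\pi_1({\cal G})/N$ restricted to a vertex group is $N$ itself, which is of finite index in an infinite edge group and hence infinite; so Proposition \ref{theorem}(a)/(b) simply does not apply to this map. The correct tool is Proposition \ref{product}(3): one must verify the FICwF$_{\cal {VC}}$ for the preimage of every virtually cyclic subgroup of the quotient, and after passing to a finite-index subgroup these preimages are exactly the mapping tori $N\rtimes\langle t\rangle$ --- this is where, and why, the mapping-torus hypothesis on $\cal D$ enters (not ``along edges''). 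Second, the induction you flag as the main obstacle is genuinely missing, and ``strictly smaller edge groups'' is not the right measure: in $\pi_1({\cal G})/N$ only the distinguished edge $e$ (in whose group $N$ has finite index) becomes finite; the other edge groups ${\cal G}_{e'}/N$ need not shrink in any useful sense. The paper inducts on the number of edges: delete $e$ from the quotient graph, apply the inductive hypothesis to the components (which still satisfy the hypotheses and the intersection property), check via Lemma \ref{inter} that these components have residually finite fundamental group, and then reassemble across the single finite edge using part (1). Without that residual-finiteness check your appeal to part (1) as the base/reassembly step is unjustified. Finally, in (3) the nilpotent subgroup-separable case is not a matter of ``producing $N$ directly''; the input is the theorem of Metaftsis--Raptis that for finite graphs of finitely generated nilpotent groups, subgroup separability of $\pi_1({\cal G})$ is \emph{equivalent} to the intersection property, which reduces that case to the one already treated.
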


We denote a countable infinitely generated free group by 
$F^{\infty}$ and a countable infinitely generated abelian 
group by ${\Bbb Z}^{\infty}$. Also 
$G\rtimes H$ is a semidirect product with 
respect to an arbitrary action of $H$ on $G$. When 
$H$ is infinite cyclic and generated by the symbol $t$; 
we denote it by $\l t\r$.

\begin{prop} \label{maintheorem1} {\bf (Graphs of abelian groups).} 
Let $\cal G$ be a graph of groups whose vertex groups are 
abelian and let ${\cal H}^-_*$ be continuous. 

$(1).$ Assume that the FIC$ _{\cal
{VC}}$ is true for $F^{\infty}\rtimes \l
t\r$ and that ${\cal P} _{\cal
{VC}}$ is satisfied.
Then the FIC$_{\cal 
{VC}}$ is true for $\pi_1({\cal G})$ provided one of the 
following holds.

(a). $\cal G$ is a tree and the vertex groups of 
$\cal G$ are finitely generated and torsion free.  

For the next two items assume that the FICwF$ _{\cal
{VC}}$ is true for $F^{\infty}\rtimes \l
t\r$.

(b). $\cal G$ is a tree.

(c). $\cal G$ is not a tree and the FIC$ _{\cal 
{VC}}$ is true for ${\Bbb Z}^{\infty}\rtimes \l t\r$ for any countable 
infinitely generated abelian group ${\Bbb Z}^{\infty}$. 

$(2).$ Assume that ${\cal P} _{\cal {VC}}$ and 
$_{wt}{\cal T} _{\cal {VC}}$ are satisfied. 
Furthermore, assume that $\cal G$ is almost a tree of groups and the
vertex and the edge groups of any component subgraph of $\cal G$ 
are finitely generated and have the same rank. Then the FICwF$_{\cal
{VC}}$ is true for $\pi_1({\cal G})$ provided one of the followings 
is satisfied.

(i). The FICwF$ _{\cal
{VC}}$ is true for ${\Bbb Z}^n\rtimes \l t\r$ for all positive integers  
$n$.

(ii). The vertex and the edge groups of any component subgraph of 
$\cal G$ have rank equal to $1$.

$(3).$ Assume that ${\cal P} _{\cal {VC}}$ and 
$_{w}{\cal T} _{\cal {VC}}$ (${\cal T}_{\cal {VC}}$) are satisfied. 
Furthermore, assume that $\cal G$ is a tree of groups. Then the 
FICwF$_{\cal {VC}}$ (FIC$_{\cal {VC}}$) is true for $\pi_1({\cal G})$. 
\end{prop}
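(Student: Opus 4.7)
The strategy is to apply Proposition \ref{theorem} by constructing, for the graph of abelian groups $\cal G$, a surjective homomorphism $f:\pi_1({\cal G})\to Q$ whose restriction to every vertex group has finite kernel and whose target $Q$ lies in a class for which FIC is available via the hypotheses. When $\cal G$ is infinite I first reduce to the finite case by continuity of ${\cal H}^-_*$: writing $\cal G$ as the directed union of its finite subgraphs ${\cal G}_i$ gives $\pi_1({\cal G})=\mathrm{colim}\,\pi_1({\cal G}_i)$, and the conjecture transfers through such colimits.

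For parts $(1)$ and $(3)$, with $\cal G$ a tree, I would exploit the absence of cycles to build compatible finite-index subgroups $H_v\triangleleft {\cal G}_v$ by induction rooted at a chosen vertex: once $H_v$ is fixed, the requirement $H_w\cap {\cal G}_e=H_v\cap {\cal G}_e$ on each adjacent edge $e=(v,w)$ constrains $H_w$ to a finite-index subgroup of ${\cal G}_w$, which can always be met because ${\cal G}_w$ is abelian and hence residually finite. The resulting quotient graph ${\cal G}'$ (vertex groups ${\cal G}_v/H_v$ with induced edge embeddings) is a tree of virtually cyclic (or finite) groups, so hypothesis $_w{\cal T}_{\cal VC}$ (resp.\ ${\cal T}_{\cal VC}$) yields FICwF (resp.\ FIC) for $Q=\pi_1({\cal G}')$, and Proposition \ref{theorem}(a) then produces the conclusion for $\pi_1({\cal G})$. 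In part $(1)$(a) the vertex groups are f.g.\ torsion-free, ${\cal G}_v\cong{\Bbb Z}^{n_v}$, which simplifies the propagation of the $H_v$; the extra hypothesis on $F^{\infty}\rtimes\l t\r$ is needed to cover the amalgamated-free-product structure of $Q$, while ${\cal P}_{\cal VC}$ handles products with finite groups. Parts $(1)$(b),(c) use the FICwF version of the same input together with continuity inside each (now possibly not finitely generated) abelian vertex group, and in (c) the ${\Bbb Z}^{\infty}\rtimes\l t\r$ hypothesis absorbs the HNN loops contributed by cycles in $\cal G$. Part $(3)$ is the same argument in its cleanest form, since the strong hypothesis $_w{\cal T}_{\cal VC}$ (or ${\cal T}_{\cal VC}$) makes the target $Q$ immediately tractable.

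For part $(2)$, I would split $\cal G$ along its finite edges into component subgraphs ${\cal H}_j$, each a tree of finitely generated abelian groups of common rank $n_j$. The equal-rank hypothesis forces every edge inclusion ${\cal H}_{j,e}\hookrightarrow {\cal H}_{j,v}$ to have finite index modulo torsion, so $\pi_1({\cal H}_j)$ contains a normal free abelian subgroup ${\Bbb Z}^{n_j}$ of finite index up to a finite quotient; case (ii) is $n_j=1$, where $\pi_1({\cal H}_j)$ is virtually cyclic and $_{wt}{\cal T}_{\cal VC}$ applies directly, and case (i) invokes the FICwF for ${\Bbb Z}^{n_j}\rtimes\l t\r$ together with ${\cal P}_{\cal VC}$ and Proposition \ref{theorem} to handle each $\pi_1({\cal H}_j)$. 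Finally $\pi_1({\cal G})$ is assembled from the $\pi_1({\cal H}_j)$ over the finite edge groups, and Proposition \ref{residually-prop}(1) (whose hypotheses $_{wt}{\cal T}_{\cal VC}$ and ${\cal P}_{\cal VC}$ we already have) completes the argument. The main obstacle I expect throughout is the global compatibility of the finite-index subgroups across edges: the tree assumption in $(1)$ and $(3)$ and the equal-rank hypothesis in $(2)$ are precisely what guarantee this compatibility, and this is where the abelianness of vertex groups is essential; without these assumptions the inductive propagation of $H_v$ would generate incompatible constraints on the edge groups and the quotient target $Q$ would no longer satisfy FIC.
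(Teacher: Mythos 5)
Your central step does not work. You propose to produce the surjection required by Proposition \ref{theorem} by killing compatible \emph{finite-index} subgroups $H_v\triangleleft{\cal G}_v$, so that $Q$ is a tree of finite groups. But Proposition \ref{theorem} demands that the restriction of $f:\pi_1({\cal G})\to Q$ to each vertex group have \emph{finite kernel}, and in your construction that kernel is $H_v$ itself, which is infinite whenever ${\cal G}_v$ is infinite. So the quotient you build is exactly the wrong one, and Proposition \ref{theorem}$(a)$ cannot be invoked. The paper's proof of $(1)$ and $(3)$ goes in the opposite direction: by Lemma \ref{stark}, for a tree of finitely generated abelian groups the abelianization $\pi_1({\cal G})\to H_1(\pi_1({\cal G}),{\Bbb Z})$ (mod torsion in case $(1)(a)$) is \emph{injective} on every vertex group; hence its kernel $K$ meets every vertex stabilizer of the Bass--Serre tree trivially, acts on the tree with trivial stabilizers, and is therefore free. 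One then applies Lemma \ref{inverse} (resp. $(3)$ of Proposition \ref{product}) to this map, and the preimage of a virtually cyclic subgroup of the abelianization is, up to finite index, a group of the form $K\rtimes\l t\r$ with $K$ free --- this, and not ``the amalgamated-free-product structure of $Q$,'' is precisely where the hypothesis on $F^{\infty}\rtimes\l t\r$ is consumed. Your argument never produces a free-by-cyclic group, so the stated hypotheses are never actually used, which is a symptom of the gap. In $(1)(c)$ the role of ${\Bbb Z}^{\infty}\rtimes\l t\r$ is likewise specific: after surjecting onto the free group $F^r$ carried by the loops, the kernel is an infinite tree of finitely generated abelian groups, and its abelianization is a countable infinitely generated abelian group, giving an extension of $A\rtimes\l t\r$ by a free group.

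Your outline of $(2)$ --- split along the finite edges, treat each component subgraph, reassemble via $(1)$ of Proposition \ref{residually-prop} --- does match the paper, but two of its claims are false. Lemma \ref{ic} gives a rank-$r$ free abelian normal subgroup $A$ of $\pi_1({\cal H}_j)$ whose quotient is the fundamental group of a \emph{finite-type graph of groups}, i.e.\ a virtually free group, not a finite group; so $\pi_1({\cal H}_j)$ is not virtually ${\Bbb Z}^{n_j}$. And in case (ii) the group $\pi_1({\cal H}_j)$ is certainly not virtually cyclic: ${\Bbb Z}*_{\Bbb Z}{\Bbb Z}$ with the two inclusions of coprime index is a torus knot group (cf.\ Example \ref{end1}). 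The paper instead applies $(3)$ of Proposition \ref{product} to $\pi_1({\cal H}_j)\to\pi_1({\cal H}_j)/A$ and reduces to the FICwF for $A\rtimes\l t\r$, which in case (ii) holds because such a group contains a rank-$2$ free abelian subgroup of finite index (Lemma \ref{last}), and in case (i) is the stated hypothesis.
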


In $(1)$ of Proposition \ref{maintheorem1}
 if we assume that the  FICwF$ _{\cal 
{VC}}$ is true for ${\Bbb Z}^{\infty}\rtimes \l t\r$ and for 
$F^{\infty}\rtimes \l t\r$ then one can deduce from the same 
proof, using $(3)$ of Proposition \ref{product} instead 
of Lemma \ref{inverse}, that the  FICwF$ _{\cal
{VC}}$ is true for $\pi_1({\cal G})$ irrespective of whether 
$\cal G$ is a tree or not.

\begin{prop} \label{TC} {\bf ($_w{\cal T}^{P}$).} Let $\cal G$ be a 
graph of virtually cyclic groups so that the graph is almost a tree. 
Furthermore, assume that either the edge groups are finite or the 
infinite vertex groups are abelian.
Then the FICwF$^{P}$ is true for 
$\pi_1({\cal G})$.\end{prop}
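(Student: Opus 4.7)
The plan is to split according to the disjunction in the hypothesis.

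In the first case, when all edge groups of ${\cal G}$ are finite, the vertex groups are virtually cyclic, hence virtually polycyclic, and therefore satisfy the FICwF$^P$ by the classical Farrell--Jones result (see item~1 of the remark following Corollary~\ref{onend}); they are also residually finite, since a virtually cyclic group is either finite or virtually infinite cyclic. Theorem~\ref{residually}(1) then applies directly to ${\cal G}$ and yields the FICwF$^P$ for $\pi_1({\cal G})$.

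In the second case, every infinite vertex group is an abelian virtually cyclic group, so it is isomorphic to ${\Bbb Z}\oplus F$ with $F$ finite abelian, and has torsion-free rank~$1$. Any infinite edge group embeds in such a vertex group, hence is itself abelian of rank~$1$; and any edge incident to a finite vertex has finite edge group (being a subgroup of the finite vertex group). Consequently, removing all finite edges of ${\cal G}$ produces a forest whose components are either singleton vertices carrying a finite vertex group, or trees all of whose vertex and edge groups are finitely generated abelian of rank~$1$. I would check the FICwF$^P$ separately for each component: singleton finite components have finite fundamental group and so are trivial, while for the abelian tree components Theorem~\ref{introthm}(4) applies with common rank~$1$.

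Using Bass--Serre theory, I would then view $\pi_1({\cal G})$ as the fundamental group of a new graph of groups ${\cal G}^{\ast}$ whose vertex groups are the fundamental groups of the component subgraphs just listed and whose edge groups are the finite edges of ${\cal G}$ that were removed. Then ${\cal G}^{\ast}$ has only finite edge groups, each vertex group of ${\cal G}^{\ast}$ satisfies the FICwF$^P$, and a second application of Theorem~\ref{residually}(1) would finish the proof provided the vertex groups of ${\cal G}^{\ast}$ are residually finite.

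The main obstacle is precisely this last residual finiteness, for the vertex groups of ${\cal G}^{\ast}$ arising from the abelian tree components: they are fundamental groups of finite trees of finitely generated abelian groups amalgamated along finitely generated abelian subgroups, and their residual finiteness follows from G.~Baumslag's theorem on residual finiteness of generalised free products of finitely generated abelian groups, applied iteratively along the tree; for infinite ${\cal G}$ one reduces to the finite case by the continuity of ${\cal H}^P_\ast$, as in Proposition~\ref{residually-prop}.
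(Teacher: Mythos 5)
Your decomposition is structurally the same as the paper's: delete the finite edges, treat each resulting tree component separately, and reassemble along the finite edge groups using residual finiteness of the new vertex groups (this is exactly Lemma \ref{almost} followed by Proposition \ref{residually-prop}(1)). But as written the argument is circular. Both external results you lean on --- Theorem \ref{residually}(1) and Theorem \ref{introthm}(4) --- are proved in the paper \emph{from} Corollary \ref{tt}, i.e.\ from the properties $_{wf}{\cal T}^{P}$ and $_{wt}{\cal T}^{P}$, and Corollary \ref{tt} is itself an immediate consequence of the very proposition you are proving (its finite-edge-group case). Put differently: if you replace Theorem \ref{residually}(1) by the underlying Proposition \ref{residually-prop}(1), you must verify its hypothesis $_{wt}{\cal T}^{P}$ --- that the FICwF$^{P}$ holds for the fundamental group of a graph of virtually cyclic groups with \emph{trivial} edge groups --- and this verification appears nowhere in your proposal. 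That is the real content of the base case. The paper supplies it by writing such a fundamental group as a free product of the (virtually cyclic) vertex groups with a finitely generated free group (Lemma \ref{almost1}), invoking Corollary \ref{PVC}, Proposition \ref{graph-finite} and Lemma \ref{finitefree} for the factors, and then applying the Reduction Theorem of \cite{R1} to the free product. Without this step neither of your two cases gets off the ground.

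A second, smaller gap concerns the residual finiteness of the tree-of-rank-one-abelian-groups components. Baumslag's theorem treats a single amalgam $A*_CB$ of finitely generated abelian groups; it does not iterate along a tree, because after one amalgamation the factors are no longer abelian (already $Z*_ZZ$ is a torus knot group). The paper instead goes through the intersection property: Lemma \ref{ic} produces a rank-one subgroup, normal in the whole fundamental group and of finite index in every edge and vertex group, so that the quotient is a finite-type graph of groups (Lemma \ref{fi}); residual finiteness then follows from Lemmas \ref{graph-res}, \ref{res-ext} and \ref{inter}, packaged as Lemma \ref{rfcyclic}. The same normal subgroup is what allows Proposition \ref{residually-prop}(3) to be applied to these components, reducing the FICwF$^{P}$ there to mapping tori of rank-one abelian groups (handled by Corollary \ref{PVC} and Lemma \ref{last}); so the intersection property does double duty that your appeal to Theorem \ref{introthm}(4) conceals rather than discharges.
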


An immediate corollary of the proposition is the following.

\begin{cor} \label{tt} {\bf ($_{wf}{\cal T}^{P}$).} $_{wf}{\cal T}^{P}$ 
(and hence $_{wt}{\cal T}^{P}$) is satisfied.\end{cor}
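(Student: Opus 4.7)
The plan is to derive this corollary directly from Proposition \ref{TC}, which has already done the substantive work. First I would unwind the definitions: $_{wf}{\cal T}^P$ being satisfied means that for every graph of groups $\cal G$ whose edge groups are finite and whose vertex groups are virtually cyclic (since ${\cal C}={\cal{VC}}$ in the pseudo-isotopy setting), the FICwF$^P$ is true for $\pi_1({\cal G})$. So the task is to verify FICwF$^P$ for exactly this class of fundamental groups.

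Next I would observe that such a $\cal G$ automatically falls under the scope of Proposition \ref{TC}. Indeed, by Definition \ref{almostdefn} a graph of groups is almost a tree provided that after removing some collection of finite edges the resulting components are trees. If \emph{every} edge of $\cal G$ is finite, we may remove \emph{all} edges; the remaining components are isolated vertices, which are trivially trees. Hence $\cal G$ is almost a tree of groups. Combined with the hypotheses that the vertex groups are virtually cyclic and the edge groups are finite, the first of the two alternatives in Proposition \ref{TC} (``the edge groups are finite'') is satisfied.

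Applying Proposition \ref{TC} then gives FICwF$^P$ for $\pi_1({\cal G})$, which is precisely $_{wf}{\cal T}^P$. Finally, to obtain $_{wt}{\cal T}^P$ I would simply note that trivial groups are finite, so any graph of groups with trivial edge groups is a special case of one with finite edge groups; hence the implication $_{wf}{\cal T}^P\Rightarrow \,_{wt}{\cal T}^P$ is immediate from the definitions in Definition \ref{property}. Since the real content is already packaged inside Proposition \ref{TC}, there is no substantive obstacle here — the only step worth double-checking is the trivial-tree observation that makes ``almost a tree'' automatic when all edges are finite.
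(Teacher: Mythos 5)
Your proof is correct and matches the paper's intent exactly: the paper states Corollary \ref{tt} as an ``immediate corollary'' of Proposition \ref{TC}, and your argument---that a graph of groups with all edge groups finite is automatically almost a tree (remove every edge, leaving single vertices), so the ``edge groups are finite'' alternative of Proposition \ref{TC} applies, with $_{wt}{\cal T}^{P}$ following since trivial groups are finite---is precisely the unwinding the paper leaves to the reader. There is no circularity, since the proof of Proposition \ref{TC} verifies $_{wt}{\cal T}^{P}$ directly rather than invoking this corollary.
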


\begin{rem}\label{possible}{\rm We remark here that it 
is not yet known whether the FIC$^{P}$ is true 
for the $HNN$-extension $G=C*_C$, with 
respect to the maps $id:C\to C$ and $f:C\to C$ where $C$ is an infinite 
cyclic group, $id$ is the identity map and $f(u)=u^2$ for $u\in C$. This 
was mentioned in the introduction of \cite{FL}. Note 
that $G$ is isomorphic to the semidirect product ${\Bbb 
Z}[\frac{1}{2}]\rtimes \l t\r$, where $t$ acts on ${\Bbb 
Z}[\frac{1}{2}]$ by multiplication by $2$. The main problem with this
example is that ${\Bbb 
Z}[\frac{1}{2}]\rtimes \l t\r$ is not subgroup separable.}\end{rem}

Now, we recall that a property ({\it tree property}) similar to ${\cal 
T} _{\cal C}$ was defined in 
[\cite{BL}, Definition 4.1]. The 
tree property of \cite{BL} is stronger than ${\cal T}_{\cal 
C}$. Corollary 4.4 in \cite{BL} was proved under the assumption 
that the tree property is satisfied. In the following proposition we state  
that this is true with the weaker assumption that ${\cal 
T} _{\cal 
{FIN}}$ is satisfied. The proof of the proposition goes exactly in 
the same way as the proof of [\cite{BL}, Corollary 4.4]. It can also be deduced
from Proposition \ref{theorem}.

\begin{prop} \label{finite} Let 
$1\to K\to G\to Q\to 1$ be an exact sequence of groups. Assume that  
${\cal T} _{\cal {FIN}}$ is satisfied and $K$ acts 
on a tree with 
finite stabilizers and that the FIC$ _{\cal {FIN}}(Q)$ is 
satisfied. Then the FIC$ _{\cal {FIN}}(G)$ is also 
satisfied.\end{prop}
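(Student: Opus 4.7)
The plan is to reduce FIC$_{\cal{FIN}}(G)$ to the same statement for the preimages in $G$ of finite subgroups of $Q$, and then to invoke the hypothesis that ${\cal T}_{\cal{FIN}}$ is satisfied. First I would pull back the fibered FIC$_{\cal{FIN}}(Q)$ along the surjection $p\colon G\to Q$: by the very definition of the fibered conjecture, this yields the isomorphism conjecture for the pair $(G,p^{*}{\cal{FIN}}(Q))$, where $p^{*}{\cal{FIN}}(Q)$ consists of those subgroups of $G$ whose image under $p$ is finite. Since ${\cal{FIN}}(G)\subseteq p^{*}{\cal{FIN}}(Q)$, the standard transitivity principle for the fibered isomorphism conjecture reduces matters to verifying the fibered FIC$_{\cal{FIN}}$ for every $H\in p^{*}{\cal{FIN}}(Q)$; and since the fibered conjecture descends to subgroups, it is enough to establish FIC$_{\cal{FIN}}(p^{-1}(F))$ for every finite subgroup $F\leq Q$.

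For such an $F$, the group $p^{-1}(F)$ contains $K$ as a normal subgroup of finite index $|F|$. Since $K$ acts on a tree with finite stabilizers, Bass-Serre theory realizes $K=\pi_{1}({\cal G})$ as the fundamental group of a graph of finite groups $\cal G$, and Bass's covering theory of graphs of groups then exhibits any finite-index overgroup of $\pi_{1}({\cal G})$ as $\pi_{1}$ of a graph of finite groups as well (the new vertex groups are finite extensions of the old vertex groups, hence again finite). Thus $p^{-1}(F)=\pi_{1}({\cal G}_{F})$ for some graph of finite groups ${\cal G}_{F}$, and the hypothesis ${\cal T}_{\cal{FIN}}$ immediately yields FIC$_{\cal{FIN}}(p^{-1}(F))$, completing the argument.

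As noted in the statement, the same conclusion can alternatively be deduced from Proposition \ref{theorem}: the $K$-action on its Bass-Serre tree can be extended (after a $G$-equivariant refinement if necessary, using normality of $K$) to a $G$-action, realizing $G=\pi_{1}({\cal G}')$ with vertex stabilizers $G_{v}$ satisfying $G_{v}\cap K=K_{v}$ finite; then $p$ restricts to a map with finite kernel on each vertex group, and Proposition \ref{theorem}(a) applies verbatim. In either route the main technical obstacle is the same: showing that a finite-index extension of a group which acts on a tree with finite stabilizers again has such an action. For finitely generated $K$ this is immediate from virtual freeness and the Karrass-Pietrowski-Solitar theorem, but in general one needs Bass's covering theory of graphs of groups to produce the lift.
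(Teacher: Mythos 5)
Your overall architecture is the right one and is essentially the route the paper intends (the paper itself only points to [\cite{BL}, Corollary 4.4] and to Proposition \ref{theorem} rather than writing out a proof): apply Lemma \ref{inverse} to $p\colon G\to Q$, use the hereditary property to reduce to $p^{-1}(F)$ for $F\le Q$ finite, exhibit $p^{-1}(F)$ as the fundamental group of a graph of finite groups, and invoke ${\cal T}_{\cal {FIN}}$. The gap is in the one step that carries all the content, namely the claim that a finite-index overgroup $V=p^{-1}(F)$ of a group $K$ acting on a tree with finite stabilizers again acts on a tree with finite stabilizers. You attribute this to ``Bass's covering theory of graphs of groups,'' but covering theory runs in the opposite direction: it takes a decomposition of a group and produces decompositions of its \emph{subgroups} via their action on the same Bass--Serre tree; it does not manufacture a tree action for an \emph{overgroup}, since $V$ need not act on the Bass--Serre tree of $K$ at all (conjugation by an element of $V$ induces an automorphism of $K$ that need not preserve the given splitting even up to conjugacy). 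So the sentence ``the new vertex groups are finite extensions of the old vertex groups'' has no construction behind it.

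The claim itself is true, but the correct justification is different. When $K$ is finitely generated the quotient of the minimal invariant subtree is finite, so $K$ is the fundamental group of a finite graph of finite groups and hence virtually free; then $V$ is virtually free and [\cite{DD}, Theorem IV.1.6] (as in Lemma \ref{finitefree}) or Karrass--Pietrowski--Solitar gives the tree action for $V$ --- this part you do say correctly. In general, however, the stabilizers need not have bounded order and $K$ need not be virtually free; the statement one needs is Dunwoody's theorem that $\mathrm{cd}_{\Bbb Q}\,\Gamma\le 1$ characterizes fundamental groups of graphs of finite groups: the $K$-tree yields $\mathrm{cd}_{\Bbb Q}K\le 1$, a restriction--corestriction argument gives $\mathrm{cd}_{\Bbb Q}V\le \mathrm{cd}_{\Bbb Q}K$ because $[V:K]$ is invertible in ${\Bbb Q}$, and Dunwoody's theorem converts this back into an action of $V$ on a tree with finite stabilizers. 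Since Proposition \ref{finite} does not assume that ${\cal H}^-_*$ is continuous, you cannot reduce to the finitely generated case, so this is precisely the step that must be supplied, and ``Bass's covering theory'' will not supply it. (The alternative you sketch through Proposition \ref{theorem} has the same defect in stronger form: a $K$-action on a tree cannot in general be extended to a $G$-action, ``equivariant refinement'' notwithstanding, and in any case that route also reduces, after Lemma \ref{inverse}, to the same finite-extension lemma.)
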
 

\subsection{Graphs of groups}

In this subsection, we prove some results on graphs of groups needed for the 
proofs of the theorems and propositions.

We start by recalling that by Bass-Serre theory,  
a group acts on a tree without inversion if and only 
if the group is isomorphic to the 
fundamental group of a graph of 
groups (the Structure Theorem I.4.1 in \cite{DD}). Therefore,  
throughout the paper, by `action of a group on a tree' we will mean 
an action without inversion.

\begin{lemma} \label{almost} Let $\cal G$ be a finite and almost a tree
of groups (Definition \ref{almostdefn}). 
Then there is another graph of groups $\cal H$ with the following 
properties.

$(1).$ $\pi_1({\cal G})\simeq \pi_1({\cal H})$.

$(2).$ Either $\cal H$ has no edge or the edge groups 
of $\cal H$ are finite.

$(3).$ The vertex groups of $\cal H$ are of the form $\pi_1({\cal K})$ 
where $\cal K$ varies over subgraphs of groups of $\cal G$ which are 
maximal with respect to the 
property that the underlying graph of $\cal K$ is a tree and the 
edge (if there is any) groups of 
$\cal K$ are all infinite.\end{lemma}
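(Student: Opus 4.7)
The plan is to obtain $\cal H$ by first removing all finite edges of $\cal G$ and then treating each resulting component as a single vertex of the new graph of groups. Let $E_f \subseteq E_{\cal G}$ denote the set of all finite edges. Since $\cal G$ is almost a tree of groups there exist finite edges $e_1, e_2, \ldots \in E_f$ whose removal yields components that are trees; removing the remaining finite edges from a forest still gives a forest, so the components $\cal K_1, \ldots, \cal K_n$ of $\cal G - E_f$ are trees whose edges (if any) all have infinite edge groups. I would then verify that these $\cal K_i$ are precisely the maximal subgraphs of groups with the property required in (3): any strictly larger subgraph of groups with tree underlying graph and only infinite edge groups would have to contain an edge outside $\cal K_i$, and such an edge either lies in $E_f$ (violating the infinite-edge-group hypothesis) or is an infinite edge of $\cal G - E_f$ joining $\cal K_i$ to some different component $\cal K_j$, which is impossible because both endpoints of any edge of $\cal G - E_f$ lie in the same component.

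Next, I would define $\cal H$ with vertex set $\{v_1, \ldots, v_n\}$ and vertex groups ${\cal H}_{v_i} := \pi_1({\cal K}_i)$; its edge set is $E_f$, where a finite edge $e$ of $\cal G$ with endpoints $u, w$ lying in $\cal K_i$ and $\cal K_j$ respectively becomes an edge of $\cal H$ joining $v_i$ and $v_j$, with edge group ${\cal H}_e := {\cal G}_e$ and the two injective edge-group maps obtained by composing the given inclusions ${\cal G}_e \hookrightarrow {\cal G}_u$ and ${\cal G}_e \hookrightarrow {\cal G}_w$ with the canonical inclusions ${\cal G}_u \hookrightarrow \pi_1({\cal K}_i)$ and ${\cal G}_w \hookrightarrow \pi_1({\cal K}_j)$. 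Properties (2) and (3) are then built into the construction; in the degenerate case $E_f = \emptyset$ the graph $\cal G$ is itself a single tree with only infinite edge groups, and $\cal H$ reduces to a single vertex carrying $\pi_1(\cal G)$ with no edges.

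For (1), the plan is to invoke the standard \emph{collapsing} construction from Bass-Serre theory: if $\cal K$ is a connected subgraph of groups of $\cal G$, then choosing a spanning tree of $\cal G$ which restricts to a spanning tree of $\cal K$ and comparing the canonical presentations (generators from the vertex groups, stable letters for edges outside the spanning tree, relations from edge-group inclusions) identifies $\pi_1(\cal G)$ with the fundamental group of the graph of groups obtained by contracting $\cal K$ to a single vertex carrying $\pi_1(\cal K)$. Applying this step in turn to each of the disjoint connected subgraphs $\cal K_1, \ldots, \cal K_n$ transforms $\cal G$ into exactly the $\cal H$ constructed above, giving $\pi_1({\cal G}) \simeq \pi_1({\cal H})$. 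The main (mild) obstacle is the simultaneous collapse of several disjoint subgraphs together with keeping track of the edge-group maps; I would handle this by iterating the single-subgraph case and noting that collapsing one $\cal K_i$ leaves the remaining $\cal K_j$ as disjoint connected subgraphs of the intermediate graph of groups, and by choosing compatible spanning trees at each stage so that the inclusions in the final $\cal H$ are exactly those described above. This is essentially a bookkeeping verification against the Bass-Serre presentation rather than a substantive obstacle.
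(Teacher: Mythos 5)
Your proof is correct, but it is organized differently from the paper's. The paper argues by induction on the number of finite edges: it removes a single finite edge $e$, writes $\pi_1({\cal G})$ as the HNN extension $\pi_1({\cal G}-\{e\})*_{{\cal G}_e}$ (or as the amalgam $\pi_1({\cal G}_1)*_{{\cal G}_e}\pi_1({\cal G}_2)$ when $e$ disconnects the graph), applies the inductive hypothesis to the smaller graph(s), and then re-attaches $e$ as a finite edge of the resulting graph ${\cal H}_1$. You instead remove all finite edges at once, take the components ${\cal K}_1,\ldots,{\cal K}_n$ as the vertex data of $\cal H$ directly, and justify $\pi_1({\cal G})\simeq\pi_1({\cal H})$ by the general Bass--Serre collapsing construction for connected subgraphs of groups. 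The two routes rest on the same underlying facts --- the collapsing lemma you invoke is itself proved by iterating exactly the one-edge decomposition the paper uses --- so neither is more elementary; but your version makes property $(3)$ transparent (you actually verify that the ${\cal K}_i$ are maximal among tree subgraphs with infinite edge groups, a point the paper dismisses with ``it is now easy to check''), while the paper's induction makes property $(1)$ immediate at each step and avoids the spanning-tree bookkeeping that your simultaneous collapse requires. One small point worth making explicit in your write-up: since each ${\cal K}_i$ is a tree of groups, the inclusions ${\cal G}_u\hookrightarrow\pi_1({\cal K}_i)$ that you compose with are canonical (no conjugation ambiguity arises, as they come from the colimit description of the fundamental group of a tree of groups), and this is what makes the edge maps of $\cal H$ well defined independently of choices.
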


\begin{proof} 
The proof is by
induction on the number of edges of the graph. Recall 
that an edge is called a finite edge if the corresponding edge 
group is finite (Definition \ref{almostdefn}).
If the graph 
has no finite edge then by definition 
of almost a graph of groups $\cal G$ is a tree. In this case we can take 
$\cal H$ to be a graph consisting of a single vertex and associate the 
group $\pi_1({\cal G})$ to the vertex. So assume 
${\cal G}$ has
$n$ finite edges and that the lemma is true for graphs with $\leq 
n-1$ finite edges.   
Let $e$ be an edge of ${\cal G}$ with ${\cal G}_e$ finite. 
If ${\cal G}-\{e\}$ is connected
then $\pi_1({\cal G})\simeq \pi_1({\cal
G}_1)*_{{\cal G}_e}$ where ${\cal G}_1={\cal G}-\{e\}$ is a graph 
with $n-1$ finite 
edges. By the induction hypothesis there 
is a graph of groups ${\cal H}_1$ satisfying $(1), (2)$ and $(3)$ for 
${\cal G}_1$. Let $v_1$ and $v_2$ be the vertices of $\cal G$ to  
which the ends of $e$ are 
attached and let $v'_1$ and $v'_2$ be the vertices of 
${\cal H}_1$ so that ${\cal G}_{v_i}$ is a 
subgroup of ${\cal H}_{v'_i}$ for $i=1,2$. (Note 
that $v_1$ and $v_2$ could be the same vertex). 
Define $\cal H$ by attaching an edge 
$e'$ to ${\cal H}_1$ so that the ends of $e'$ are attached to 
$v'_1$ and $v'_2$ and associate 
the group ${\cal G}_e$ to $e'$. The injective homomorphisms  
${\cal H}_{e'}\to {\cal H}_{v'_i}$ for $i=1,2$   
are defined by the homomorphisms ${\cal G}_e\to {\cal G}_{v_i}$.  It is now 
easy to check that $\cal H$ satisfies $(1), (2)$ and $(3)$ for $\cal G$.
On the other hand if ${\cal G}-\{e\}$ has two components say ${\cal G}_1$ and 
${\cal G}_2$ then $\pi_1({\cal G})\simeq \pi_1({\cal
G}_1)*_{{\cal G}_e} \pi_1({\cal G}_2)$ where ${\cal G}_1$ and ${\cal G}_2$ 
have $\leq
n-1$ finite edges. Using the induction hypothesis and 
a similar argument as above we complete the proof of the  
lemma.\end{proof}

\begin{lemma}\label{finitefree} A finitely generated group contains a free 
subgroup of finite 
index if and only if the group acts on a tree with finite 
stabilizers. And a group acts on a tree with trivial 
stabilizers if and only if the group is free.\end{lemma}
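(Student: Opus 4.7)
The plan is to derive both equivalences from the Bass-Serre Structure Theorem (I.4.1 of \cite{DD}) already invoked in this subsection, proving the second, simpler statement first and then using it as a lemma for the first. If $G$ is free with basis $S$, its Cayley graph with respect to $S$ is a tree on which $G$ acts freely by left translation, so $G$ acts without inversion and with trivial stabilizers. Conversely, if $G$ acts without inversion on a tree with trivial stabilizers, the Structure Theorem identifies $G$ with $\pi_1(\mathcal{G})$ for a graph of groups whose vertex and edge groups (being stabilizers of the action) are all trivial; the fundamental group of a graph of trivial groups coincides with the topological fundamental group of the underlying graph, and is therefore a free group.

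For the forward direction of the first equivalence, assume $G$ is finitely generated and contains a free subgroup $F$ of finite index. Replacing $F$ by its normal core $N=\bigcap_{g\in G}gFg^{-1}$, which is again free of finite index in $G$, I may assume $F$ is normal in $G$. I then invoke the theorem of Karrass-Pietrowski-Solitar, which states that a finitely generated virtually free group is isomorphic to the fundamental group of a finite graph of finite groups; the corresponding Bass-Serre tree is then the desired tree on which $G$ acts with finite vertex and edge stabilizers.

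For the converse, suppose $G$ is finitely generated and acts without inversion on a tree $T$ with finite stabilizers. By the Structure Theorem, $G\cong \pi_1(\mathcal{G})$ where $\mathcal{G}$ is a graph of finite groups, and finite generation of $G$ allows me to replace $\mathcal{G}$ by a finite subgraph of groups with the same fundamental group. I will then construct a homomorphism $\phi:G\to Q$ onto a finite group $Q$ whose restriction to each vertex group $\mathcal{G}_{v_i}$ is injective, by embedding the finitely many $\mathcal{G}_{v_i}$ into a common finite symmetric group and extending across the finite graph using the universal property of $\pi_1(\mathcal{G})$. Since every element of finite order in $G$ acting on the tree $T$ fixes a vertex and is therefore conjugate into some $\mathcal{G}_{v_i}$, the kernel $H=\ker\phi$ is torsion free of finite index in $G$; it acts freely on $T$, and hence is free by the second equivalence already established.

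The main obstacle I expect is the appeal to Karrass-Pietrowski-Solitar in the forward direction of the first equivalence and, dually, the construction of the finite quotient $\phi$ in the converse, which amounts to the statement that a finitely generated fundamental group of a finite graph of finite groups is virtually torsion free. These are the classical facts carrying the genuine content of the lemma; the remainder is bookkeeping with the Bass-Serre correspondence.
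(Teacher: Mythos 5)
Your argument is correct in substance but follows a genuinely different route from the paper. The paper disposes of the whole lemma in two lines by citing [\cite{DD}, Theorem IV.1.6], which states that a group has a free subgroup of finite index if and only if it acts on a tree with finite stabilizers \emph{of bounded order}; finite generation lets one pass to a finite subgraph of the quotient graph of groups, whence the bounded-order hypothesis is automatic, and the statement about trivial stabilizers is the degenerate case. You instead reassemble the equivalence from more primitive pieces: the Cayley-graph/Structure-Theorem argument for the second assertion, Karrass--Pietrowski--Solitar for the forward direction of the first, and the existence of a finite quotient injective on the vertex groups (equivalently, virtual torsion-freeness of the fundamental group of a finite graph of finite groups) for the converse. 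What the paper's route buys is brevity and a single clean citation; what yours buys is that it exhibits explicitly the torsion-free free subgroup of finite index, which is the object one actually uses downstream. The one soft spot is your construction of the finite quotient $\phi$: embedding the vertex groups in a common symmetric group does not by itself yield a homomorphism on $\pi_1(\mathcal G)$, because the universal property requires the chosen maps to agree on each edge group up to conjugation by the image of the corresponding stable letter, and arranging this is precisely where the content lies. You flag this honestly, and the cleanest repair is available inside the paper itself: Lemma \ref{graph-res} shows $\pi_1(\mathcal G)$ is residually finite, so a finite-index normal subgroup avoiding the finitely many nontrivial elements of the (finitely many, finite) vertex groups gives the desired torsion-free kernel. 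With that substitution your proof is complete.
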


\begin{proof} By  
[\cite{DD}, Theorem IV.1.6] a group contains a free subgroup of finite index if 
and only if the group acts on a tree with finite stabilizers 
and the stabilizers have bounded order. The proof of the lemma 
now follows easily.\end{proof}

We will also need the following two Lemmas.

\begin{lemma} \label{stark} Let $\cal G$ be a graph of finitely 
generated abelian groups so that the underlying graph of $\cal G$ is a 
tree. Then the restriction of the abelianization 
homomorphism $\pi_1({\cal G})\to H_1(\pi_1({\cal G}), {\Bbb Z})$ to each 
vertex group of 
the tree of groups $\cal G$ is injective.\end{lemma}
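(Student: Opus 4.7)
The plan is to describe the abelianization $G^{\mathrm{ab}}$ of $G := \pi_1(\mathcal{G})$ explicitly and check directly that each vertex group injects into it. Since $\mathcal{G}$ is a \emph{tree} of groups, the standard Bass--Serre presentation of $G$ uses no stable letters: the generators are the elements of the vertex groups $G_v$, and the relations are the internal relations of the $G_v$'s together with the identifications $\phi_e^1(x) = \phi_e^2(x)$ for each edge $e$ with endpoints $v_1,v_2$ and each $x \in G_e$, where $\phi_e^i : G_e \hookrightarrow G_{v_i}$. Abelianizing, and noting that each $G_v$ is already abelian,
$$G^{\mathrm{ab}} \;=\; \Bigl(\bigoplus_{v\in V_\mathcal{G}} G_v\Bigr)\Big/ N,$$
where $N$ is the subgroup generated by $\phi_e^1(x) - \phi_e^2(x)$ over all edges $e$ and all $x \in G_e$.

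Fix a vertex $v_0$ and an element $g\in G_{v_0}$ whose image in $G^{\mathrm{ab}}$ is trivial. Then, viewing $g$ inside $\bigoplus_v G_v$, one has a finite-sum expression
$$g \;=\; \sum_{e\in E_0}\bigl(\phi_e^1(y_e) - \phi_e^2(y_e)\bigr) \qquad (y_e\in G_e)$$
for some finite $E_0 \subseteq E_\mathcal{G}$. I will prove by induction on $|E_0|$ that $g=0$; the case $|E_0|=0$ is trivial.

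For the inductive step, let $T_0$ be the smallest subtree of $\mathcal{G}$ containing $v_0$ together with every endpoint of every edge of $E_0$; it is a finite subtree. Any leaf of $T_0$ is either $v_0$ or an endpoint of some edge of $E_0$, since an interior vertex of a path in a tree has degree at least two; moreover, at such a non-$v_0$ leaf $v$, the unique edge of $T_0$ at $v$ must itself lie in $E_0$. If $T_0$ has any edge at all it has at least two leaves, so we may choose a leaf $v \neq v_0$ together with its unique incident edge $e\in E_0$. Reading off the $G_v$-coordinate of the displayed equation, only $e$ contributes, and the left-hand side vanishes (since $v\neq v_0$), giving $\pm\,\phi_e^i(y_e)=0$ in $G_v$ for the relevant $i\in\{1,2\}$. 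Injectivity of the edge inclusion forces $y_e=0$, so $e$ may be dropped from $E_0$ without altering the sum, and the induction closes.

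The only place where the tree hypothesis is really used is the assertion that a non-$v_0$ leaf of $T_0$ meets exactly one edge of $E_0$, so that injectivity of a single edge map suffices to kill the corresponding coefficient; this is the main step to get right. If $\mathcal{G}$ contained a cycle, relators could conspire around the cycle and cancel collectively, and the leaf-peeling reduction would break down---which is consistent with the fact that no such injectivity holds in general for HNN extensions.
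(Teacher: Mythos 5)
Your proof is correct, and it takes a genuinely different route from the paper's. The paper does not argue from scratch: it cites [\cite{S}, Lemma 3.1] for finite trees of finitely generated free abelian groups, asserts that the same proof goes through without torsion-freeness, and then handles infinite trees by writing $\cal G$ as an increasing union of finite subtrees and passing to the direct limit. You instead compute the abelianization directly from the Bass--Serre presentation of a tree of groups (no stable letters), which --- since abelianization is a left adjoint and the vertex groups are already abelian --- gives the cokernel description $\bigl(\bigoplus_{v} {\cal G}_v\bigr)/N$, and then kill a putative relation by peeling leaves off the finite convex hull of its support. What your approach buys: it is self-contained where the paper defers the essential content to a reference; it never uses finite generation of the vertex groups, only that they are abelian and the edge maps injective; and it absorbs the infinite case automatically, since any element of $N$ has finite support, so no separate limit argument is needed. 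One point worth making fully explicit in your inductive step: at a non-$v_0$ leaf $v$ of $T_0$ you need not only that the unique $T_0$-edge at $v$ lies in $E_0$, but also that no \emph{other} edge of $E_0$ meets $v$; this holds because every edge of $E_0$ has both endpoints in $T_0$ and hence (as $T_0$ is a subtree of a tree) is itself an edge of $T_0$, so a second $E_0$-edge at $v$ would contradict $v$ being a leaf. Your write-up essentially contains this, but it is the one place where the $G_v$-coordinate bookkeeping could silently fail and deserves a sentence.
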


\begin{proof} If the tree $\cal G$ is finite and the vertex groups are 
finitely generated free abelian then it was proved in 
[\cite{S}, Lemma 3.1] that there is a homomorphism $\pi_1({\cal 
G})\to A$ onto a free abelian group so that the restriction of this 
homomorphism to each vertex group is injective. In fact it was shown  
there that the abelianization homomorphism $\pi_1({\cal 
G})\to H_1(\pi_1({\cal G}), {\Bbb Z})$ is injective when restricted 
to the vertex groups,  
and, then since the vertex groups are torsion free 
$\pi_1({\cal
G})\to A=H_1(\pi_1({\cal 
G}), {\Bbb Z})/\{torsion\}$ is also injective on the vertex groups. 
The same proof 
goes through without the torsion free vertex group assumption to 
prove the lemma when $\cal G$ is finite. Therefore, we 
mention 
the additional arguments needed in the infinite case. First, write 
${\cal G}$ as 
an increasing union of finite trees ${\cal G}_i$ of finitely 
generated abelian groups. Consider the following commutative diagram.

$$\begin{CD}
\pi_1({\cal G}_i) @>>> H_1(\pi_1({\cal 
G}_i), {\Bbb Z})\\
@VVV    @VVV\\
\pi_1({\cal G}_{i+1}) @>>> H_1(\pi_1({\cal 
G}_{i+1}), {\Bbb Z})
\end{CD}$$

Note that the left hand side vertical map is injective. By the finite 
tree case the restriction of the two horizontal maps to each vertex 
group of the respective trees are injective.
Now, since group homology and fundamental group commute 
with direct limit, taking limit completes the proof 
of the Lemma.\end{proof}

\begin{lemma} \label{fi} Let $\cal G$ be a finite graph of finitely 
generated groups
satisfying the following.

{\bf P.} Each edge group 
is of finite index in the end vertex groups of the
edge. Also assume that the intersection of the edge 
groups contains a subgroup $C$ (say), which is normal 
in $\pi_1({\cal G})$ and is of finite index in some edge $e$ (say) group. 

Then $\pi_1({\cal
G})/C$ is isomorphic to the fundamental group of a  
finite-type (Definition \ref{def1.1}) 
graph of groups. Consequently $\cal G$ has the
intersection property.\end{lemma}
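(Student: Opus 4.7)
The plan is to first propagate the finiteness of $[{\cal G}_e : C]$ along the graph, then form the quotient graph of groups by $C$, and finally extract the intersection property essentially for free.

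\textbf{Step 1 (propagating finite index).} Let $e_0$ be the distinguished edge for which $[{\cal G}_{e_0}:C]<\infty$. Since $C$ lies in every edge group and each edge group has finite index in its end vertex groups by property \textbf{P}, I get $[{\cal G}_v : C]<\infty$ for the two endpoints $v$ of $e_0$. Now I run an induction along paths in the (finite, connected) underlying graph: if $v$ is a vertex already known to satisfy $[{\cal G}_v:C]<\infty$ and $e$ is an edge from $v$ to $w$, then $C\subset {\cal G}_e\subset {\cal G}_v$ forces $[{\cal G}_e:C]<\infty$, and then $[{\cal G}_w:{\cal G}_e]<\infty$ gives $[{\cal G}_w:C]<\infty$. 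Hence $C$ has finite index in every vertex group and every edge group of $\cal G$.

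\textbf{Step 2 (the quotient graph of groups).} $C$ is a subgroup of each vertex group and of each edge group, and is normal in $\pi_1({\cal G})$, so in particular normal in each of these subgroups. Define $\bar{\cal G}$ to be the graph of groups on the same underlying graph as $\cal G$ with $\bar{\cal G}_v := {\cal G}_v/C$ and $\bar{\cal G}_e := {\cal G}_e/C$; the edge maps ${\cal G}_e\hookrightarrow {\cal G}_v$ descend to injections $\bar{\cal G}_e\hookrightarrow \bar{\cal G}_v$ because the preimage of $C$ under each edge inclusion is exactly $C$ (this uses that the edge and vertex embeddings into $\pi_1({\cal G})$ all restrict to the identity on $C$, which is the content of ``$C$ lies in the intersection of the edge groups inside $\pi_1({\cal G})$''). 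By Step 1, all vertex and edge groups of $\bar{\cal G}$ are finite, so $\bar{\cal G}$ is of finite-type. It remains to identify $\pi_1(\bar{\cal G})$ with $\pi_1({\cal G})/C$; this is the standard fact from Bass--Serre theory that when a normal subgroup of the fundamental group of a graph of groups is contained in (and equals its intersection with) every vertex group, the quotient is computed factor-wise on the graph of groups. Concretely, the presentation of $\pi_1({\cal G})$ in terms of generators and relations from the $({\cal G}_v,{\cal G}_e)$ descends to a presentation of $\pi_1(\bar{\cal G})$, and killing $C$ in $\pi_1({\cal G})$ gives exactly these relations.

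\textbf{Step 3 (intersection property).} Let ${\cal G}'$ be any connected subgraph of groups of $\cal G$ with at least one edge. Since $C$ lies in every edge group of $\cal G$, it lies in $\bigcap_{e\in E_{{\cal G}'}}{\cal G}'_e$. Because ${\cal G}'$ has some vertex, $C$ is contained in $\pi_1({\cal G}')$, and the normality of $C$ in $\pi_1({\cal G})\supseteq \pi_1({\cal G}')$ gives normality in $\pi_1({\cal G}')$. Finally, by Step 1, $C$ has finite index in every edge group of ${\cal G}'$. Thus $\cal G$ has the intersection property.

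\textbf{Main obstacle.} The only nontrivial point is Step 2: checking that the edge maps of $\bar{\cal G}$ remain injective and that $\pi_1(\bar{\cal G})\simeq \pi_1({\cal G})/C$. Everything else is an easy connectivity argument or a direct rewriting of Definition \ref{def1.1}. I would present Step 2 by invoking the Bass--Serre presentation of $\pi_1({\cal G})$ explicitly and dividing out by the normal subgroup generated by $C$, then observing that this agrees term by term with the Bass--Serre presentation of $\bar{\cal G}$.
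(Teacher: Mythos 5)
Your proof is correct, and it establishes exactly the two facts the paper's proof does --- that $C$ has finite index in every vertex and edge group, and that $\pi_1({\cal G})/C$ is the fundamental group of the quotient graph of groups with vertex groups ${\cal G}_v/C$ --- but it is organized differently. The paper argues by induction on the number of edges: it removes an edge $e'\neq e$ (a non-disconnecting one if possible, otherwise a pendant edge of the tree ${\cal G}-\{e\}$), applies the induction hypothesis to the smaller graph, and then checks that ${\cal G}_{e'}/C$ and the relevant vertex quotients are finite; the identification of $\pi_1({\cal G})/C$ with a quotient graph of groups is thereby only ever invoked one amalgamation or HNN extension at a time. You instead propagate the finite index of $C$ along paths of the connected graph in a single pass, and then form the quotient graph of groups $\bar{\cal G}$ globally, justifying $\pi_1(\bar{\cal G})\simeq\pi_1({\cal G})/C$ by comparing Bass--Serre presentations. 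Your version has the advantage of isolating where each hypothesis enters: normality of $C$ in $\pi_1({\cal G})$ is what makes the edge inclusions descend to injections ${\cal G}_e/C\hookrightarrow{\cal G}_v/C$ (for a non-tree edge this uses $t_eCt_e^{-1}=C$ to see that both edge maps pull $C$ back to the same subgroup of ${\cal G}_e$), while property {\bf P} is used only for the index propagation. The paper's induction avoids stating the global presentation argument at the cost of repeating the one-edge case. Your Step 3 also makes explicit the deduction of the intersection property (including that $\pi_1({\cal G}')\leq\pi_1({\cal G})$ for a connected subgraph), which the paper leaves as an unexplained ``consequently.''
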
 

\begin{proof} 
The proof is by induction on the number of edges of the graph. 

{\bf Induction hypothesis.} $(IH_n)$ For any finite graph of groups $\cal 
G$ with 
$\leq n$ edges, which satisfies {\bf P}, $\pi_1({\cal
G})/C$ is isomorphic to the fundamental group of a graph of groups whose
underlying graph is the same as that of $\cal G$ and the vertex 
groups are finite and isomorphic to ${\cal G}_v/C$ where 
$v\in V_{\cal G}$. 

If $\cal G$ has one edge $e$ then $C<{\cal G}_e$ and $C$ 
is normal in $\pi_1({\cal G})$. First, assume $e$ disconnects 
$\cal G$. Since ${\cal G}_e$ is of 
finite index in the end vertex groups and $C$ is also 
of finite index in ${\cal G}_e$, $C$ is of finite index in the 
end vertex groups of $e$. Therefore, $\pi_1({\cal G})/C$ 
has the desired property. The argument is the same when 
$e$ does not disconnect $\cal G$.
 
Now, assume $\cal G$ has $n$ edges and satisfies {\bf P}.

Let us first consider the case where there is an 
edge $e'$ other than $e$ so that  
${\cal G}-\{e'\}={\cal D}$ (say)  
is a connected graph. Note that $\cal D$ has $n-1$ edges 
and satisfies $\bf P$.

Hence, by $IH_{n-1}$, $\pi_1({\cal D})/C$ is isomorphic 
to the fundamental group of a finite-type 
graph of groups with $\cal D$ as the underlying 
graph and the vertex groups 
are of the form ${\cal D}_v/C$.  

Let $v$ be an end vertex of $e'$. Then 
${\cal D}_v/C$ is finite. 
Also by hypothesis ${\cal G}_{e'}$ 
is of finite index in ${\cal G}_v={\cal D}_v$. Therefore, 
${\cal G}_{e'}/C$ is also finite. This 
completes the proof in this case.

Now, if every edge $e'\neq e$ disconnects $\cal G$ then $\cal G-\{e\}$ is a 
tree. Let $e'$ be an edge other than $e$ so that one 
end vertex $v'$ (say) of $e'$ has valency $1$. Such an edge 
exists because $\cal G-\{e\}$ is a tree. Let 
${\cal D}={\cal G}-(\{e'\}\cup \{v'\})$. 
Then 
$\pi_1({\cal G})\simeq \pi_1({\cal D})*_{{\cal G}_{e'}}{\cal G}_{v'}$. 
Let $v''$ be the other end vertex of $e'$. Then by the induction 
hypothesis $C$ is of finite index in 
${\cal G}_{v''}={\cal D}_{v''}$. Hence, $C$ 
is of finite index in ${\cal G}_{e'}$. Also ${\cal G}_{e'}$ is of 
finite index in ${\cal G}_{v'}$. Therefore, $C$ is also of finite index 
in ${\cal G}_{v'}$. 

This completes the proof.  
\end{proof}

The following lemma and example give some concrete examples of graphs of groups
with intersection property.

\begin{lemma} \label{ic} Let $\cal G$ be a finite graph of groups so that 
all the vertex and the edge groups are finitely generated abelian and of the 
same rank $r$ (say) and the underlying graph of $\cal G$ is a tree. Then 
$\cap_{e\in E_{\cal G}}{\cal G}_e$ contains a rank $r$ free 
abelian subgroup 
$C$ which is normal in $\pi_1({\cal G})$ so that $\pi_1({\cal
G})/C$ is isomorphic to the fundamental group of a graph of groups whose 
underlying graph is $\cal G$ and vertex groups are finite and isomorphic 
to ${\cal G}_v/C$ where $v\in V_{\cal G}$.\end{lemma}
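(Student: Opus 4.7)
The plan is to construct $C$ by working inside the abelianization $A = H_1(\pi, \mathbb{Z})$ of $\pi = \pi_1(\mathcal{G})$, exploiting the injectivity provided by Lemma \ref{stark}, and then invoke Lemma \ref{fi} to obtain the description of the quotient. If $\mathcal{G}$ has no edges the claim is immediate by taking $C$ to be the free part of the single vertex group, so assume $\mathcal{G}$ has at least one edge.

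Let $\phi : \pi \to A$ be the abelianization. By Lemma \ref{stark}, $\phi$ is injective on every vertex group $\mathcal{G}_v$, and hence on every edge group $\mathcal{G}_e$. Set $A_v := \phi(\mathcal{G}_v)$ and $A_e := \phi(\mathcal{G}_e)$. Because $\mathcal{G}_e \subset \mathcal{G}_v$ is an inclusion of finitely generated abelian groups of the same rank $r$ whenever $v$ is an end of $e$, $A_e$ has finite index in $A_v$; combined with the connectedness of the tree and the transitivity of commensurability, this makes all the $A_v$ and $A_e$ pairwise commensurable rank-$r$ subgroups of $A$. A standard induction shows that the intersection of finitely many pairwise commensurable rank-$r$ subgroups of an abelian group is again of rank $r$, so $B := \bigcap_{e \in E_{\mathcal{G}}} A_e$ is finitely generated abelian of rank $r$. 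Pick a free abelian subgroup $C' \subset B$ of rank $r$, obtained from a structure-theorem splitting $B \cong \mathbb{Z}^r \oplus T$.

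Define $C := (\phi|_{\mathcal{G}_v})^{-1}(C')$ for an arbitrary vertex $v$. The key point is that this subgroup is independent of $v$ and sits inside every edge group. Indeed, given $c \in C$ and any edge $e$ with endpoint $v$, $\phi(c) \in C' \subset A_e$ gives some $c' \in \mathcal{G}_e \subset \mathcal{G}_v$ with $\phi(c') = \phi(c)$, and injectivity of $\phi|_{\mathcal{G}_v}$ forces $c = c' \in \mathcal{G}_e$. The same argument using the other endpoint $w$ of $e$ shows that the pullback via $\phi|_{\mathcal{G}_w}$ yields the identical subgroup, and connectedness of the tree propagates this to all vertices. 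Thus $C$ is well-defined, contained in every $\mathcal{G}_v$ and every $\mathcal{G}_e$, and maps isomorphically under $\phi$ to $C'$, so $C$ is free abelian of rank $r$. Since the underlying graph is a tree, $\pi$ is generated by its vertex groups, each of which is abelian and contains $C$; hence $C$ is normal in $\pi$.

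For the final claim, $C$ has finite index in every edge group (both are rank $r$) and every edge group has finite index in each adjacent vertex group, so Property {\bf P} of Lemma \ref{fi} is satisfied. That lemma then supplies the identification of $\pi_1(\mathcal{G})/C$ as the fundamental group of a graph of groups with underlying graph $\mathcal{G}$ and vertex groups $\mathcal{G}_v/C$, all finite, as required. The main delicate point is the $v$-independence and edge-containment of the pullback $C$, and this is precisely where Lemma \ref{stark} is indispensable.
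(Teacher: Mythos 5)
Your proof is correct, but it takes a genuinely different route from the paper. The paper argues by induction on the number of edges: it peels off a leaf vertex, writes $\pi_1({\cal G})\simeq \pi_1({\cal G}')*_{{\cal G}_e}{\cal G}_v$, intersects the inductively obtained normal subgroup $C_1$ with the new edge group (using Lemma \ref{rank}), passes to a finite-index \emph{characteristic} subgroup of $C_1$ to preserve normality in $\pi_1({\cal G}')$, and then invokes Lemma \ref{gene} on normal forms in amalgams to get normality in the whole group. You instead construct $C$ globally in one step: push everything into the abelianization $A=H_1(\pi_1({\cal G}),{\Bbb Z})$, where Lemma \ref{stark} makes all vertex and edge groups into pairwise commensurable rank-$r$ subgroups, intersect there, and pull back. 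Your verification that the pullback is independent of the chosen vertex and lands in every edge group is the essential point and is done correctly; normality then comes for free, since $C$ lies in every (abelian, hence centralizing) vertex group and these generate $\pi_1({\cal G})$ because the graph is a tree --- this neatly avoids both the characteristic-subgroup trick and Lemma \ref{gene}. The trade-off is that your argument leans on Lemma \ref{stark} (which the paper proves independently, so there is no circularity), whereas the paper's induction is self-contained at the level of amalgam combinatorics. One small presentational caveat: the stated conclusion of Lemma \ref{fi} only says that $\pi_1({\cal G})/C$ is the fundamental group of a finite-type graph of groups; the sharper description you want (underlying graph ${\cal G}$, vertex groups ${\cal G}_v/C$) is what its proof's induction hypothesis $(IH_n)$ actually establishes, so you should cite that rather than the bare statement --- the paper's own proof is equally terse on this point.
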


\begin{proof} The proof is by induction on the number of edges. 
If the graph has one edge $e$ then clearly ${\cal G}_e$ is normal 
in $\pi_1({\cal G})$, for ${\cal G}_e$ is normal in the two end 
vertex groups. This follows from Lemma \ref{gene}. 
So by induction assume that the lemma is true for 
graphs with $\leq n-1$ edges. Let $\cal G$ be a finite graph with $n$ 
edges and satisfies the hypothesis of the Lemma. Consider an 
edge $e$ which has one end vertex $v$ (say) with valency $1$. Such 
an edge exists because the graph is a tree. Let 
$v_1$ be the other end vertex of $e$. Then 
$\pi_1({\cal G})\simeq \pi_1({\cal G}')*_{{\cal G}_e}{\cal G}_v$. 
Here ${\cal G}'={\cal G}-(\{e\}\cup\{v\})$. Clearly by the induction hypothesis 
there is a finitely generated free abelian normal subgroup 
$C_1<\cap_{e'\in E_{{\cal G}'}}{{\cal G}'}_{e'}$ of rank 
$r$ of $\pi_1({\cal G}_1)$ satisfying all the required properties. 
Now, note that $C_1\cap {\cal G}_e=C'$ (say) is of finite index 
in ${{\cal G}'}_{v_1}$ and also in $C_1$ and $C'$ has rank $r$. 
This follows from the following easy to 
verify Lemma. Now, since $C_1$ is finitely generated and $C'$ is of finite 
index in $C_1$ we can find a characteristic subgroup $C (<C')$ of $C_1$ of finite 
index. Therefore, $C$ has rank $r$ and is normal in $\pi_1({\cal G}')$, since 
$C_1$ is normal in $\pi_1({\cal G}')$. Obviously $C$ is normal in 
${\cal G}_v$. Therefore, we again use Lemma \ref{gene} 
to conclude that $C$ is normal in 
$\pi_1({\cal G})$. The other properties are clearly satisfied.
This completes the proof of the Lemma.

\begin{lemma} \label{rank} Let $G$ be a finitely generated abelian group 
of rank $r$ and let $G_1,\ldots , G_k$ be rank $r$ subgroups of $G$. Then 
$\cap_{i=1}^{i=k}G_i$ is of rank $r$ and of finite index in 
$G$.\end{lemma}  
\end{proof}

\begin{exm}\label{exm} {\rm Let $G$ and $H$ be finitely generated groups 
and let $H$ be a finite index normal subgroup of $G$. Let $f:H\to G$ 
be the inclusion. Consider a finite tree of groups $\cal G$ whose vertex groups 
are copies of $G$ and the edge groups are copies of $H$. Also assume 
that the maps from the edge groups to the vertex groups (defining the 
tree of group structure) are $f$. Then $\cal G$ has the intersection 
property.}\end{exm}

\begin{lemma} \label{gene} Let $G=G_1*_HG_2$ be a generalized 
free product. If $H$ is normal 
in both $G_1$ and $G_2$ then $H$ is normal in $G$.\end{lemma}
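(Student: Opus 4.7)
The plan is to use the universal property that the normalizer of a subgroup is itself a subgroup, combined with the fact that $G_1$ and $G_2$ generate $G = G_1 *_H G_2$. Let $N_G(H) = \{g \in G \mid gHg^{-1} = H\}$. This is a subgroup of $G$ for any subgroup $H$, so it suffices to show that $N_G(H)$ contains a generating set for $G$.

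By hypothesis, $H$ is normal in $G_1$, which means that for every $g \in G_1$ one has $gHg^{-1} = H$, i.e.\ $G_1 \subseteq N_G(H)$. The same argument with $G_2$ in place of $G_1$ gives $G_2 \subseteq N_G(H)$. Since $G$ is the amalgamated free product $G_1 *_H G_2$, the subgroups $G_1$ and $G_2$ together generate $G$. Therefore $N_G(H) \supseteq \langle G_1 \cup G_2 \rangle = G$, so $N_G(H) = G$, and $H$ is normal in $G$.

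This argument is almost tautological, so there is really no serious obstacle; the only point worth a brief remark is to recall why $G_1$ and $G_2$ generate $G$. This is immediate from the description of $G$ as a pushout (or equivalently from Bass--Serre theory, where $G$ acts on the Bass--Serre tree with vertex stabilizers conjugate to $G_1$ and $G_2$): any element of $G$ is represented by a word in the letters of $G_1$ and $G_2$. No normal form theorem, and no injectivity of the maps $H \to G_i$, is needed for this direction. The same proof works verbatim for $HNN$-extensions, which will be relevant for the other graph-of-groups constructions appearing elsewhere in the paper.
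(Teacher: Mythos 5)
Your proof is correct, and it takes a genuinely different (and more economical) route than the paper. The paper disposes of this lemma by appealing to the normal form theorem for amalgamated free products, citing Lyndon--Schupp: one checks directly that conjugating an element of $H$ by a reduced word lands back in $H$. You instead observe that the normalizer $N_G(H)$ is a subgroup containing both $G_1$ and $G_2$ (here normality of $H$ in $G_i$ gives the full equality $gHg^{-1}=H$ for $g\in G_i$, not merely an inclusion, which is exactly what membership in the normalizer requires), and that these two subgroups generate $G=G_1*_HG_2$ by the very construction of the pushout. Your approach buys several things: it avoids the normal form theorem entirely, so it does not need injectivity of the edge-to-vertex maps and works at the level of images in $G$; it is visibly functorial; and, as you note, it transfers verbatim to $HNN$-extensions and more general graphs of groups, which is relevant to how the lemma is used later in the paper (e.g.\ in the proof of Lemma \ref{ic}). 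The paper's normal-form argument gives slightly more refined information (a description of coset representatives), but none of that is needed for the statement at hand. The one point worth making explicit in your write-up is that the two images of $H$ in $G$ (via $G_1$ and via $G_2$) coincide by definition of the amalgam, so ``$H$ normal in $G$'' is unambiguous; you implicitly use this when you pass from normality in $G_1$ and in $G_2$ to normality under the subgroup they generate.
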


\begin{proof} The proof follows by using the normal form of 
elements in a generalized free product. See [\cite{LS}, p. 72].
\end{proof}

\subsection{Residually finite groups} 
We now recall and also prove some basic results we need on
residually finite groups. For this section we 
abbreviate `residually finite' by $\cal {RF}$.

\begin{lemma} \label{graph-res} The
fundamental group of a finite graph of $\cal {RF}$ groups with finite edge
groups is $\cal {RF}$.\end{lemma}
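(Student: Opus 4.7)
The plan is to proceed by induction on the number of edges of the underlying graph. The base case is a graph with no edges, so $\pi_1({\cal G})$ equals the unique vertex group, which is $\cal{RF}$ by hypothesis.

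For the inductive step, suppose the result holds for graphs with fewer than $n$ edges, and let $\cal G$ have $n \geq 1$ edges. Select any edge $e$. Either ${\cal G} - \{e\}$ is connected, in which case $\pi_1({\cal G}) \simeq \pi_1({\cal G} - \{e\}) *_{{\cal G}_e}$ is an HNN extension of $\pi_1({\cal G} - \{e\})$ over the finite group ${\cal G}_e$; or ${\cal G} - \{e\}$ splits into two components ${\cal G}_1, {\cal G}_2$, in which case $\pi_1({\cal G}) \simeq \pi_1({\cal G}_1) *_{{\cal G}_e} \pi_1({\cal G}_2)$ is an amalgamated free product over the finite group ${\cal G}_e$. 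In either case the smaller graphs of groups again have $\cal{RF}$ vertex groups and finite edge groups, so by induction their fundamental groups are $\cal{RF}$. The inductive step therefore reduces to two well-known preservation statements: (i) if $A$ and $B$ are $\cal{RF}$ and $H$ is finite, then $A*_HB$ is $\cal{RF}$ (Baumslag); (ii) if $A$ is $\cal{RF}$ and $H$ is finite, then any HNN extension $A*_H$ is $\cal{RF}$ (Cohen).

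The main obstacle is establishing (i) and (ii). The standard strategy, which I would outline rather than grind through, is as follows. Given a nontrivial element $g$ in reduced normal form, use finiteness of the edge group together with residual finiteness of the vertex groups to choose finite-index normal subgroups $N_v \triangleleft {\cal G}_v$ such that $N_v \cap {\cal G}_e = \{1\}$ for every edge $e$ incident at $v$, and such that each syllable of $g$ stays outside the relevant $H$-cosets modulo the $N_v$. Passing to the induced quotient produces either an amalgamated product of finite groups over a finite subgroup or an HNN extension of a finite group with finite associated subgroups; by Stallings' structure theorem this quotient group is virtually free, hence itself $\cal{RF}$. Composing with a finite quotient detecting the image of $g$ yields the desired finite quotient of $\pi_1({\cal G})$.

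If one prefers to avoid the normal form calculation, an alternative approach is to construct directly a finite-index normal subgroup $N \triangleleft \pi_1({\cal G})$ that intersects each conjugate of each edge group trivially; then $N$ acts on the Bass-Serre tree with trivial edge stabilizers, so by Bass-Serre theory $N$ is a free product of vertex stabilizers (each residually finite, being subgroups of conjugates of the ${\cal G}_v$) with a free group, and a theorem of Gruenberg gives that any free product of $\cal{RF}$ groups is $\cal{RF}$. Either route works; the first is cleaner for the inductive framing adopted here.
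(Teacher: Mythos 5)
Your proof is correct and follows essentially the same route as the paper: induction on the number of edges, splitting off one edge to reduce to an amalgamated free product or an HNN extension over a finite group, and then invoking the classical preservation results for residual finiteness. The only difference is cosmetic: the paper handles the amalgamated case by embedding $\pi_1({\cal G}_1)*_F\pi_1({\cal G}_2)$ into $(\pi_1({\cal G}_1)*\pi_1({\cal G}_2))*_F$, so that it needs only Gruenberg's free-product theorem plus the HNN result of Baumslag--Tretkoff/Cohen, whereas you cite Baumslag's amalgamated-product result directly (and sketch its standard proof); both are legitimate.
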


\begin{proof} The proof is by induction on the number of edges. If there
is no edge then there is nothing to prove. So assume the lemma for graphs
with $\leq n-1$ edges. Let $\cal G$ be a graph of groups with $n$ edges and 
satisfying the hypothesis. It follows that $\pi_1({\cal G})\simeq
\pi_1({\cal G}_1)*_F\pi_1({\cal G}_2)$ or $\pi_1({\cal G})\simeq
\pi_1({\cal G}_1)*_F$ where $F$ is a finite group and ${\cal G}_i$ satisfy
the hypothesis of the lemma and have $\leq n-1$ edges. Also note that
$\pi_1({\cal G}_1)*_F\pi_1({\cal G}_2)$ can be embedded as a 
subgroup in $(\pi_1({\cal
G}_1)*\pi_1({\cal G}_2))*_F$. Therefore, by the induction hypothesis and
since a 
free product of $\cal {RF}$ groups is again
$\cal {RF}$ (\cite{Gr}) we only need to prove that, for
finite $H$, $G*_H$
is $\cal {RF}$ if so is $G$. But, this follows from
\cite{BT} or  [\cite{DC}, Theorem 2]. This completes the proof 
of the Lemma.\end{proof}

\begin{lemma} \label{res-ext} Let $1\to K\to G\to H\to 1$ be an extension
of groups so that $K$ and $H$ are ${\cal {RF}}$. Assume that any finite
index
subgroup of $K$ contains a subgroup $K'$ so that $K'$ is
normal in $G$ and $G/K'$ is $\cal {RF}$. 
Then $G$ is $\cal {RF}$.\end{lemma}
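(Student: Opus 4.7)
The plan is to prove residual finiteness of $G$ by showing that every nontrivial $g\in G$ can be separated from the identity in some finite quotient of $G$. I would split the argument into two cases depending on whether $g$ lies in the kernel $K$.

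First consider $g\in G$ with $g\notin K$. Let $\bar g$ denote the image of $g$ in $H$; by assumption $\bar g\neq 1$. Since $H$ is residually finite, there is a finite index normal subgroup $N\triangleleft H$ with $\bar g\notin N$. Its preimage $\pi^{-1}(N)$ (where $\pi:G\to H$) is a finite index normal subgroup of $G$ not containing $g$, which takes care of this case.

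The substantive case is $1\neq g\in K$. Here I would first use the residual finiteness of $K$ to produce a finite index subgroup $L<K$ with $g\notin L$ (we can even take $L$ normal in $K$, but this is not needed). Now the given hypothesis provides a subgroup $K'\subseteq L$ that is normal in all of $G$ and such that $G/K'$ is residually finite. Since $K'\subseteq L$ and $g\notin L$, we have $g\notin K'$, so the image of $g$ in $G/K'$ is nontrivial. Applying residual finiteness of $G/K'$ yields a finite quotient of $G/K'$, and hence of $G$, in which the image of $g$ is nontrivial.

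Combining the two cases shows that for every $1\neq g\in G$ there is a homomorphism from $G$ to a finite group not killing $g$, which is the definition of residual finiteness. There is no real obstacle here: the entire content of the lemma lies in the hypothesis, which is tailored precisely to upgrade a finite index subgroup of $K$ (which need not be normal in $G$) to a subgroup normal in $G$ with residually finite quotient, so that the elementary splitting argument goes through. The only point requiring a moment's thought is verifying that the chosen $K'$ still avoids $g$, which is immediate from $K'\subseteq L$.
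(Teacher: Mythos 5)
Your proof is correct and follows essentially the same two-case argument as the paper: separate $g\notin K$ via a finite quotient of $H$, and separate $1\neq g\in K$ by first choosing a finite index subgroup of $K$ avoiding $g$, then invoking the hypothesis to replace it by a $G$-normal subgroup $K'$ with $G/K'$ residually finite. The only (immaterial) difference is that you conclude the second case directly from residual finiteness of $G/K'$, whereas the paper phrases the final step as an appeal to the first case.
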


\begin{proof} Let $g\in G-K$ and $g'\in H$ be the image of $g$ in $H$.
Since $H$ is ${\cal {RF}}$ there is a finite index subgroup $H'$ of $H$
not
containing $g'$. The inverse image of $H'$ in $G$ is a finite index
subgroup not containing $g$. 

Next, let $g\in K-\{1\}$. Choose a finite index subgroup of $K$ not containing
$g$. By hypothesis there is a finite index subgroup $K'$ of $K$, which is
normal in $G$ and does not contain $g$. Also $G/K'$ is ${\cal {RF}}$. Now, 
applying the previous case we complete the proof.\end{proof}

\begin{lemma} \label{inter} Let $\cal G$ be a finite graph of 
finitely generated $\cal {RF}$  
groups satisfying the intersection property. Assume the following. 
`Given an edge $e$ and an end vertex $v$ of $e$, 
for every subgroup $E$ of ${\cal G}_e$, 
which is normal in ${\cal G}_v$, the quotient ${\cal G}_v/E$ is again $\cal {RF}$.' 
Then $\pi_1({\cal
G})$ is $\cal {RF}$.\end{lemma}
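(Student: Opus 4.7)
I will argue by induction on the number of edges $n$ of $\cal G$, using Lemma \ref{graph-res} to handle graphs whose edge groups are all finite and Lemma \ref{res-ext} to transfer residual finiteness across a normal quotient produced by the intersection property. The base case $n=0$ is immediate since $\pi_1({\cal G})$ is then a vertex group, which is $\cal{RF}$ by hypothesis.

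For the inductive step I first treat the sub-case in which $\cal G$ contains at least one edge $e$ with ${\cal G}_e$ finite. I split $\cal G$ along $e$ (writing $\pi_1({\cal G})$ as an amalgamated product over ${\cal G}_e$ or an $HNN$-extension with stable letter over ${\cal G}_e$, depending on whether $e$ disconnects $\cal G$). The resulting one or two subgraphs of groups have strictly fewer edges and, being connected subgraphs of $\cal G$, inherit both the intersection property and the stated quotient hypothesis. Hence the inductive hypothesis applies to them, and then Lemma \ref{graph-res} yields the residual finiteness of $\pi_1({\cal G})$.

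The main case is when every edge group of $\cal G$ is infinite. Applying the intersection property to $\cal G$ itself produces a subgroup $C \subseteq \cap_{e\in E_{\cal G}}{\cal G}_e$ that is normal in $\pi_1({\cal G})$ and of finite index in some edge group ${\cal G}_{e_0}$. Since ${\cal G}_{e_0}$ is infinite, $C$ is an infinite finitely generated $\cal{RF}$ group. Quotienting by $C$ yields a graph of groups $\bar{\cal G}$ with the same underlying graph, vertex groups $\bar{\cal G}_v={\cal G}_v/C$ (which are $\cal{RF}$ by the stated quotient hypothesis applied to $E=C$), and edge groups $\bar{\cal G}_e={\cal G}_e/C$; in particular $\bar{\cal G}_{e_0}$ is finite. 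A routine check confirms that $\bar{\cal G}$ again satisfies the intersection property and the quotient hypothesis, the latter because the preimage in ${\cal G}_e$ of a subgroup of $\bar{\cal G}_e$ normal in $\bar{\cal G}_v$ is a subgroup of ${\cal G}_e$ containing $C$ and normal in ${\cal G}_v$. Splitting $\bar{\cal G}$ along the finite edge $e_0$ now produces subgraphs with strictly fewer than $n$ edges, so the inductive hypothesis applies to them and Lemma \ref{graph-res} gives that $\pi_1(\bar{\cal G})=\pi_1({\cal G})/C$ is $\cal{RF}$.

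To conclude via Lemma \ref{res-ext} applied to $1\to C\to \pi_1({\cal G})\to \pi_1({\cal G})/C\to 1$, it remains to verify that every finite-index subgroup $H$ of $C$ contains some $C'$ which is normal in $\pi_1({\cal G})$ with $\pi_1({\cal G})/C'$ again $\cal{RF}$. Since $C$ is finitely generated and $\cal{RF}$, $H$ contains a characteristic subgroup $C'\leq C$ of finite index; being characteristic in the normal subgroup $C$, $C'$ is normal in $\pi_1({\cal G})$. Then $\pi_1({\cal G})/C'$ is the fundamental group of a graph of groups with the same underlying graph as $\cal G$, vertex groups ${\cal G}_v/C'$ (again $\cal{RF}$ by the quotient hypothesis) and with ${\cal G}_{e_0}/C'$ finite, satisfying the same inherited hypotheses; exactly the same splitting argument as above shows $\pi_1({\cal G})/C'$ is $\cal{RF}$. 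Lemma \ref{res-ext} then delivers residual finiteness of $\pi_1({\cal G})$. The main obstacle I anticipate is the careful bookkeeping needed to confirm that the intersection and quotient hypotheses genuinely descend to $\bar{\cal G}$ and to the subgraphs obtained by splitting along a finite edge, and to ensure the induction makes strict progress only through the splitting step after the intersection property has been used to create a finite edge.
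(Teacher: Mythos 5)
Your proposal is correct and follows essentially the same route as the paper's proof: induction on the number of edges, reduction of finite-edge cases via Lemma \ref{graph-res}, use of the intersection property to produce a normal subgroup $C$ contained in all edge groups and of finite index in one of them, residual finiteness of $\pi_1({\cal G})/C$ via the resulting finite edge group, and finally Lemma \ref{res-ext} together with the characteristic-subgroup trick for finite-index subgroups of $C$. The only difference is presentational: you make explicit the splitting along the newly finite edge of the quotient graph and the verification that the hypotheses descend, which the paper leaves implicit.
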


\begin{proof} Using Lemma \ref{graph-res} we can assume that all the edge
groups of $\cal G$ are infinite. Now, the proof is by induction on the number
of edges of the graph $\cal G$. Clearly the induction starts because 
if there is no edge then the lemma is true.
Assume the result for all graphs satisfying the hypothesis with
number of edges $\leq n-1$ and let $\cal G$ be a graph of groups with $n$
number of edges and satisfying the hypothesis of the lemma. By the 
intersection property 
there is a normal subgroup $K$, contained 
in all the edge groups, of $\pi_1({\cal G})$, which is of finite index in
some edge group. Hence, we have the following. 

{\bf K.} 
$\pi_1({\cal G})/K$ is isomorphic to the fundamental group of a finite
graph of $\cal {RF}$ groups (by hypothesis the quotient of a vertex group 
by $K$ is $\cal {RF}$) with $n$ edges and some edge 
group is finite. Also it is easily seen that this 
quotient graph of groups has the intersection property. 

By the induction hypothesis and by Lemma \ref{graph-res} $\pi_1({\cal
G})/K$ is $\cal {RF}$. 

Now, we would like to apply Lemma \ref{res-ext} to the exact sequence

$$1\to K\to \pi_1({\cal G})\to \pi_1({\cal G})/K\to 1.$$

Let $H$ be a finite index subgroup of $K$. Since $K$ is finitely
generated (being of finite index in a finitely generated group) we can
find a finite index characteristic subgroup $H'$ of $K$ contained in $H$.
Hence, $H'$ is normal in $\pi_1({\cal G})$. It
 is now easy to see that {\bf K} is satisfied if we replace $K$ by $H'$.
Hence, $\pi_1({\cal G})/H'$ is $\cal {RF}$.

Therefore, by Lemma \ref{res-ext} $\pi_1({\cal G})$ is $\cal {RF}$.
\end{proof}

\begin{lemma} \label{rfcyclic} Let $\cal G$ be a finite graph of
virtually cyclic groups
so that either the edge groups are finite or the 
infinite vertex groups are abelian and 
the associated graph is almost a tree. Then $\pi_1({\cal 
G})$ is ${\cal {RF}}$.\end{lemma}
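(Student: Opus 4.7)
The argument splits along the two branches of the hypothesis. In the first case all edge groups are finite and all vertex groups are virtually cyclic, hence residually finite, so Lemma \ref{graph-res} applies directly and $\pi_1({\cal G})$ is residually finite.

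For the second case (infinite vertex groups abelian and $\cal G$ almost a tree), the plan is to first invoke Lemma \ref{almost} to replace $\cal G$ by an isomorphic graph of groups $\cal H$ in which the edge groups are finite (or $\cal H$ has no edges) and each vertex group has the form $\pi_1({\cal K})$, with $\cal K$ a maximal tree subgraph of $\cal G$ whose edge groups are all infinite. By Lemma \ref{graph-res}, the problem reduces to showing each such $\pi_1({\cal K})$ is residually finite.

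Fix such a $\cal K$. If $\cal K$ has no edges then $\pi_1({\cal K})$ is a single virtually cyclic group and is clearly residually finite, so assume $\cal K$ has at least one edge. Then every vertex group of $\cal K$ must be infinite, because a finite group cannot contain an infinite edge group; by hypothesis each such vertex group is therefore a finitely generated abelian group of rank $1$. Any infinite subgroup of a rank $1$ finitely generated abelian group has finite index, so every edge group of $\cal K$ is of finite index in the adjoining vertex groups. Consequently $\cal K$ satisfies the hypotheses of Lemma \ref{ic} with $r=1$, producing a rank $1$ free abelian normal subgroup $C\cong{\Bbb Z}$ of $\pi_1({\cal K})$, contained in every edge group, such that $\pi_1({\cal K})/C$ is isomorphic to the fundamental group of a finite graph of finite groups. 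By Lemma \ref{finitefree} this quotient contains a free subgroup of finite index, and in particular is residually finite.

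To close the loop I would apply Lemma \ref{res-ext} to the extension $1\to C\to \pi_1({\cal K})\to \pi_1({\cal K})/C\to 1$. Any finite index subgroup of $C\cong{\Bbb Z}$ contains the characteristic subgroup $n{\Bbb Z}$, which is therefore normal in $\pi_1({\cal K})$, so the remaining and principal technical point is residual finiteness of the quotient $\pi_1({\cal K})/n{\Bbb Z}$. This group sits in an extension $1\to{\Bbb Z}/n{\Bbb Z}\to \pi_1({\cal K})/n{\Bbb Z}\to \pi_1({\cal K})/C\to 1$ whose base is virtually free; since $\mathrm{Aut}({\Bbb Z}/n{\Bbb Z})$ is finite, the preimage of a suitably small finite index free subgroup of $\pi_1({\cal K})/C$ on which the conjugation action on ${\Bbb Z}/n{\Bbb Z}$ is trivial gives a finite index subgroup which is a central extension of a free group by a finite abelian group. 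Such an extension splits because $H^2$ of a free group with any coefficients vanishes, so it is isomorphic to a direct product of a free group with a finite group, which is residually finite. Residual finiteness of $\pi_1({\cal K})/n{\Bbb Z}$ then follows because residual finiteness passes to finite index overgroups, and Lemma \ref{res-ext} yields residual finiteness of $\pi_1({\cal K})$, completing the second case.
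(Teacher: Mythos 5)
Your argument is correct, and up to the last step it follows the same path as the paper: the finite-edge case is Lemma \ref{graph-res} directly, and in the second case you reduce via Lemma \ref{almost} and Lemma \ref{graph-res} to a maximal tree subgraph $\cal K$ with infinite (hence rank-one finitely generated abelian) vertex and edge groups, then apply Lemma \ref{ic} with $r=1$ to obtain a normal subgroup $C\cong{\Bbb Z}$ contained in all edge groups with $\pi_1({\cal K})/C$ virtually free. Where you genuinely diverge is in how residual finiteness is then extracted. The paper notes that Lemma \ref{ic} gives the intersection property and simply quotes Lemma \ref{inter}, whose additional hypothesis holds here because every quotient of a virtually cyclic group is virtually cyclic and hence residually finite. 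You instead verify the hypothesis of Lemma \ref{res-ext} by hand: every finite index subgroup of $C$ contains the characteristic subgroup $n{\Bbb Z}$, and $\pi_1({\cal K})/n{\Bbb Z}$ is residually finite because the preimage of a finite index free subgroup of the virtually free base acting trivially on ${\Bbb Z}/n{\Bbb Z}$ is a central extension of a free group by a finite abelian group, which splits since $H^2$ of a free group vanishes, giving a residually finite direct product of finite index. This is sound and more self-contained; it avoids the inductive machinery of Lemma \ref{inter} and makes the extension structure of $\pi_1({\cal K})$ completely explicit, at the cost of redoing in this special rank-one case what Lemma \ref{inter} packages once and for all for graphs with the intersection property.
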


\begin{proof} Applying Lemma \ref{graph-res} and using the 
definition of almost a graph of groups we can assume 
that the graph is a tree and all the edge groups are infinite. 
Now, using Lemma \ref{ic} we see that the hypothesis of Lemma \ref{inter} is
satisfied. This proves the Lemma.\end{proof}  

\begin{lemma} \label{res-surface} Let $\cal G$ be a finite graph of groups
whose 
vertex and edge groups are fundamental groups of closed 
surfaces of genus $\geq 2$. Also 
assume that the intersection of the edge groups contains a subgroup 
$C$ (say), which 
is normal in $\pi_1({\cal G})$ and is of finite index in some edge group. Then 
$\pi_1({\cal G})$ contains a normal subgroup isomorphic to 
the fundamental group of a closed surface so that the
quotient is isomorphic to the fundamental group of a finite-type  
graph of 
groups. Also
$\pi_1({\cal G})$ is $\cal {RF}$.\end{lemma}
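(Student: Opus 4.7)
The plan is to show that the hypothesized subgroup $C$ itself serves as the claimed normal surface subgroup. The key step is to upgrade the hypothesis that $C$ has finite index in \emph{some} edge group to the assertion that $C$ has finite index in \emph{every} vertex group (and hence every edge group). First note that, being normal in $\pi_1({\cal G})$ and contained in each edge group ${\cal G}_e$, $C$ is automatically normal in every edge group ${\cal G}_e$ and in every vertex group ${\cal G}_v$; and since $C$ has finite index in some closed surface group of genus $\geq 2$, $C$ itself is a closed surface group of genus $\geq 2$, in particular finitely generated and nontrivial.

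The crucial ingredient is the classical fact that a nontrivial finitely generated normal subgroup of a closed surface group of genus $\geq 2$ has finite index. Applying this to $C \triangleleft {\cal G}_v$ and $C \triangleleft {\cal G}_e$ for every vertex $v$ and edge $e$ yields $[{\cal G}_v : C]<\infty$ and $[{\cal G}_e : C]<\infty$. The induced maps ${\cal G}_e/C \to {\cal G}_v/C$ remain injective (the preimage of $C$ under ${\cal G}_e \hookrightarrow {\cal G}_v$ is just $C$), so $\pi_1({\cal G})/C$ is isomorphic to the fundamental group of a graph of groups with the same underlying graph as $\cal G$ and with finite vertex groups ${\cal G}_v/C$ and finite edge groups ${\cal G}_e/C$; this is a finite-type graph of groups in the sense of Definition \ref{def1.1}, giving the first conclusion.

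For residual finiteness, I would apply Lemma \ref{res-ext} to the exact sequence $1 \to C \to \pi_1({\cal G}) \to \pi_1({\cal G})/C \to 1$. The kernel $C$ is a surface group and hence residually finite; the quotient, being the fundamental group of a finite graph of finite groups, is residually finite by Lemma \ref{graph-res}. Given any finite-index subgroup of $C$, I would choose inside it a characteristic subgroup $K'$ of $C$ of finite index, using that $C$ is finitely generated (so the intersection of all subgroups of bounded index is a characteristic subgroup of finite index). Then $K'$ is normal in $\pi_1({\cal G})$ (characteristic in the normal subgroup $C$) and has finite index in every vertex and edge group, so $\pi_1({\cal G})/K'$ is again the fundamental group of a finite graph of finite groups and is residually finite by Lemma \ref{graph-res}. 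This verifies the remaining hypothesis of Lemma \ref{res-ext} and completes the proof. The main obstacle is the cited surface-group fact, which can be obtained from the observation that an infinite-index cover of a closed hyperbolic surface is noncompact with free fundamental group, combined with a Poincar\'e duality or centerlessness argument excluding a nontrivial finitely generated free group from occurring as a normal subgroup of a surface group of genus $\geq 2$.
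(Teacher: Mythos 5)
Your proof is correct, and its overall skeleton --- pass to the quotient $\pi_1({\cal G})/C$, show it is the fundamental group of a finite-type graph of groups, then obtain residual finiteness from Lemma \ref{graph-res} and Lemma \ref{res-ext} applied to $1\to C\to\pi_1({\cal G})\to\pi_1({\cal G})/C\to 1$ --- is exactly the paper's. Where you genuinely diverge is in the key finite-index step. The paper never uses normality of $C$ in the individual vertex groups there: it first observes, via the elementary covering-space fact that a closed-surface subgroup of a closed surface group must have finite index, that each edge group has finite index in its end vertex groups, and then invokes Lemma \ref{fi}, whose induction on edges propagates the hypothesis that $C$ has finite index in \emph{some} edge group along the connected graph to all vertex groups. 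You instead note that $C$ is a nontrivial finitely generated normal subgroup of each genus $\geq 2$ vertex group and apply Greenberg's theorem (a finitely generated normal subgroup of a Fuchsian group is finite or of finite index) to get $[{\cal G}_v:C]<\infty$ in one stroke, bypassing Lemma \ref{fi} entirely. Both routes work; yours is shorter but leans on a deeper theorem, and your closing sketch of that theorem is its weakest point --- ``centerlessness'' or Poincar\'e duality alone does not exclude a nontrivial finitely generated free normal subgroup (the standard arguments use limit sets and quasiconvexity, or Euler characteristics of intermediate covers, or Greenberg's original argument), so you should cite the result rather than sketch it. The paper's route has the mild advantage of needing only the covering-space dichotomy plus combinatorics, and Lemma \ref{fi} is reused elsewhere. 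On the other hand, your explicit verification of the hypothesis of Lemma \ref{res-ext} by choosing a characteristic finite-index subgroup of $C$ inside a given finite-index subgroup is precisely the detail the paper leaves as ``easy to check,'' and it is worth having written out.
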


\begin{proof} We need the
following Lemma.

\begin{lemma} Let $S$ be a closed surface.
Let $G$ be a subgroup of $\pi_1(S)$. Then $G$ is isomorphic to the
fundamental group of a closed surface if and only if $G$ is of 
finite index in $\pi_1(S)$.\end{lemma}

\begin{proof} The proof follows from covering space theory.\end{proof}

Therefore, using the above lemma we get that the edge groups 
of $\cal G$ are of finite index in the
end vertex groups of the corresponding edges. Hence, by 
Lemma \ref{fi} $\pi_1({\cal G})/C$
is the fundamental group of a finite-type graph of groups.
This proves the first statement. 

Now, 
by Lemma \ref{graph-res} $\pi_1({\cal G})/C$ is $\cal {RF}$. Next, by
\cite{GB1}, closed surface groups are $\cal {RF}$. Then using the 
above lemma, it is
easy to check that the hypothesis of Lemma \ref{res-ext} is satisfied for
the following exact sequence. $$1\to C\to \pi_1({\cal G})\to \pi_1({\cal
G})/C\to 1.$$

Hence, $\pi_1({\cal G})$ is $\cal {RF}$.
\end{proof}

\subsection{Basic results on the isomorphism conjecture}

$\cal C$ is always a class of groups closed under isomorphisms and 
taking subgroups unless otherwise mentioned.

We start by noting that if the FIC$ _{\cal C}$ is 
true for a group $G$ then the FIC$ _{\cal C}$ is also true 
for any subgroup $H$ 
of $G$. We will refer to this fact as the {\it hereditary property} in this 
paper.

By the Algebraic lemma in \cite{FR} if $G$ is a 
normal subgroup of $K$ then $K$ can be embedded in 
the wreath product $G\wr (K/G)$. We will be using this fact throughout the 
paper without explicitly mentioning it. 

\begin{lemma} \label{finiteindex} If the FICwF$ _{\cal 
C}(G)$ is satisfied 
then the FICwF$ _{\cal C}(L)$ is also satisfied for any 
subgroup $L$ of $G$. 
\end{lemma}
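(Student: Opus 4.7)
The plan is to unpack the definition of FICwF$_{\cal C}$ and then reduce to the hereditary property. By Definition \ref{definition}, FICwF$_{\cal C}(L)$ means that FIC$_{\cal C}$ holds for $L \wr H$ for every finite group $H$; so I will fix such an $H$ and show that FIC$_{\cal C}(L \wr H)$ is satisfied.

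The key observation is that the inclusion $L \hookrightarrow G$ induces an inclusion $L^H \hookrightarrow G^H$ of the direct sums indexed by $H$, which is equivariant with respect to the regular $H$-actions. Taking semidirect products with $H$ gives an injection
\[
L \wr H = L^H \rtimes H \;\hookrightarrow\; G^H \rtimes H = G \wr H,
\]
exhibiting $L \wr H$ as a subgroup of $G \wr H$. Since FICwF$_{\cal C}(G)$ is assumed to hold, FIC$_{\cal C}$ is true for $G \wr H$.

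Now I would invoke the hereditary property noted at the start of this subsection (if FIC$_{\cal C}$ holds for a group, it holds for every subgroup) to conclude that FIC$_{\cal C}$ is true for $L \wr H$. Since $H$ was an arbitrary finite group, this shows FICwF$_{\cal C}(L)$ is satisfied, completing the argument. There is essentially no obstacle here: the statement is a direct consequence of the definitions together with the hereditary property, once one notes the naturality of the wreath product construction in the first factor.
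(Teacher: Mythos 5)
Your proof is correct and is essentially identical to the paper's: both observe that $L\wr H$ embeds in $G\wr H$ and then invoke the hereditary property of the FIC$_{\cal C}$. You have merely spelled out the embedding $L^H\rtimes H\hookrightarrow G^H\rtimes H$ more explicitly than the paper does.
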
 

\begin{proof} Note that given a group $H$, $L\wr H$ is a subgroup of $G\wr H$. Now, use the 
hereditary property of the FIC$ _{\cal
C}$.
\end{proof}

\begin{prop} \label{proposition} Assume that 
${\cal H}^-_*$ is continuous. Let $G$ be a group and 
$G=\cup_{i\in I}G_i$ where the $G_i$'s are an increasing sequence of subgroups of 
$G$ so that the FIC$ _{\cal C}(G_i)$ is satisfied for  
$i\in I$. Then the FIC$ _{\cal C}(G)$ is also 
satisfied. And if the FICwF$ _{\cal C}(G_i)$ is 
satisfied for 
$i\in I$ then the FICwF$ _{\cal C}(G)$ is also
satisfied.\end{prop}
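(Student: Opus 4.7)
The plan is to identify both sides of the assembly map for $G$ as filtered colimits of the corresponding sides for the subgroups $G_i$, and then invoke the fact that a filtered colimit of isomorphisms of $R$-modules is an isomorphism.

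First I would verify that the family ${\cal C}(G_i)$ is exactly the pullback of ${\cal C}(G)$ along the inclusion $\iota_i : G_i \hookrightarrow G$, which is immediate since $\cal C$ is closed under taking subgroups. Consequently, the restriction of the $G$-CW complex $E_{{\cal C}(G)}(G)$ along $\iota_i$ gives a model for $E_{{\cal C}(G_i)}(G_i)$: the fixed-point characterization (contractible if and only if the stabilizer lies in the family) passes through restriction verbatim. Next, I would apply the continuity hypothesis on ${\cal H}^-_*$ (in the sense of [\cite{BL}, Definition 3.1]), which asserts compatibility of ${\cal H}^-_*$ with directed colimits of groups. On the right-hand side of the assembly map this gives
$${\cal H}^G_n(\mathrm{pt}) \;\cong\; \varinjlim_i {\cal H}^{G_i}_n(\mathrm{pt}).$$
On the left-hand side I would write $E_{{\cal C}(G)}(G)$ as the union of its finite $G$-CW subcomplexes; each such subcomplex involves only finitely many stabilizers, every one of which lies in ${\cal C}(G)$ and hence in ${\cal C}(G_{i_0})$ for some $i_0$. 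Combining this with the induction structure of ${\cal H}^-_*$ and continuity yields
$${\cal H}^G_n(E_{{\cal C}(G)}(G)) \;\cong\; \varinjlim_i {\cal H}^{G_i}_n(E_{{\cal C}(G_i)}(G_i)).$$
Since the assembly maps fit naturally into this colimit system, the assembly map for $G$ is the filtered colimit of the assembly maps for $G_i$. As each of the latter is an isomorphism by hypothesis, FIC$_{\cal C}(G)$ follows.

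For the FICwF$_{\cal C}$ statement, I would observe that for any finite group $H$ the wreath product decomposes as $G \wr H = \bigcup_i G_i \wr H$, a directed union of subgroups: this uses that $G^H = \varinjlim_i G_i^H$ when $H$ is finite, so the semidirect products with $H$ also form a directed union. Applying the first part of the proposition to this directed union, together with the hypothesis that FIC$_{\cal C}(G_i \wr H)$ holds for every $i$ and every finite $H$, yields FIC$_{\cal C}(G \wr H)$, which is precisely FICwF$_{\cal C}(G)$.

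The step I expect to require the most care is the colimit identification for the left-hand side of the assembly map, namely expressing ${\cal H}^G_n(E_{{\cal C}(G)}(G))$ as the colimit of ${\cal H}^{G_i}_n(E_{{\cal C}(G_i)}(G_i))$. Continuity, as formulated, applies directly to points or orbits; lifting it to a genuine classifying space requires a filtration of $E_{{\cal C}(G)}(G)$ by finite $G$-CW subcomplexes together with Mayer-Vietoris/cellular arguments and the induction structure intrinsic to an equivariant homology theory. Once this identification is in hand, the remaining assertions (the wreath-product reduction and the fact that a filtered colimit of isomorphisms is an isomorphism) are formal.
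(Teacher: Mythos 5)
The paper itself proves the first assertion by simply citing [\cite{BL}, Proposition 3.4] and the second by the single observation that $G\wr H=\cup_{i\in I}(G_i\wr H)$ for finite $H$; your wreath-product step is therefore identical to the paper's, and your colimit argument is in substance a sketch of the proof of the cited result rather than a different route.

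There is, however, a genuine gap in what your argument establishes for the first assertion. In this paper FIC$_{\cal C}(G)$ is the \emph{fibered} conjecture: one must verify the isomorphism conjecture for the pair $(K,\phi^*{\cal C}(G))$ for \emph{every} homomorphism $\phi\colon K\to G$, not only for $(G,{\cal C}(G))$ itself. Your colimit identification of the assembly map for $G$ with the colimit of the assembly maps for the $G_i$ proves only the unfibered statement. The repair is routine but must be said: given $\phi\colon K\to G$, set $K_i=\phi^{-1}(G_i)$, so that $K=\cup_i K_i$ is again an increasing union; the fibered hypothesis on $G_i$ applied to $\phi|_{K_i}$ gives the isomorphism conjecture for $(K_i,\phi^*{\cal C}(G)|_{K_i})$, and one then reruns the colimit argument for $K$. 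Note that this forces the colimit lemma to be formulated for arbitrary pullback families on $K$, not just families of the form ${\cal C}(\cdot)$ attached to a subgroup-closed class (your fixed-point identification of the restricted classifying space goes through verbatim in that generality, but the statement should be made at that level). A secondary caveat: your assertion that every stabilizer occurring in a finite subcomplex lies in ${\cal C}(G_{i_0})$ for some $i_0$ presumes the members of ${\cal C}$ are finitely generated (true for ${\cal {VC}}$ and ${\cal {FIN}}$, not for a general ${\cal C}$); the argument of [\cite{BL}, Proposition 3.4] instead uses the induction structure on orbits $G/S$ together with continuity applied to $S=\cup_i(S\cap G_i)$, which avoids this assumption. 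With these amendments the proof is correct and coincides with the one the paper invokes.
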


\begin{proof} The first assertion is the same as the conclusion of 
[\cite{BL}, Proposition 3.4]  
and the second one is easily deducible from it, since given a group $H$, 
$G\wr H=\cup_{i\in I}(G_i\wr H)$.\end{proof}

\begin{rem} \label{finitegraph} {\rm 
Since the fundamental group of an infinite graph of groups can be written 
as an increasing union of fundamental groups of finite subgraphs, 
throughout rest of the paper we consider only finite graphs. The infinite case 
will then follow if the corresponding equivariant homology theory satisfies  
the assumption of Proposition \ref{proposition}. Examples of such 
equivariant homology theories appear in the 
isomorphism conjecture for pseudo-isotopy theory, $L$-theory and 
for $K$-theory. See [\cite{FL}, Theorem 7.1].}\end{rem} 

The following lemma from \cite{BL} is crucial for proofs of the
results in this paper. This result in the context of the 
original fibered isomorphism  conjecture (\cite{FJ}) 
was proved in [\cite{FJ}, Proposition 2.2].

\begin{lemma}\label{inverse} ([\cite{BL}, Lemma 2.5]) Let $\cal C$ be also 
closed under taking quotients. Let $p:G\to Q$ 
be a surjective group homomorphism and assume that 
the FIC$_{\cal C}$ is true for $Q$ and for $p$. 
Then $G$ 
satisfies the FIC$ _{\cal C}$.\end{lemma}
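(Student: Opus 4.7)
The plan is to deduce the fibered conjecture for $G$ from that for $Q$ together with the hypothesis on $p$ by means of a transitivity argument. Given any homomorphism $\phi\colon K\to G$, form the composition $p\circ\phi\colon K\to Q$. Because FIC$_{\cal C}(Q)$ holds in the fibered sense, applying it to $p\circ\phi$ yields the (unfibered) isomorphism conjecture for the pair $(K,(p\circ\phi)^*{\cal C}(Q))$. Since $\cal C$ is closed under taking quotients, a subgroup $L<K$ with $\phi(L)\in{\cal C}$ automatically satisfies $p(\phi(L))\in{\cal C}$, so $\phi^*{\cal C}(G)\subset (p\circ\phi)^*{\cal C}(Q)$. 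Our task therefore reduces to upgrading the isomorphism conjecture from the larger family to the smaller one.

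This upgrading is exactly what the transitivity principle for equivariant homology theories provides: if ${\cal F}\subset{\cal G}$ are families of subgroups of $K$, if the isomorphism conjecture holds for $(K,{\cal G})$, and if for each $H\in{\cal G}$ the isomorphism conjecture holds for $(H,{\cal F}\cap H)$, then it holds for $(K,{\cal F})$. To apply this, fix $H\in(p\circ\phi)^*{\cal C}(Q)$, so that $p(\phi(H))\in{\cal C}$, i.e., $\phi(H)\in p^*{\cal C}(Q)$. By the hypothesis that FIC$_{\cal C}$ is true for $p$, the fibered conjecture holds for $\phi(H)$. Applying it to the restricted homomorphism $\phi|_H\colon H\to\phi(H)$ gives the isomorphism conjecture for $(H,(\phi|_H)^*{\cal C}(\phi(H)))$. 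A direct check shows that $(\phi|_H)^*{\cal C}(\phi(H))=\phi^*{\cal C}(G)\cap H$, which is exactly the family required by the transitivity principle.

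The main obstacle is the transitivity principle itself, which is where the genuine work lives; every other step is formal bookkeeping. For the generic equivariant homology theory ${\cal H}_*^-$ under consideration the principle is already available in \cite{BL}, so the remaining argument consists only of the two observations above: that $\cal C$ being closed under quotients is what makes $\phi^*{\cal C}(G)$ sit inside $(p\circ\phi)^*{\cal C}(Q)$, and that the assumption ``FIC$_{\cal C}$ is true for $p$'' is precisely what is needed to verify the hypothesis of transitivity at each stabilizer. Combining these, one concludes the isomorphism conjecture for $(K,\phi^*{\cal C}(G))$ for arbitrary $\phi\colon K\to G$, which is the fibered conjecture for $G$.
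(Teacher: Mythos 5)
Your argument is correct and coincides with the proof of the cited source: the paper gives no proof of this lemma but defers to [\cite{BL}, Lemma 2.5], whose argument is exactly yours --- compose to get the isomorphism conjecture for $(K,(p\circ\phi)^*{\cal C}(Q))$, use closure of ${\cal C}$ under quotients to see $\phi^*{\cal C}(G)\subset(p\circ\phi)^*{\cal C}(Q)$, and then invoke the Transitivity Principle, with the hypothesis that the FIC$_{\cal C}$ holds for $p$ supplying the required input at each $H$ with $p(\phi(H))\in{\cal C}$. The identification $(\phi|_H)^*{\cal C}(\phi(H))=\phi^*{\cal C}(G)\cap H$ and the reduction to the Transitivity Principle are both as in that reference (and in the original [\cite{FJ}, Proposition 2.2]), so there is nothing to add.
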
 

\begin{prop} \label{product} Let $\cal C$ be as in the 
statement of the above lemma. Assume that ${\cal P}_{\cal 
C}$ is 
satisfied. 

(1). If the FIC$ _{\cal C}$ is true 
for $G_1$ and $G_2$ then the FIC$ _{\cal C}$ is true for $G_1\times G_2$. 
And, the same statement also holds for 
the FICwF$_{\cal C}$. 

(2). Let $G$ be a finite index normal subgroup of a group $K$. If the  
FICwF$_{\cal C}(G)$ is satisfied then the FICwF$ _{\cal 
C}(K)$ is also satisfied.

(3). Let $p:G\to Q$ be a group homomorphism. If 
the FICwF$_{\cal C}$ is true for $Q$ and for $p$ 
then the FICwF$_{\cal C}$ is true for $G$. 
\end{prop}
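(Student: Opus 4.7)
For part $(1)$, I would apply Lemma \ref{inverse} to the first coordinate projection $p_1 : G_1 \times G_2 \twoheadrightarrow G_1$. FIC$_{\cal C}(G_1)$ is given, and to check FIC$_{\cal C}$ for the homomorphism $p_1$ one takes $L \leq G_1 \times G_2$ with $p_1(L) \in {\cal C}$; since $L \leq p_1(L) \times G_2$ the hereditary property reduces matters to FIC$_{\cal C}(C \times G_2)$ for an arbitrary $C \in {\cal C}$. A second application of Lemma \ref{inverse}, now to $C \times G_2 \twoheadrightarrow G_2$, reduces further to FIC$_{\cal C}(C \times C')$ for $C,C' \in {\cal C}$, which is precisely ${\cal P}_{\cal C}$. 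For the FICwF statement I would observe that $(G_1 \times G_2) \wr F$ embeds in $(G_1 \wr F) \times (G_2 \wr F)$ as the fibre product over the common $F$-component, and then combine the FIC case of $(1)$ just proved with the hereditary property.

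For $(2)$ my plan is to double-wreath and reduce everything to direct powers of $G$. By the Algebraic lemma of \cite{FR}, $K$ embeds in $G \wr F_0$ with $F_0 = K/G$ finite, hence $K \wr F \hookrightarrow (G \wr F_0) \wr F$, and one checks directly that $(G \wr F_0) \wr F = G^{F_0 \times F} \rtimes (F_0 \wr F)$. Because $G^{F_0 \times F}$ is normal of finite index there, a second use of the Algebraic lemma places $(G \wr F_0) \wr F$ inside $G^{F_0 \times F} \wr (F_0 \wr F)$. Iterating $(1)$ upgrades FICwF$_{\cal C}(G)$ to FICwF$_{\cal C}(G^n)$ for every $n$, so FIC$_{\cal C}$ holds for $G^{F_0 \times F} \wr (F_0 \wr F)$; hereditary then yields FIC$_{\cal C}(K \wr F)$, and since $F$ was arbitrary this is FICwF$_{\cal C}(K)$.

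For $(3)$ I would fix an arbitrary finite group $F$ and apply Lemma \ref{inverse} to the surjection $p \wr F : G \wr F \twoheadrightarrow Q \wr F$. FIC$_{\cal C}(Q \wr F)$ comes from FICwF$_{\cal C}(Q)$. To verify the hypothesis for $p \wr F$, take $H \leq G \wr F$ with $(p \wr F)(H) \in {\cal C}$ and set $H' = H \cap G^F$, a normal subgroup of finite index in $H$. For each coordinate $i$, the projection $\pi_i((p \wr F)(H'))$ is a quotient of a subgroup of $(p \wr F)(H) \in {\cal C}$, hence belongs to ${\cal C}(Q)$ (using that $\cal C$ is closed under subgroups and quotients). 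The FICwF hypothesis on $p$ then gives FICwF$_{\cal C}$ for $p^{-1}(\pi_i((p \wr F)(H')))$, and since $H'$ sits inside the product of these $|F|$ subgroups of $G$, iteration of $(1)$ delivers FICwF$_{\cal C}(H')$. Thus FIC$_{\cal C}(H' \wr (H/H'))$ holds, and via the Algebraic lemma embedding $H \hookrightarrow H' \wr (H/H')$ the hereditary property supplies FIC$_{\cal C}(H)$.

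The main obstacle lies in $(3)$: one has to control how an arbitrary preimage subgroup $H$ in $G \wr F$ decomposes across the $F$ coordinates of $G^F$ and then reassemble FICwF for $H$ from FICwF for the individual coordinate-wise preimages in $G$. The role of ${\cal P}_{\cal C}$ is essential throughout, as it is what allows $(1)$ to be bootstrapped to finite direct products, and this bootstrapping is the bridge used in both $(2)$ and $(3)$.
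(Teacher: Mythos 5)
Your proposal is correct and follows essentially the same route as the paper: two applications of Lemma \ref{inverse} plus ${\cal P}_{\cal C}$ for $(1)$ together with the embedding $(G_1\times G_2)\wr F < (G_1\wr F)\times (G_2\wr F)$, the chain $K\wr F < (G\wr F_0)\wr F \simeq G^{F_0\times F}\rtimes (F_0\wr F) < G^{F_0\times F}\wr(F_0\wr F)$ via the Algebraic Lemma for $(2)$, and for $(3)$ the reduction of a subgroup of $G\wr F$ with image in $\cal C$ to its finite-index intersection with $G^F$, which embeds in a product of coordinatewise preimages $p^{-1}(L_h)$ handled by $(1)$, $(2)$ and the hereditary property. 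The only cosmetic differences are the order of the projections in $(1)$ and that the paper pushes the chain in $(2)$ one step further into $\Pi\,(G\wr(F_0\wr F))$ instead of invoking FICwF for $G^n$ directly.
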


\begin{proof} The proof of $(1)$ is essentially two applications of Lemma 
\ref{inverse}. First, apply it to the projection $G_1\times G_2\to G_2$. 
Hence, to prove the lemma we need to check that the 
FIC$_{\cal C}$ is true for $G_1\times H$ for 
any $H\in {\cal C}(G_2)$. Now, fix $H\in {\cal C}(G_2)$ and apply Lemma 
\ref{inverse} to the projection $G_1\times H\to G_1$. Thus we need to 
show that the FIC$_{\cal C}$ is true for 
$K\times H$ where $K\in {\cal C}(G_1)$. But this is 
exactly ${\cal P} _{\cal C}$, which is true by 
hypothesis. 

Next, note that given a group $H$, $(G_1\times G_2)\wr H$ is a 
subgroup of $(G_1\wr H)\times (G_2\wr H)$. Therefore, the 
FICwF$_{\cal C}$ is true for $G_1\times G_2$ if it is true for $G_1$ and $G_2$.

For $(2)$ let $H=K/G$. Then $K$ is a 
subgroup of $G\wr H$. Let $L$ be a finite group; then it is easy to check 
that $$K\wr L < (G\wr H)\wr L \simeq G^{H\times L}\rtimes (H\wr L) $$
$$< G^{H\times L}\wr (H\wr L) < \Pi_{{|H\times 
L|}-times}(G\wr (H\wr L)).$$

The isomorphism in the above display follows from [\cite{FL}, Lemma 2.5] with respect to 
some action of $H\wr L$ on $G^{H\times L}$ (replace $X$ by $A$ and 
$Y$ by $B$ in [\cite{FL}, Lemma 2.5]). The second inclusion follows from 
the Algebraic Lemma in \cite{FR}.

Now, using $(1)$ and by hypothesis we complete the proof.

For $(3)$ we need to prove that the 
FIC$_{\cal C}$ is true for $G\wr H$ for any finite group $H$. We now 
apply Lemma \ref{inverse} to the homomorphism $G\wr H\to Q\wr H$. By 
hypothesis the FIC$ _{\cal C}$ is true for $Q\wr H$. So let 
$S\in {\cal C}(Q\wr H)$. We have to prove that the 
FIC$ _{\cal C}$ is true for $p^{-1}(S)$. 
Note that $p^{-1}(S)$ contains 
$p^{-1}(S\cap 
Q^H)$ as a normal subgroup of finite index. Therefore, using $(2)$ it is enough to 
prove the FICwF$ _{\cal C}$ for $p^{-1}(S\cap
Q^H)$. Next, note that $S\cap
Q^H$ is a subgroup of $\Pi_{h\in H}L_h$, where $L_h$ is the image 
of $S\cap Q^H$ under the projection to the $h$-th coordinate 
of $Q^H$, and since $\cal C$ is closed under taking quotient, 
$L_h\in {\cal C}(Q)$. Hence, 
$p^{-1}(S\cap Q^H)$ is a subgroup of $\Pi_{h\in H}p^{-1}(L_h)$. 
Since by hypothesis the 
FICwF$ _{\cal C}$ is true for $p^{-1}(L_h)$ 
for $h\in H$, using 
$(1)$, $(2)$ and 
Lemma \ref{finiteindex} we see 
that the FICwF$ _{\cal C}$ is true for $p^{-1}(S\cap
Q^H)$. This completes the 
proof.\end{proof} 

\begin{cor} \label{finabe} ${\cal P} _{\cal 
{VC}}$ implies that the FIC$ _{\cal {VC}}$ is true for 
finitely generated abelian groups.\end{cor}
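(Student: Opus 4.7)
The plan is to combine three ingredients: the triviality of FIC$_{\cal{VC}}$ for virtually cyclic groups themselves, the structure theorem for finitely generated abelian groups, and the product stability supplied by Proposition \ref{product}(1).

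First I would verify that every $V\in{\cal{VC}}$ already satisfies the FIC$_{\cal{VC}}$. Since $V\in{\cal{VC}}(V)$, a one-point space with trivial $V$-action is a model for $E_{{\cal{VC}}(V)}(V)$, so the projection $p\colon E_{{\cal{VC}}(V)}(V)\to\mathrm{pt}$ is tautologically a $V$-equivalence and the assembly map is an isomorphism. For the fibered version, any homomorphism $\phi\colon K\to V$ has image in $V$, and since every subgroup of a virtually cyclic group is virtually cyclic, $\phi^*{\cal{VC}}(V)$ consists of \emph{all} subgroups of $K$; the same one-point argument then works for the pair $(K,\phi^*{\cal{VC}}(V))$.

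Next I would invoke the classification of finitely generated abelian groups: any such $A$ decomposes as a finite direct product $C_1\times\cdots\times C_k$ with each $C_i$ cyclic (finite or infinite). Each $C_i\in{\cal{VC}}$, so FIC$_{\cal{VC}}(C_i)$ holds by the previous step. Because ${\cal{VC}}$ is closed under quotients---a surjective image of a group with a finite-index cyclic subgroup has the image of that subgroup as a finite-index cyclic subgroup---Proposition \ref{product}(1) (which needs exactly this plus the hypothesis ${\cal P}_{\cal{VC}}$) is applicable, and a straightforward induction on $k$ yields FIC$_{\cal{VC}}(A)$.

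There is no serious obstacle here; the corollary is essentially bookkeeping on top of the product-stability statement. The only subtlety worth flagging is the need for ${\cal{VC}}$ to be closed under quotients so that Lemma \ref{inverse}, invoked inside Proposition \ref{product}(1), is available---and this is immediate from the definition.
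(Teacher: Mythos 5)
Your proposal is correct and follows essentially the same route as the paper, which simply observes that the FIC$_{\cal{VC}}$ holds tautologically for virtually cyclic groups and then applies $(1)$ of Proposition \ref{product} to the cyclic factors of a finitely generated abelian group. The extra details you supply (the one-point model for $E_{{\cal{VC}}(V)}(V)$ and the quotient-closure of ${\cal{VC}}$ needed for Lemma \ref{inverse}) are exactly the facts the paper leaves implicit.
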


\begin{proof} The proof is immediate from $(1)$ of Proposition 
\ref{product} since the FIC$ _{\cal {VC}}$ is true 
for virtually cyclic groups.\end{proof} 

\begin{rem} {\rm In $(2)$ of Proposition \ref{product}, if we assume 
that the FIC$ _{\cal C}(G)$ is satisfied instead of 
the FICwF$_{\cal C}(G)$, then it is not known 
how to deduce the FIC$_{\cal C}(K)$. Even in the case of the 
FIC$_{\cal {VC}}$ and when $G$ is a free group it is open. 
However if 
$G$ is free then the FIC$^{P}(K)$ is satisfied by  
results of Farrell-Jones. See the proof of  
Proposition \ref{graph-finite} for details. Also using a 
recent result (\cite{BL1}) it 
can be shown by the same method that the FIC$^L(K)$ is 
satisfied.}\end{rem}

\begin{prop} \label{integer} $_t{\cal T} _{\cal 
C}$ ($_{wt}{\cal T} _{\cal C}$) implies 
that the FIC$ _{\cal C}$ (FICwF$ _{\cal C}$) 
is true for any free group. And 
if $\cal C$ contains the class of all finite groups then 
$_f{\cal T}_{\cal C}$ ($_{wf}{\cal T}_{\cal C}$) 
implies that the FIC$_{\cal C}$ (FICwF$_{\cal C}$) 
is true for a finitely generated group, 
which contains a free subgroup of finite index.\end{prop}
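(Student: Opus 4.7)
The plan is to realize the group in each case as the fundamental group of an appropriate graph of groups and then invoke the relevant tree-type hypothesis.

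For a free group $F$, Bass-Serre theory (equivalently, the rose decomposition $F \cong \pi_1(\vee^n S^1)$) expresses $F \cong \pi_1({\cal G})$ where $\cal G$ is a graph of groups with all vertex and edge groups trivial. Since $\cal C$ is closed under taking subgroups, the trivial group lies in $\cal C$; hence the vertex groups of $\cal G$ belong to $\cal C$ while the edge groups are trivial. Hypothesis $_t{\cal T}_{\cal C}$ therefore yields FIC$_{\cal C}(F)$, and the parallel hypothesis $_{wt}{\cal T}_{\cal C}$ yields FICwF$_{\cal C}(F)$.

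For the second assertion, let $K$ be a finitely generated group containing a free subgroup of finite index. By the Karrass-Pietrowski-Solitar structure theorem for finitely generated virtually free groups, $K$ is isomorphic to the fundamental group of a finite graph of finite groups $\cal G$. Since $\cal C$ contains all finite groups by assumption, the vertex groups of $\cal G$ lie in $\cal C$ and the edge groups are finite. Therefore $_f{\cal T}_{\cal C}$ (respectively $_{wf}{\cal T}_{\cal C}$) applies to $\cal G$ and delivers FIC$_{\cal C}(K)$ (respectively FICwF$_{\cal C}(K)$).

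The only nontrivial input is the appeal to Karrass-Pietrowski-Solitar to guarantee that the graph of groups representing $K$ can be taken finite: Lemma \ref{finitefree} alone supplies a tree action with finite stabilizers, and hence via Bass-Serre a graph-of-groups decomposition with finite vertex and edge groups, but it does not a priori bound the size of the underlying graph. As an alternative one can express $K$ as a directed union of fundamental groups of finite subgraphs of such a decomposition and conclude via Proposition \ref{proposition}, at the cost of assuming in addition that ${\cal H}^-_*$ is continuous.
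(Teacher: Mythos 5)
Your proposal is correct and follows essentially the same route as the paper, whose entire proof is the citation of Lemma \ref{finitefree}: realize the group as $\pi_1$ of a graph of groups with trivial (resp.\ finite) edge groups and vertex groups in $\cal C$, then invoke $_t{\cal T}_{\cal C}$ (resp.\ $_f{\cal T}_{\cal C}$). Your extra appeal to Karrass--Pietrowski--Solitar, and the fallback via Proposition \ref{proposition}, are both unnecessary here, since Definition \ref{property} places no finiteness restriction on the underlying graph, so the (possibly infinite) graph-of-groups decomposition supplied by Lemma \ref{finitefree} and Bass--Serre theory already suffices.
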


\begin{proof} The proof follows from Lemma \ref{finitefree}.\end{proof}

\begin{cor} \label{freeabelian} $_t{\cal T} _{\cal 
C}$ ($_{wt}{\cal T} _{\cal
C}$) and 
${\cal P} _{\cal C}$ imply that the 
FIC$ _{\cal C}$ (FICwF$ _{\cal C}$) is true for 
finitely generated free abelian groups.\end{cor}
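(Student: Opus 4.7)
The plan is to prove both statements by simultaneous induction on the rank $n$ of the finitely generated free abelian group $\mathbb{Z}^n$, relying almost entirely on results already assembled earlier in the section. The two key ingredients are Proposition \ref{integer}, which packages $_t{\cal T}_{\cal C}$ (resp.\ $_{wt}{\cal T}_{\cal C}$) into the assertion that the FIC$_{\cal C}$ (resp.\ FICwF$_{\cal C}$) holds for every free group, and Proposition \ref{product}(1), which uses the hypothesis ${\cal P}_{\cal C}$ to promote the FIC$_{\cal C}$ (and FICwF$_{\cal C}$) to direct products. Neither step is subtle once the earlier machinery is in place.

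For the base case $n=1$, the group $\mathbb{Z}$ is the free group on one generator, so Proposition \ref{integer} gives the FIC$_{\cal C}$ (and FICwF$_{\cal C}$) for $\mathbb{Z}$. For the inductive step, write $\mathbb{Z}^n \simeq \mathbb{Z} \times \mathbb{Z}^{n-1}$ and apply Proposition \ref{product}(1) to the two factors: the inductive hypothesis furnishes the FIC$_{\cal C}$ (FICwF$_{\cal C}$) for $\mathbb{Z}^{n-1}$, the base case does the same for $\mathbb{Z}$, and ${\cal P}_{\cal C}$ is exactly the auxiliary hypothesis that Proposition \ref{product}(1) needs. This conclusion is the desired FIC$_{\cal C}$ (FICwF$_{\cal C}$) for $\mathbb{Z}^n$.

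Note that the naive alternative of invoking ${\cal P}_{\cal C}$ directly is inadequate, since for $n \geq 3$ the factor $\mathbb{Z}^{n-1}$ does not belong to $\cal{VC}$ and hence does not lie in $\cal C$ in the cases of interest; this is precisely why Proposition \ref{product}(1), rather than the raw property ${\cal P}_{\cal C}$, is needed to push the induction through. Since all the work has been done in the preceding lemmas, there is really no obstacle here; the argument is a clean two-line induction assembling Propositions \ref{integer} and \ref{product}(1).
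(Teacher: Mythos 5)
Your argument is correct and matches the paper's proof exactly: the paper simply states that the corollary follows by combining Proposition \ref{integer} (FIC$_{\cal C}$/FICwF$_{\cal C}$ for free groups, hence for ${\Bbb Z}$) with $(1)$ of Proposition \ref{product} (closure under direct products given ${\cal P}_{\cal C}$), which is precisely the induction you spell out. Your closing remark about why the raw property ${\cal P}_{\cal C}$ alone does not suffice is a fair observation, but the substance of the argument is the same.
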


\begin{proof} The proof is a combination of Proposition \ref{integer} and 
$(1)$ of Proposition \ref{product}.\end{proof}

Let $F^n$ be a finitely generated free group of rank $n$.

\begin{lemma} \label{infin} Assume that the FIC$ _{\cal 
C}$ is true for ${\Bbb Z}^{\infty}\rtimes \l t\r$ 
($F^{\infty}\rtimes \l t\r$) for any countable 
infinitely generated abelian group ${\Bbb Z}^{\infty}$, then the 
FIC$ _{\cal C}$ is true for 
${\Bbb Z}^n\rtimes \l t\r$ ($F^n\rtimes \l t\r$) 
for all 
$n\in {\Bbb N}$. Here all actions of $t$ on the corresponding groups 
are arbitrary. 

And the same holds if we replace the FIC$_{\cal C}$ by 
the FICwF$ _{\cal C}$.\end{lemma}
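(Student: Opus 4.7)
The plan is to reduce the finitely generated case to the infinitely generated case by an embedding argument, and then invoke the hereditary property of the (fibered) isomorphism conjecture. I will handle the abelian case first; the free case is entirely parallel.

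Fix $n \in {\Bbb N}$ and an arbitrary action of $\l t \r$ on ${\Bbb Z}^n$, giving the semidirect product $G := {\Bbb Z}^n \rtimes \l t \r$. I would choose a countable infinitely generated abelian group of the form ${\Bbb Z}^\infty := {\Bbb Z}^n \oplus A$, where $A$ is countable and free abelian of infinite rank, and extend the given $\l t \r$-action on ${\Bbb Z}^n$ to ${\Bbb Z}^\infty$ by letting $t$ act trivially on $A$. With this action, the inclusion ${\Bbb Z}^n \hookrightarrow {\Bbb Z}^\infty$ is $\l t \r$-equivariant, so $G$ embeds as a subgroup of $H := {\Bbb Z}^\infty \rtimes \l t \r$. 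By hypothesis the FIC$_{\cal C}$ is true for $H$, hence by the hereditary property the FIC$_{\cal C}$ is true for $G$. For the free case, write $F^\infty := F^n * A$ where $A$ is a free group of countably infinite rank, extend the given action on $F^n$ by letting $t$ act trivially on $A$, and obtain the corresponding embedding $F^n \rtimes \l t \r \hookrightarrow F^\infty \rtimes \l t \r$.

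For the wreath-product version, observe that if $G_1$ is a subgroup of $G_2$ and $K$ is any group, then $G_1 \wr K$ embeds into $G_2 \wr K$ (since $G_1^K < G_2^K$ equivariantly for the regular action of $K$). Applying this with $K$ any finite group to the embeddings constructed above and invoking the hereditary property of the FIC$_{\cal C}$ in the form of Lemma \ref{finiteindex} gives the FICwF$_{\cal C}$ statement from the FICwF$_{\cal C}$ hypothesis for ${\Bbb Z}^\infty \rtimes \l t \r$ and for $F^\infty \rtimes \l t \r$.

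There is no real obstacle; the only thing to check is that the extended actions are well-defined and that the inclusion of semidirect products is a group-theoretic embedding, both of which are immediate from the fact that $t$ is asked to act trivially on the added factor. I would therefore write the proof as a single short paragraph that constructs the two embeddings and cites the hereditary property together with Lemma \ref{finiteindex}.
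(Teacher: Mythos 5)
Your proposal is correct and is essentially the paper's own argument: the paper's proof is a one-line appeal to the hereditary property and Lemma \ref{finiteindex}, which implicitly rests on exactly the equivariant embeddings ${\Bbb Z}^n\rtimes \l t\r \hookrightarrow ({\Bbb Z}^n\oplus A)\rtimes \l t\r$ and $F^n\rtimes \l t\r \hookrightarrow (F^n*A)\rtimes \l t\r$ that you construct explicitly. You have merely filled in the details the paper leaves to the reader.
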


\begin{proof} The proof is an easy consequence of the 
hereditary property and Lemma \ref{finiteindex}. \end{proof}

\begin{lemma} \label{pc} If the FICwF$ _{\cal 
{VC}}$ is true for $F^{\infty}\rtimes \l t\r$ for any 
action of $t$ on $F^{\infty}$, then ${\cal P}_{\cal {VC}}$ is satisfied.\end{lemma}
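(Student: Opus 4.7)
The plan is to embed $G_1\times G_2$ as a subgroup of $(F^\infty\rtimes\langle t\rangle)\wr F$ for some finite group $F$ and some carefully chosen action of $t$, and then to conclude from the hypothesis together with the hereditary property of the FIC$_{\cal VC}$. The ``wreath with finite'' strength of the hypothesis will be used precisely to allow the pass from the target group to a finite-index overgroup, while the ``for any action'' flexibility will be used to arrange the embedding.

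First I would dispose of the case where $G_1$ or $G_2$ is finite: in that situation $G_1\times G_2$ is itself virtually cyclic and the FIC$_{\cal VC}$ holds automatically. Assuming both are infinite, I would pick normal infinite cyclic subgroups $H_i\triangleleft G_i$ of finite index (take a finite-index cyclic subgroup of $G_i$ and replace it by the intersection of its finitely many conjugates). Then $H_1\times H_2\cong{\Bbb Z}^2$ is a finite-index normal subgroup of $G_1\times G_2$ with finite quotient $F:=(G_1/H_1)\times(G_2/H_2)$, and the Algebraic Lemma of \cite{FR} yields a subgroup inclusion $G_1\times G_2\hookrightarrow{\Bbb Z}^2\wr F$.

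Next I would embed ${\Bbb Z}^2$ into $F^\infty\rtimes\langle t\rangle$ for a suitable action: pick a distinguished free generator $e_1\in F^\infty$ and choose any action of $\langle t\rangle$ on $F^\infty$ that fixes $e_1$; then $\langle e_1,t\rangle\cong{\Bbb Z}^2$ sits as a subgroup of $F^\infty\rtimes\langle t\rangle$. Since this inclusion is equivariant with respect to the permutation action of $F$ on the $|F|$ copies in the wreath product, it extends to an inclusion ${\Bbb Z}^2\wr F\hookrightarrow(F^\infty\rtimes\langle t\rangle)\wr F$. Composing with the previous embedding gives $G_1\times G_2\hookrightarrow(F^\infty\rtimes\langle t\rangle)\wr F$; the hypothesis yields the FIC$_{\cal VC}$ for this enveloping group, and the hereditary property transfers it to $G_1\times G_2$, establishing ${\cal P}_{\cal VC}$. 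The main obstacle is that a free group contains no copy of ${\Bbb Z}^2$, so the second ${\Bbb Z}$-factor must come from the $\langle t\rangle$ summand itself, which is what forces us to invoke the freedom of choice of action on $F^\infty$ to fix a generator; the argument would collapse if the hypothesis only guaranteed the conjecture for a single or a restricted class of actions.
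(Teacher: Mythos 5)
Your proof is correct and follows essentially the same route as the paper: both rest on the observation that ${\Bbb Z}\times{\Bbb Z}$ embeds in $F^{\infty}\rtimes\langle t\rangle$ for a suitable (in the paper, the trivial) action, embed $G_1\times G_2$ into ${\Bbb Z}^2\wr F$ for a finite group $F$ via the Algebraic Lemma of \cite{FR}, and conclude by the hereditary property. The only cosmetic difference is that the paper cites Lemma \ref{last} to obtain the finite-index free abelian normal subgroup of rank $\leq 2$ in $C_1\times C_2$, whereas you build $H_1\times H_2$ directly by intersecting conjugates of cyclic subgroups.
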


\begin{proof} Note that ${\Bbb Z}\times {\Bbb Z}$ is a subgroup of
$F^{\infty}\rtimes_t \l
t\r$, where the suffix `t' denotes the trivial action of $\l t\r$ on
$F^{\infty}$. Hence, the
FICwF$ _{\cal {VC}}$ is true for 
${\Bbb Z}\times {\Bbb Z}$, that is, the FIC$ _{\cal {VC}}$
is true for $({\Bbb Z}\times {\Bbb Z})\wr F$ for any finite group $F$. On
the other hand, for two virtually cyclic groups $C_1$ and $C_2$, the
product $C_1\times C_2$ contains a finite 
index free abelian normal subgroup (say $H$) of rank $\leq 2$ (Lemma \ref{last}), and,
therefore, $C_1\times C_2$ is a subgroup of $H\wr F$ for a finite group
$F$. Using the hereditary property we conclude that 
${\cal P}_{\cal {VC}}$ is satisfied.\end{proof}

\begin{prop} \label{polycyclic} The 
FICwF$^{P}$ is true for any virtually polycyclic group.\end{prop}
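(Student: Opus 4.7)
The plan is to reduce the statement to the theorem of Farrell and Jones (\cite{FJ}) that the fibered isomorphism conjecture in pseudo-isotopy theory holds for every virtually polycyclic group, and then to observe that the class of virtually polycyclic groups is preserved by forming wreath products with finite groups. So no new input from the rest of the paper is needed beyond an elementary closure argument.

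First I would record the closure properties of the class of virtually polycyclic groups: it is closed under subgroups, quotients, finite direct products, and extensions by finite groups. Consequently, for a virtually polycyclic group $G$ and a finite group $H$, the direct power $G^H$ is virtually polycyclic, and $G \wr H = G^H \rtimes H$ is obtained from $G^H$ by a finite extension, hence is again virtually polycyclic.

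Next I would apply the Farrell--Jones theorem to $G \wr H$ itself, obtaining FIC$^P(G \wr H)$ for every finite group $H$. Unwinding Definition \ref{definition}, this is exactly the statement that FICwF$^P$ holds for $G$, which is what we want.

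The only point that needs care is that the Farrell--Jones theorem must genuinely cover virtually polycyclic groups, not just polycyclic ones; this is fine because every virtually polycyclic group embeds as a cocompact lattice in a virtually connected Lie group with solvable identity component, which is precisely the setting handled in \cite{FJ}. If one prefers to avoid this appeal, an alternative route is to first establish FICwF$^P$ for polycyclic groups by the same wreath-product trick (using only the polycyclic case of \cite{FJ} as listed in the remark following Corollary \ref{onend}) and then promote the result to all virtually polycyclic groups via Proposition \ref{product}$(2)$, applied to a polycyclic normal subgroup of finite index.
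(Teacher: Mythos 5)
Your proposal is correct and is essentially the paper's own argument: the paper likewise quotes [\cite{FJ}, Proposition 2.4] (which gives the FIC$^P$ for all virtually poly-infinite cyclic groups, a class that coincides with the virtually polycyclic groups) and then observes that $G\wr H$ remains in that class for finite $H$, so the conjecture for $G\wr H$ is immediate. The closure properties you list and your fallback route via Proposition \ref{product}$(2)$ are fine but add nothing beyond what the paper's one-line closure observation already supplies.
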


\begin{proof} By [\cite{FJ}, Proposition 2.4] the FIC$^{P}$ is true 
for any virtually poly-infinite cyclic group. Also a 
polycyclic group is virtually poly-infinite cyclic. 
Now, it is easy to check that the wreath product of a virtually 
polycyclic group with a finite group is virtually poly-infinite cyclic. This 
completes the proof.\end{proof}

Since the product of two virtually cyclic groups is 
virtually polycyclic, an immediate corollary is the following. 
We give a proof of the corollary which is independent of 
Proposition \ref{polycyclic}.

\begin{cor}\label{PVC} ${\cal 
P}^{P}$ is satisfied. Also FICwF$^{P}$ is true for 
any virtually cyclic group.\end{cor}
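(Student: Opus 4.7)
The plan is to derive both assertions of Corollary \ref{PVC} directly from [FJ, Proposition 2.4], which guarantees the FIC$^{P}$ for every virtually poly-infinite cyclic group. The idea is that the groups we need to control, namely $G_{1}\times G_{2}$ for virtually cyclic $G_{i}$ and $C\wr H$ for $C$ virtually cyclic and $H$ finite, all lie inside this class, so no further analysis of the pseudo-isotopy assembly map is needed and the appeal to Proposition \ref{polycyclic} can be avoided.

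For ${\cal P}^{P}$ I would take $G_{1}, G_{2}$ virtually cyclic and choose cyclic subgroups $Z_{i}\leq G_{i}$ of finite index. Then $Z_{1}\times Z_{2}$ is finitely generated abelian of rank at most two, hence virtually free abelian, and sits as a finite-index subgroup of $G_{1}\times G_{2}$. Thus $G_{1}\times G_{2}$ is virtually poly-infinite cyclic, and [FJ, Prop 2.4] gives the FIC$^{P}$ for $G_{1}\times G_{2}$.

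For the FICwF$^{P}$ claim, fix a virtually cyclic group $C$ and a finite group $H$; I must verify the FIC$^{P}$ for $C\wr H = C^{H}\rtimes H$. Pick a cyclic subgroup $Z\leq C$ of finite index; then $Z^{H}$ has index $[C:Z]^{|H|}$ in $C^{H}$ and is stable under the permutation action of $H$, so $Z^{H}\rtimes H$ is a subgroup of finite index in $C\wr H$. If $Z$ is trivial then $C\wr H$ is finite, and otherwise $Z^{H}\rtimes H\cong {\Bbb Z}^{|H|}\rtimes H$ is a crystallographic group, hence virtually free abelian and in particular virtually poly-infinite cyclic. In either case $C\wr H$ is virtually poly-infinite cyclic, and a second invocation of [FJ, Prop 2.4] completes the argument.

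The whole proof is essentially bookkeeping: once one notices that $C\wr H$ stays inside the class of virtually poly-infinite cyclic groups, both statements reduce to a single citation. The only step that requires any care is checking that $Z^{H}\rtimes H$ really is a finite-index subgroup of $C\wr H$, which is immediate because $H$ permutes the coordinates of $C^{H}$ and therefore preserves $Z^{H}$, so no genuine obstacle arises.
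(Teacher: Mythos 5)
Your argument is correct, and the paper itself acknowledges that this route works: the corollary is stated as ``an immediate corollary'' of Proposition \ref{polycyclic}, whose proof is exactly your bookkeeping (everything in sight is virtually poly-infinite cyclic, so one quotes [\cite{FJ}, Proposition 2.4]). However, the paper then deliberately gives a \emph{different} proof, ``independent of Proposition \ref{polycyclic}'': it embeds $V_1\times V_2$ (respectively $V\wr H$) into $A\wr H'$ for $A$ a finitely generated free abelian group, observes that $A$ is the fundamental group of a flat torus, and invokes [\cite{FR}, Fact 3.1 and Theorem A] together with the hereditary property, rather than the Farrell--Jones result for virtually poly-infinite cyclic groups. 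The difference is not cosmetic. Your proof is shorter and reduces everything to a single citation, but it makes Corollary \ref{PVC} --- and hence everything downstream of it --- depend on [\cite{FJ}, Theorem 4.8]. The paper's route buys the independence that is exploited in the discussion after Proposition \ref{TC} and in Remark \ref{lastremark}: because the flat-torus input from \cite{FR} has a known $L$-theory analogue via \cite{BL1}, the results of the section transfer to the $L$-theoretic fibered isomorphism conjecture without having to import the $L$-theory version of [\cite{FJ}, Theorem 4.8] (which requires \cite{BFL}). So your proof establishes the stated pseudo-isotopy result, but if you adopt it you should be aware that you are reintroducing exactly the dependence the author was engineering around; the cases of Theorem \ref{introthm} that Remark \ref{lastremark} singles out would then all inherit it.
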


\begin{proof}[Proof of Corollary \ref{PVC}] First, note that 
the FIC$^P$ is true for virtually cyclic groups. Hence, for the 
first part we only 
have to prove that the FIC$^P$ is true for $V_1\t V_2$ where 
$V_1$ and $V_2$ are two infinite virtually cyclic groups. 
Note that $V_1\t V_2$ contains a finite index free abelian normal subgroup, 
say $A$, on two generators. Therefore, $V_1\t V_2$ embeds in $A\wr H$ for 
some finite group $H$. Since $A$ is isomorphic to the fundamental group 
of a flat $2$-torus, FIC$^P$ is true for $A\wr H$. See Fact 3.1 and Theorem A 
in \cite{FR}. Therefore, FIC$^P$ is true for $V_1\t V_2$ by the hereditary 
property. This proves that ${\cal 
P}^{P}$ is satisfied.

The proof of the second part is similar since for any virtually 
cyclic group $V$ and for any finite group $H$, $V\wr H$ 
is either finite or embeds in a group of the type $A\wr H'$ for some finite 
group $H'$ and where $A$ is isomorphic to a free abelian group 
on $|H|$ number of generators and, therefore, $A$ is isomorphic to the 
fundamental group of a flat $|H|$-torus. Then we can again apply Fact 3.1 
and Theorem A from \cite{FR}.\end{proof}

We will also need the following proposition.

\begin{prop} \label{graph-finite} Let $\cal G$ be a graph of finite
groups. Then $\pi_1({\cal G})$ satisfies the FICwF$^{P}$.\end{prop}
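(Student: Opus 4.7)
The plan is to reduce to the case of a finite graph and then invoke the structural machinery that has already been assembled in the previous subsections.

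First, I would handle an infinite graph $\cal G$ by writing its fundamental group as an increasing union of the fundamental groups of its finite subgraphs, as observed in Remark \ref{finitegraph}. Proposition \ref{proposition} then allows one to pass to the union, provided that the pseudo-isotopy equivariant homology theory is continuous; this continuity is precisely the content of [\cite{FL}, Theorem 7.1]. So it is enough to treat the case of a \emph{finite} graph of finite groups.

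For such a $\cal G$, every vertex group is finite and therefore virtually cyclic, and every edge group is also finite because it injects into a (finite) vertex group. Hence $\cal G$ is a finite graph of virtually cyclic groups in which all edge groups are finite, which is exactly the setting of the property $_{wf}{\cal T}^{P}$ of Definition \ref{property} with ${\cal C} = {\cal {VC}}$. Corollary \ref{tt}, which asserts that $_{wf}{\cal T}^{P}$ is satisfied, then yields FICwF$^{P}$ for $\pi_1({\cal G})$ immediately.

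The serious work therefore sits upstream, inside Proposition \ref{TC}, on which Corollary \ref{tt} depends: that is where one actually verifies the conjecture for the genuinely larger class of graphs of virtually cyclic groups that are almost a tree, allowing infinite vertex groups paired with finite edges. Once Proposition \ref{TC} is in hand, Proposition \ref{graph-finite} requires no additional input beyond the continuity-based reduction to the finite-graph case described above.
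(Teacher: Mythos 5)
Your reduction to a finite graph via Remark \ref{finitegraph} and Proposition \ref{proposition} matches the paper, but the main step is circular within the paper's logical structure. You invoke Corollary \ref{tt}, i.e.\ the property $_{wf}{\cal T}^{P}$, which is deduced from Proposition \ref{TC}; and the proof of Proposition \ref{TC} explicitly uses Proposition \ref{graph-finite} --- the statement you are trying to prove. Concretely, to verify $_{wt}{\cal T}^{P}$ inside the proof of Proposition \ref{TC}, the paper writes $\pi_1$ of a graph of virtually cyclic groups with trivial edge groups as a free product of the vertex groups with a finitely generated free group (Lemma \ref{almost1}) and then cites ``Proposition \ref{graph-finite} and Lemma \ref{finitefree}'' to get the FICwF$^{P}$ for that free group; moreover, the application of $(1)$ of Proposition \ref{residually-prop} in that same proof presupposes $_{wt}{\cal T}_{\cal C}$. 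This is also reflected in the paper's ordering: Proposition \ref{graph-finite} is established in the ``basic results'' subsection \emph{before} Proposition \ref{TC} and Corollary \ref{tt}, precisely so that it can feed into them. So your argument, as written, assumes a consequence of the result it is proving.

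The paper's actual proof is more elementary and avoids all of the $\cal T$-machinery. After reducing to a finite graph, it observes via Lemma \ref{finitefree} (i.e.\ [\cite{DD}, Theorem IV.1.6]) that $\pi_1({\cal G})$ is a finitely generated group containing a free subgroup of finite index. The FICwF$^{P}$ for free groups is a formal consequence of results of Farrell--Jones ([\cite{FL}, Lemma 2.4]; compare [\cite{FR}, Fact 3.1]), and then $(2)$ of Proposition \ref{product} --- whose hypothesis ${\cal P}^{P}$ is supplied by Corollary \ref{PVC} --- passes the FICwF$^{P}$ from a finite-index normal free subgroup up to $\pi_1({\cal G})$. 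If you want to salvage your route, you would have to replace the appeal to Corollary \ref{tt} by a direct verification of the free-group case from [\cite{FL}, Lemma 2.4], at which point you have essentially reconstructed the paper's proof anyway.
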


\begin{proof} By Remark \ref{finitegraph} we can assume that the graph is
finite. Lemma \ref{finitefree} implies that we need to show that the
FICwF$^{P}$ is true for finitely generated groups, which contains a free
subgroup of finite index. Now, it is a formal consequence of results of
Farrell-Jones
that the FICwF$^{P}$ is true for a free
group. For details see [\cite{FL}, Lemma 2.4]. 
Also compare [\cite{FR}, Fact 3.1]. Next, since 
${\cal P}^{P}$ is satisfied (Corollary
\ref{PVC}) using $(2)$ of Proposition \ref{product} we complete the
proof. 
\end{proof}

Proposition \ref{graph-finite} was used in \cite{PLMP} to calculate the 
lower $K$-groups of virtually free groups.

\subsection{Proofs of the propositions}

Recall that $\cal C$ is always a class of groups which is closed 
under isomorphism, taking subgroups and quotients and contains all the
finite groups.

\begin{proof} [Proof of Proposition \ref{maintheorem1}] 
Proof of $(1)$. Since ${\cal H}^-_*$ is continuous by Remark 
\ref{finitegraph} we can assume that the graph $\cal G$ is finite.

$1(a)$. Since the graph $\cal G$ is a 
tree and the vertex groups 
are torsion free, we can apply Lemma \ref{stark} to see that the
restriction of the homomorphism 
$f:\pi_1({\cal G})\to H_1(\pi_1({\cal G}), {\Bbb Z})/\{torsion\}=A (say)$ 
to each vertex group is injective. Let $K$ be the kernel 
of $f$. Let $T$ be the tree on which $\pi_1({\cal G})$ acts for the 
graph of group structure $\cal G$. Then $K$ also acts on $T$ 
with vertex stabilizers $K\cap g{\cal G}_v g^{-1}=(1)$ where $g\in 
\pi_1({\cal G})$ and $v\in V_{\cal G}$. Hence, by Lemma \ref{finitefree} $K$ is a free 
group (not necessarily finitely generated). Next, note that 
$A$ is a finitely generated free abelian group 
and hence the FIC$ _{\cal {VC}}$ is true for 
$A$ by Corollary \ref{finabe} and by hypothesis. Now, 
applying Lemma \ref{inverse} to the homomorphism 
$f:\pi_1({\cal G})\to A$ and noting that a torsion 
free virtually cyclic group is either trivial or infinite cyclic 
([\cite{FJ95}, Lemma 2.5])  
we complete the proof. We also need to use Lemma \ref{infin} when $K$ 
is finitely generated.

$1(b)$. The proof of this case is almost the same as that of the previous 
case.

Let $A=H_1(\pi_1({\cal G}), {\Bbb Z})$.
Now, $A$ is a finitely generated abelian group and hence the 
FIC$ _{\cal {VC}}$ is true for $A$ by 
Corollary \ref{finabe}. Next, we apply Lemma \ref{inverse} to 
$p:\pi_1({\cal G})\to A$. Again by Lemma \ref{stark} the kernel $K$ 
of this homomorphism acts on a tree with trivial stabilizers and 
hence $K$ is free. Let $V$ be a virtually cyclic subgroup of 
$A$ with $C<V$ an infinite cyclic subgroup of finite index in 
$V$. Let $C$ be 
generated by $t$. Then the inverse image $p^{-1}(V)$ 
contains $K\rtimes \l t\r$ as a 
normal finite index subgroup. By hypothesis the 
FICwF$ _{\cal {VC}}$ is true for $K\rtimes \l t\r$. 
Now, using $(2)$ of Proposition \ref{product} we see that the 
FICwF$ _{\cal {VC}}$ is true for $p^{-1}(V)$ and 
in particular, the FIC$ _{\cal {VC}}$ is true for 
$p^{-1}(V)$. This completes the proof of $1(b)$.

$1(c)$. Since the graph $\cal G$ is not a tree it is homotopically 
equivalent to a wedge of $r$ circles for $r\geq 1$. Then there is a surjective 
homomorphism 
$p:\pi_1({\cal G})\to F^r$ where $F^r$ is a free group on $r$ generators. 
And the kernel $K$ of this homomorphism is the fundamental group of the 
universal covering $\widetilde {\cal G}$ of the graph of groups $\cal G$. 
Hence, $K$ is the fundamental group of an 
infinite tree of finitely generated abelian groups. Now, we would 
like to apply Lemma \ref{inverse} to the homomorphism $p:\pi_1({\cal 
G})\to F^r$. By hypothesis and by Lemma \ref{infin} the FIC$_{\cal {VC}}$ is true 
for any semidirect product $F^n\rtimes \l t\r$, hence the FIC$_{\cal {VC}}$ is 
true for $F^r$ by the hereditary property. Since $F^r$ is torsion free, 
by Lemma \ref{inverse},   
we only have to check that the FIC$ _{\cal {VC}}$  
is true for the semidirect product $K\rtimes \l t\r$ for any action of 
$\l t\r$ on $K$. 

By Lemma \ref{stark}, the 
restriction of the 
following map to each vertex group of $\widetilde {\cal G}$ is 
injective. $$\pi_1(\widetilde {\cal G})\to H_1(\pi_1(\widetilde 
{\cal G}), {\Bbb Z}).$$ 

Let $A$ and $B$ be the range group and the kernel of the  
homomorphism respectively in the above display.
Since the commutator subgroup of a group is characteristic we 
have the following exact sequence of groups induced 
by the above homomorphism $$1\to 
B\to K\rtimes \l t\r \to A\rtimes \l t\r \to 1$$ for any action of $\l 
t\r$ on 
$K$. Recall that $K=\pi_1(\widetilde {\cal G})$. 
Now, let $K$ acts on a tree $T$, which induces the tree of 
groups structure $\widetilde {\cal G}$ on $K$. Hence, $B$ also acts on $T$ 
with vertex  
stabilizers equal to $B\cap g\widetilde {\cal G}_v g^{-1}=(1)$ 
where $v\in V_{\widetilde {\cal G}}$ and $g\in K$. This follows 
from the fact that the restriction to any vertex group of 
$\widetilde {\cal G}$ of the homomorphism 
$K\to A$ is injective. Thus $B$ acts on 
a tree with trivial stabilizers and hence $B$ is a free group by Lemma 
\ref{finitefree}. 

Next, note that the group $A$ is a countable infinitely generated 
abelian group.
Now, we can apply Lemma \ref{inverse} to the homomorphism $K\rtimes \l t\r 
\to A\rtimes \l t\r$ (and use Lemma \ref{infin} if $B$ is finitely 
generated) and $(2)$ of Proposition \ref{product} in exactly the same 
way as we did in the proof of $1(b)$. This completes the proof of 
$1(c)$.

{\it Proofs of $2(i)$ and $2(ii)$}. 
Let $e_1,e_2,\ldots , e_k$ be the finite edges of $\cal G$ 
so that each of the connected 
components ${\cal G}_1, {\cal G}_2, \ldots ,{\cal G}_n$ of  
${\cal G}-\{e_1,e_2,\ldots , e_k\}$ is a  
tree of finitely generated abelian groups of the same rank. 
By Lemma \ref{ic} a finite 
tree of finitely generated abelian groups of the same rank  
has the intersection property. Therefore, using Lemma \ref{inter} 
we see that such a tree of groups has residually finite fundamental group. 

Now, we check that the FICwF$_{\cal {VC}}$ is true for $\pi_1({\cal G}_i)$ 
for $i=1,2,\ldots , n$. 

Assume that ${\cal G}_i$ is a graph of finitely generated abelian 
groups of the same rank (say $r$). Then by Lemma \ref{ic} $\pi_1({\cal G}_i)$ 
contains a finitely generated free abelian normal subgroup $A$ of rank 
$r$ so that the quotient $\pi_1({\cal G}_i)/A$ is isomorphic to the 
fundamental group of a graph of finite type. Hence,  
by the assumption $_{wt}{\cal T} _{\cal {VC}}$ the 
FICwF$ _{\cal {VC}}$ is true for $\pi_1({\cal G}_i)/A$. 
Now, we would like to apply $(3)$ of Proposition \ref{product} to the 
following exact sequence 
$$1\to A\to \pi_1({\cal G}_i)\to \pi_1({\cal G}_i)/A\to 1.$$ 

Note that using $(2)$ of Proposition \ref{product} it is enough 
to prove that the FICwF$ _{\cal {VC}}$ is true for 
$A\rtimes \langle t\rangle$. In case $2(i)$ this follows from 
the hypothesis and Lemma \ref{infin}. And in case $2(ii)$ note that 
$A\rtimes \langle t\rangle$ contains a rank $2$ free abelian 
subgroup of finite index (Lemma \ref{last}). Therefore, we can again apply 
$(2)$ of Proposition \ref{product} and Lemma \ref{finabe} 
to see that the FICwF$ _{\cal {VC}}$ is true for
$A\rtimes \langle t\rangle$.

Now, observe that there is a finite graph of groups $\cal H$ 
so that the edge groups of $\cal H$ are finite and the 
vertex groups are isomorphic to $\pi_1({\cal G}_i)$ where 
$i$ varies over $1,2,\ldots , n$ and 
$\pi_1({\cal H})\simeq \pi_1({\cal G})$. This follows 
from Lemma \ref{almost} since $\cal G$ is almost a tree of groups.

Next, we apply 
$(1)$ of Proposition \ref{residually-prop} 
(since $\pi_1({\cal G}_i)$ is residually finite for $i=1,2,\ldots , n$)  
to $\cal H$ to complete the proof of $(2)$.  

{\it Proof of $(3)$}. First, assume that the tree is finite. By Lemma 
\ref{stark}, there is 
a homomorphism $\pi_1({\cal G})\to H_1(\pi_1({\cal G}), {\Bbb Z})$ whose restriction to any  
vertex group is injective. A limit argument proves the 
FICwF$_{\cal {VC}}$ for $H_1(\pi_1({\cal G}), {\Bbb Z})$ in case some vertex group is infinitely 
generated. Now, $(2)$ can be applied to complete the proof of 
$(3)$.\end{proof}

\begin{proof} [Proof of Proposition \ref{TC}] Lemma \ref{almost} implies that 
there is a graph of groups $\cal H$ with the same fundamental group as 
$\cal G$ so that the edge groups of $\cal H$ are finite and the vertex groups are either 
finite or fundamental groups of finite trees of groups with rank $1$ finitely 
generated abelian vertex and edge groups. Now, note that by Lemma 
\ref{ic} the tree of group (say $\cal K$) corresponding to 
a vertex group of $\cal H$ satisfies the hypothesis of Lemma \ref{fi}. 

Therefore, $\cal K$ has the intersection property. Also by Lemma 
\ref{rfcyclic} $\pi_1(\cal K)$ is residually finite. Since by 
Corollary \ref{PVC} the FICwF$^{P}$ 
is true for virtually cyclic groups, we can
apply $(1)$ and $(3)$ of Proposition \ref{residually-prop} to complete
the proof of the Proposition provided we check that $_{wt}{\cal
T}^{P}$ and ${\cal
P}^{P}$ are satisfied. By Corollary 
\ref{PVC} the last condition is satisfied. 

We now check the first 
condition. So let $\cal G$ be a graph of virtually cyclic 
groups with trivial edge groups. Using Remark \ref{finitegraph} 
we can assume that the graph is finite. Hence, by Lemma \ref{almost1} below,  
$\pi_1({\cal G})$ is a free product of a finitely generated 
free group and the vertex 
groups of $\cal G$. By Corollary \ref{PVC}, 
the FICwF$^{P}$ is true for any virtually 
cyclic group and by Proposition \ref{graph-finite} and 
Lemma \ref{finitefree} it is true for finitely generated free groups. 
Therefore, we can apply [\cite{R1}, Theorem 3.1] to conclude that the 
FICwF$^{P}$ is true for $\pi_1({\cal G})$.\end{proof}

We remark that, when we applied $(3)$ of Proposition \ref{residually-prop} 
in the 
above proof of Proposition \ref{TC}, we only needed the fact that 
the FICwF$^P$ be true for the mapping torus of a virtually 
cyclic group (see the proof of $(3)$ of Proposition \ref{residually-prop}). 
We note that for this we do not need Proposition \ref{polycyclic}, instead 
it can easily be deduced from the proof of Corollary \ref{PVC}. We just need 
to mention that the mapping torus of a virtually cyclic group is either 
virtually cyclic or it contains a two-generator 
free abelian normal subgroup of finite index (see the following lemma). 

\begin{lemma} \label{last} Suppose a group $G$ has a virtually cyclic normal subgroup 
$V_1$ with virtually cyclic quotient $V_2$. Then $G$ is virtually cyclic, when 
either $V_1$ or $V_2$ is finite, and otherwise $G$ contains a two-generator free abelian normal 
subgroup of finite index.\end{lemma}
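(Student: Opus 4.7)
The plan is to split into cases according to whether $V_1$ and $V_2$ are finite or infinite. The two cases with some $V_i$ finite are easy; the substance is the case when both are infinite, where I will build a characteristic rank-$2$ free abelian subgroup of finite index by two descent steps, one through $V_1$ and one through the quotient.

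If $V_2$ is finite, then $V_1$ itself is a virtually cyclic subgroup of finite index in $G$, so $G$ is virtually cyclic. If $V_1$ is finite and $V_2$ is infinite, pick an infinite cyclic subgroup $C\leq V_2$ of finite index and let $G'$ be its preimage, a finite-index subgroup of $G$ fitting into $1\to V_1\to G'\to {\Bbb Z}\to 1$. Because $\mathrm{Aut}(V_1)$ is finite, the conjugation action of ${\Bbb Z}$ on the finite group $V_1$ factors through a finite quotient, so $G'$ contains a further finite-index infinite cyclic subgroup; hence $G$ is virtually cyclic.

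Assume henceforth that both $V_1$ and $V_2$ are infinite. The first ingredient I would record is that any infinite virtually cyclic group $V$ contains a characteristic infinite cyclic subgroup of finite index: pick ${\Bbb Z}\cong C\leq V$ of finite index $n$; the intersection of all index-$n$ subgroups of $V$ is characteristic, of finite index (a finitely generated group has only finitely many subgroups of each given index), and is contained in $C$, hence infinite cyclic. Applying this to $V_1$ produces $C_1\leq V_1$ characteristic in $V_1$, and therefore normal in $G$. The quotient $G/C_1$ is then an extension of the finite group $V_1/C_1$ by the virtually cyclic $V_2$, so is itself virtually cyclic. Pick a normal infinite cyclic subgroup $C_2\leq G/C_1$ of finite index (the normal core of any finite-index ${\Bbb Z}$-subgroup), and let $H\leq G$ be its preimage under $p\colon G\to G/C_1$. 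Then $H\triangleleft G$ has finite index, and $1\to C_1\to H\to C_2\to 1$ realises $H$ as a ${\Bbb Z}$-by-${\Bbb Z}$ extension.

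To conclude I need a characteristic ${\Bbb Z}^2$ subgroup of finite index inside $H$. Every ${\Bbb Z}$-by-${\Bbb Z}$ group is either ${\Bbb Z}^2$ or the Klein bottle group $K=\langle a,b\mid bab^{-1}=a^{-1}\rangle$; in the former case $H$ itself works. In the Klein bottle case the subgroup $A:=\langle a,b^2\rangle$ is free abelian of rank $2$ — the identity $b^2 a b^{-2}=b(bab^{-1})b^{-1}=ba^{-1}b^{-1}=a$ shows $b^2$ centralises $a$ — and has index $2$ in $K$; a quick enumeration of the three index-$2$ subgroups of $K$ (via surjections $K^{ab}={\Bbb Z}/2\oplus{\Bbb Z}\to{\Bbb Z}/2$) shows $A$ is the only one isomorphic to ${\Bbb Z}^2$, hence is characteristic in $K$. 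Since $A$ is characteristic in the normal subgroup $H$ of $G$, it is normal in $G$; it has finite index in $G$ and is two-generator free abelian, as required. The main bookkeeping obstacle I anticipate is the Klein bottle case, but it reduces entirely to the commutator identity above plus the enumeration of index-$2$ subgroups of $K$.
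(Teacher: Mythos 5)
Your proof is correct and follows essentially the same route as the paper: reduce the case where both $V_1$ and $V_2$ are infinite to a finite-index ${\Bbb Z}$-by-${\Bbb Z}$ subgroup and invoke the dichotomy between ${\Bbb Z}^2$ and the Klein bottle group. You are in fact more careful than the paper's own proof on one point --- the paper produces the rank-two free abelian subgroup inside $p^{-1}(C)$ without addressing its normality in $G$, whereas your characteristic-subgroup descent (together with the observation that $\langle a,b^2\rangle$ is the unique index-$2$ copy of ${\Bbb Z}^2$ in $K$, hence characteristic) actually delivers the normality asserted in the statement.
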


\begin{proof} We have the following exact sequence $$1\to V_1\to G\to V_2\to 1.$$ 
For the first assertion there is nothing to prove when $V_2$ is finite.
So let $V_1$ be finite and $V_2$ infinite and let $C$ be an infinite cyclic subgroup of 
$V_2$ of finite index. Let $p$ be the 
homomorphism $G\to V_2$. Then $p^{-1}(C)$ 
has a finite normal subgroup with quotient $C$. Therefore, $p^{-1}(C)$ 
is virtually infinite cyclic. Let $C'$ be an infinite cyclic 
subgroup of $p^{-1}(C)$ 
of finite index. Hence, $C'$ is also an infinite cyclic 
subgroup of $G$ of finite 
index. This proves the first assertion. For the second assertion let $V_1$ be also infinite 
and $D$ an infinite cyclic subgroup of $V_1$ of finite index. Then 
$p^{-1}(C)\simeq D\rtimes C$. Since $C$ and $D$ are both infinite cyclic the 
only possibilities for $p^{-1}(C)$ are that it contains a free abelian subgroup on two 
generators of index either $1$ or $2$. This proves the Lemma.\end{proof}

\begin{proof} [Proof of Proposition \ref{theorem}] Let $T$ be the tree on
which the group $\pi_1({\cal G})$ acts so that the associated graph of
groups is $\cal G$. For the proof, by 
Lemma \ref{inverse} ($(3)$ of Proposition 
\ref{product}) we need to show 
that the FIC$ _{\cal C}$ (FICwF$ _{\cal C}$) 
is true for the homomorphism 
$f$. We prove $(a)$ assuming ${\cal
T} _{\cal C}$ ($_w{\cal
T} _{\cal C}$). 
 
Let $H\in {\cal C}(Q)$. Note that $f^{-1}(H)$ also acts on the tree $T$ with 
stabilizers $f^{-1}(H)\cap\{$stabilizers of the action of $\pi_1({\cal G})\}$. 
Since the restrictions of $f$ to the vertex groups  
of ${\cal G}$ have finite kernels we get that 
$f^{-1}(H)\cap\{$stabilizers 
of the action of $\pi_1({\cal G})\}$ is an extension of a finite group 
by a subgroup of $H$. If ${\cal C}={\cal {FIN}}$ then these stabilizers 
also belong to $\cal C$. If ${\cal C}={\cal {VC}}$ then 
using Lemma \ref{last} we see that the stabilizers again belong to 
$\cal C$. 

Therefore, the associated graph of groups of the action of $f^{-1}(H)$ 
on the tree $T$ has vertex groups belonging to $\cal C$. 

Now, using ${\cal T} _{\cal C}$ ($_w{\cal
T} _{\cal C}$) we 
conclude that 
the FIC$_{\cal C}$ (FICwF$_{\cal C}$) is true for $f^{-1}(H)$. This completes the 
proof of $(a)$.

For the proofs of $(b)$ and $(c)$ just replace  
${\cal T} _{\cal C}$ ($_w{\cal
T} _{\cal C}$) by  
$_f{\cal T} _{\cal C}$ ($_{wf}{\cal
T} _{\cal C}$) and  
$_t{\cal T} _{\cal C}$ ($_{wt}{\cal
T} _{\cal C}$) respectively in the above proof. 
Also use the corresponding assumption on the edge groups of the graph 
of groups $\cal G$.

\end{proof}

\begin{cor} \label{corollary} {\bf (Free products).}
Let $\cal G$ be a finite graph of groups with trivial 
edge groups so that the vertex groups satisfy the FIC$_{\cal C}$ 
(FICwF$_{\cal C}$). If ${\cal P} _{\cal 
C}$ and $_t{\cal T} _{\cal 
C}$ ($_{wt}{\cal T} _{\cal
C}$) are  
satisfied then the FIC$_{\cal C}$ (FICwF$_{\cal C}$) 
is true for $\pi_1({\cal G})$.\end{cor}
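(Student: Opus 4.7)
The plan is to reduce to Proposition \ref{theorem}$(c)$ by exhibiting a suitable surjection of $\pi_1({\cal G})$ onto the direct product of its vertex groups. Since $\cal G$ is a finite graph of groups with trivial edge groups, Bass--Serre theory (the same fact as Lemma \ref{almost1} used in the proof of Proposition \ref{TC}) yields an isomorphism $\pi_1({\cal G})\simeq G_1 * G_2 * \cdots * G_n * F$, where $G_1,\ldots,G_n$ are the vertex groups and $F$ is a finitely generated free group of rank equal to the first Betti number of the underlying graph.

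Using the universal property of the free product, I would define a surjective homomorphism $f\colon \pi_1({\cal G})\to Q := G_1\times G_2\times\cdots\times G_n$ whose restriction to each $G_i$ is the inclusion into the $i$-th factor and whose restriction to $F$ is trivial. The decisive feature of $f$ is that $f|_{G_i}$ is injective, so the restriction of $f$ to any vertex group has trivial (in particular finite) kernel, which is precisely the hypothesis required by Proposition \ref{theorem}.

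Next I would verify that $Q$ satisfies the FIC$_{\cal C}$ (resp.\ FICwF$_{\cal C}$). By the hypothesis of the corollary, each $G_i$ satisfies the FIC$_{\cal C}$ (resp.\ FICwF$_{\cal C}$), and since ${\cal P}_{\cal C}$ is assumed, an induction on $n$ using Proposition \ref{product}$(1)$ produces the conclusion for $Q$.

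Finally, since the edge groups of $\cal G$ are trivial and $_t{\cal T}_{\cal C}$ (resp.\ $_{wt}{\cal T}_{\cal C}$) is satisfied, Proposition \ref{theorem}$(c)$ applied to $f$ yields the desired conclusion for $\pi_1({\cal G})$. No serious obstacle arises in this argument; the only point requiring any attention is the well-definedness of $f$, which follows at once from the triviality of the edge groups via the universal property of the free product decomposition, so the whole argument is essentially a bookkeeping application of results already established.
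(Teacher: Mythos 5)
Your argument is correct and is essentially the paper's own proof. The paper establishes the two-factor case as Lemma \ref{claim} by mapping $G_1*G_2$ onto $G_1\times G_2$ and invoking ${\cal P}_{\cal C}$ together with Proposition \ref{theorem}$(c)$, then combines this with the decomposition of Lemma \ref{almost1}; you carry out the same surjection-onto-the-direct-product step once globally, which in addition disposes of the free factor $F$ by sending it to the identity rather than handling it via a separate appeal to $_t{\cal T}_{\cal C}$.
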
 

\begin{proof} The proof combines the following two Lemmas.\end{proof}

\begin{lemma} \label{almost1} Let $\cal G$ be a finite graph 
of groups with trivial edge groups. Then there is an
isomorphism
$\pi_1({\cal G})\simeq G_1*\cdots *G_n*F$
where $G_i$'s are vertex groups of ${\cal G}$ 
and $F$ is a free group.\end{lemma}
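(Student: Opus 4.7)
The plan is to proceed by induction on the number of edges of $\mathcal{G}$, using the basic Bass--Serre facts recalled in the paper.

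First I would reduce to the connected case, since a graph of groups whose underlying graph has several connected components has fundamental group equal to the free product of the fundamental groups of the components (by the inductive construction of $\pi_1$ on finite subgraphs mentioned after Theorem~\ref{gg}), and a free product of free products is a free product. So assume $\mathcal{G}$ is connected with $n$ edges. The base case $n = 0$ is immediate: $\pi_1(\mathcal{G})$ is a single vertex group, which fits the claimed form with $F$ trivial.

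For the inductive step, pick any edge $e$ of $\mathcal{G}$. There are two cases, handled by the definition of $\pi_1$ of a graph of groups in the one-edge situations:

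\textbf{Case 1: $e$ is separating.} Removing $e$ leaves two connected subgraphs of groups $\mathcal{G}_1$ and $\mathcal{G}_2$, each with fewer edges. Since $\mathcal{G}_e$ is trivial, the amalgamated product collapses to a free product:
\[
\pi_1(\mathcal{G}) \;\simeq\; \pi_1(\mathcal{G}_1) *_{\{1\}} \pi_1(\mathcal{G}_2) \;=\; \pi_1(\mathcal{G}_1) * \pi_1(\mathcal{G}_2).
\]
Apply the induction hypothesis to $\mathcal{G}_1$ and $\mathcal{G}_2$ and concatenate, noting that the free product of two free groups is free.

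\textbf{Case 2: $e$ is non-separating.} Then $\mathcal{G}' := \mathcal{G}-\{e\}$ is connected with $n-1$ edges, and
\[
\pi_1(\mathcal{G}) \;\simeq\; \pi_1(\mathcal{G}')*_{\mathcal{G}_e} \;=\; \pi_1(\mathcal{G}') *_{\{1\}} \;\simeq\; \pi_1(\mathcal{G}') * \langle t \rangle,
\]
since an HNN-extension with trivial associated subgroup is the free product with an infinite cyclic group. Again the induction hypothesis applied to $\mathcal{G}'$ and absorption of $\langle t\rangle$ into the free factor $F$ finishes the step.

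The only mildly delicate point is being sure the elementary one-edge identifications (amalgamated free product with trivial amalgamation is a free product, and HNN-extension over the trivial subgroup is the free product with $\mathbb Z$) are legitimate; these are standard and follow directly from the presentations of $\pi_1(\mathcal{G})$ recalled in the preliminaries. I do not anticipate any serious obstacle.
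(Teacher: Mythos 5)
Your proof is correct and follows essentially the same route as the paper's: induction on the number of edges, splitting into the separating case (amalgamated product over the trivial group, hence a free product) and the non-separating case (HNN-extension over the trivial group, hence a free product with $\Bbb Z$). The only cosmetic difference is that you make the reduction to connected graphs and the absorption of the $\Bbb Z$ factors into $F$ explicit, which the paper leaves implicit.
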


\begin{proof} The proof is by
induction on the number of edges of the graph. If the graph
has no edge then there is nothing to prove. So assume
${\cal G}$ has
$n$ edges and that the lemma is true for graphs with $\leq
n-1$ edges.
Let
$e$ be an edge of ${\cal G}$. If ${\cal G}-\{e\}$ is connected
then $\pi_1({\cal G})\simeq \pi_1({\cal
G}_1)*{\Bbb Z}$ where ${\cal G}_1={\cal G}-\{e\}$ is a graph
with $n-1$
edges. On the other hand if ${\cal G}-\{e\}$ has two
components say ${\cal
G}_1$ and ${\cal G}_2$ then $\pi_1({\cal G})\simeq \pi_1({\cal
G}_1)* \pi_1({\cal G}_2)$ where ${\cal G}_1$ and ${\cal G}_2$
has $\leq
n-1$ edges. Therefore, by induction we complete the proof of the
lemma.\end{proof}

\begin{lemma} \label{claim} Assume that the properties ${\cal P} _{\cal
C}$ and $_t{\cal T} _{\cal
C}$ ($_{wt}{\cal T} _{\cal
C}$) are
satisfied. If the FIC$ _{\cal C}$ 
(FICwF$ _{\cal C}$) is true 
for  
$G_1$ and $G_2$ then the FIC$ _{\cal C}$ 
(FICwF$ _{\cal C}$) 
is 
true for $G_1 * G_2$.\end{lemma}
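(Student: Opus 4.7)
The plan is to realize $G_1*G_2$ as the fundamental group of a graph of groups with trivial edge group and then reduce, via a suitable surjection, to a product $G_1\times G_2$ for which the conjecture is already known.

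First, I would observe that $G_1*G_2\cong\pi_1(\mathcal{G})$, where $\mathcal{G}$ is the finite graph of groups with two vertices carrying the groups $G_1$ and $G_2$ and a single edge carrying the trivial group. Next, using the universal property of the free product, I would define the canonical homomorphism
\[
f\colon G_1*G_2\longrightarrow G_1\times G_2
\]
induced by the inclusions $G_i\hookrightarrow G_1\times G_2$. This $f$ is surjective, and its restriction to each vertex group $G_i$ equals the inclusion $G_i\hookrightarrow G_1\times G_2$, which is injective; in particular, the restriction to each vertex group has finite (indeed trivial) kernel.

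Then I would verify that the target $G_1\times G_2$ satisfies the required form of the conjecture. By hypothesis the FIC$_{\mathcal{C}}$ (respectively FICwF$_{\mathcal{C}}$) holds for $G_1$ and $G_2$, and property $\mathcal{P}_{\mathcal{C}}$ is satisfied, so Proposition \ref{product}$(1)$ yields the FIC$_{\mathcal{C}}$ (respectively FICwF$_{\mathcal{C}}$) for $G_1\times G_2$.

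Finally, I would invoke Proposition \ref{theorem}$(c)$ applied to the graph of groups $\mathcal{G}$, the surjection $f$, and the group $Q=G_1\times G_2$: the edge group of $\mathcal{G}$ is trivial, and the assumption $_t\mathcal{T}_{\mathcal{C}}$ (respectively $_{wt}\mathcal{T}_{\mathcal{C}}$) is satisfied by hypothesis, so the proposition delivers the FIC$_{\mathcal{C}}$ (respectively FICwF$_{\mathcal{C}}$) for $\pi_1(\mathcal{G})=G_1*G_2$. There is no real obstacle here; the only subtlety is simply noticing that the free product maps canonically to the direct product and that this map is injective on each vertex group, which is exactly the hypothesis needed to feed Proposition \ref{theorem}$(c)$.
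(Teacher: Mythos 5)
Your proposal is correct and follows essentially the same route as the paper: the paper also passes to the surjection $p\colon G_1*G_2\to G_1\times G_2$, applies Proposition \ref{product}$(1)$ to handle the target, and then invokes Proposition \ref{theorem}$(c)$ using the tree action with trivial edge stabilizers and vertex stabilizers conjugate to $G_1$ or $G_2$. The only cosmetic difference is that you phrase the setup via the two-vertex graph of groups rather than directly via the Bass--Serre tree.
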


\begin{proof} Consider the surjective homomorphism $p:G_1 * G_2\to G_1 
\times G_2$. By $(1)$ of Proposition \ref{product} the 
FIC$ _{\cal C}$ (FICwF$ _{\cal C}$) 
is true for $G_1\times G_2$. 

Now, note that the group $G_1 * G_2$ acts on a tree with 
trivial edge stabilizers and vertex stabilizers conjugates of $G_1$ or 
$G_2$. Therefore, the restrictions of $p$ to the stabilizers of this action 
of $G_1*G_2$ on the tree are injective. Hence, we are in the situation of 
 $(c)$ of Proposition \ref{theorem},

This completes the proof of the Lemma.\end{proof} 

\begin{rem}{\rm We recall that in the case of the 
FICwF$^{P}$, Lemma \ref{claim} coincides with the Reduction Theorem 
(Theorem 3.1) in 
\cite{R1}.}\end{rem}

\begin{proof}[Proof of Proposition \ref{residually-prop}] Since the 
equivariant homology theory is assumed to be continuous when the graph 
is infinite, we can assume
that $\cal G$ is a finite graph of groups.

\noindent
$(1).$ By hypothesis the edge groups of $\cal G$ are finite and the
vertex groups 
are residually finite and satisfy the FICwF$ _{\cal 
{C}}$. By Lemma \ref{graph-res} $\pi_1({\cal G})$ is residually finite. Let 
$F_1,F_2,\ldots ,F_n$ be the edge groups. 
Let $1\neq g\in\cup_{i=1}^nF_i$. Since 
$\pi_1({\cal G})$ is residually finite there is a finite-index normal subgroup 
$N_g$ of 
$\pi_1({\cal G})$ so that $g\in \pi_1({\cal G})-N_g$. Let $N=\cap_{g\in 
\cup_{i=1}^nF_i}N_g$. Then $N$ is a finite-index normal subgroup of
$\pi_1({\cal G})$ so that $N\cap (\cup_{i=1}^nF_i)=\{1\}$. 

Let $T$ be a tree on which $\pi_1({\cal G})$ acts so that the 
associated graph of group structure on $\pi_1({\cal G})$ is $\cal G$. Hence, $N$ 
also acts on $T$. Since $N$ is normal in $\pi_1({\cal G})$ and 
$N\cap (\cup_{i=1}^nF_i)=\{1\}$, the edge stabilizers of 
this action are trivial and the vertex stabilizers are subgroups of
conjugates of 
vertex groups of $\cal G$. Therefore, by Corollary \ref{corollary} the FICwF$_{\cal 
{C}}$ is true for $N$. Now, $(2)$ of Proposition \ref{product} completes
the proof 
of $(1)$.

\noindent
$(2).$ The proof is by induction on the number of edges. If 
there is no edge then there is one vertex and hence by 
hypothesis the induction starts. Since by hypothesis  
the FICwF$_{\cal {VC}}$ is true for $C\rtimes \l t\r$ for 
$C\in {\cal D}$ it is true for $C$ also 
by Lemma \ref{finiteindex}.
Therefore, assume that the result is true for graphs with $\leq n-1$ edges, 
which satisfy the hypothesis.
So let $\cal G$ be a finite graph of groups which satisfies the
hypothesis and has $n$ edges. 
Since $\cal G$ has the intersection property there is a normal subgroup
$N$ of $\pi_1({\cal G})$ contained in all the edge groups and of finite
index in some edge group, say ${\cal G}_e$ of the edge $e$. 
Let ${\cal G}_1$ be the 
graph of groups with $\cal G$ as the underlying graph and the vertex
and the edge groups are ${\cal G}_x/N$ where $x$ is a vertex and an edge  
respectively. Then $\pi_1({\cal G}_1)\simeq \pi_1({\cal G})/N$. 
Let ${\cal G}_2={\cal G}_1-\{e\}$. It is now easy to check that 
the connected components of ${\cal G}_2$ satisfy the
hypotheses and also have the intersection property.
Also, since ${\cal G}_2$ has $n-1$ edges, by the induction hypothesis 
$\pi_1({\cal H})$ satisfies the FICwF$_{\cal 
{VC}}$ where $\cal H$ is a connected component of ${\cal G}_2$. 
We now use Lemma \ref{inter} to conclude that $\pi_1({\cal H})$
is also residually finite where $\cal H$ is as above. 
Therefore, by $(1)$ $\pi_1({\cal G}_1)$
satisfies the FICwF$ _{\cal 
{VC}}$. Next, we apply $(3)$ of Proposition \ref{product} to the homomorphism 
$\pi_1({\cal G})\to \pi_1({\cal G}_1)$.  Note that using $(2)$ of
Proposition \ref{product} it is enough to show the FICwF$_{\cal
{VC}}$ is true for the inverse image of any infinite cyclic 
subgroup of $\pi_1({\cal G}_1)$, but such a group is of the 
form $N\rtimes \l t\r$. Now, since $N$ is a subgroup of the edge 
groups of $\cal G$ and $\cal D$ is closed under 
taking subgroups we get $N\in \cal D$, as by hypothesis all the 
vertex groups of $\cal G$ belong to $\cal D$. Again by definition 
of $\cal D$ the FICwF$_{\cal
{VC}}$ is true for $N\rtimes \l t\r$. This 
completes the proof of $(2)$.
 
\noindent
$(3).$ The proof of $(3)$ follows from $(2)$. For the polycyclic case we
only need to note that virtually polycyclic groups are residually finite
and quotients and subgroups of virtually polycyclic groups are 
virtually polycyclic. Also the   
mapping torus of a virtually polycyclic group is again 
virtually polycyclic.

For the nilpotent case, recall from \cite{MR} that the
fundamental group of a finite graph of finitely generated nilpotent
groups is subgroup separable if and only if the graph of groups satisfies
the intersection property. Also note that finitely generated nilpotent
groups are virtually polycyclic.

\noindent
$(4).$ Note that by hypothesis the FICwF$ _{\cal {VC}}$ 
is true for closed surface groups and by 
\cite{GB1} closed surface groups 
are residually finite. Therefore, by Lemma \ref{res-surface} 
and by $(1)$ it is enough
to consider a finite graph of groups whose vertex and edge groups are
infinite closed surface groups and the graph of groups satisfies the hypothesis. 
Again by Lemma \ref{res-surface} we have
the exact sequence: $1\to H\to \pi_1({\cal G})\to \pi_1({\cal
G})/H\to 1$ where $H$ is a closed surface group and $\pi_1({\cal  
G})/H$ is isomorphic to the fundamental group of a finite-type graph of 
groups and hence contains a finitely generated free subgroup of finite
index by Lemma \ref{finitefree}. Hence, by $_{wt}{\cal T}_{\cal {VC}}$ and by $(2)$ of Proposition \ref{product}
the FICwF$ _{\cal {VC}}$ is true for $\pi_1({\cal
G})/H$. Now, apply $(3)$ of Proposition \ref{product} to the homomorphism
$\pi_1({\cal G})\to \pi_1({\cal  
G})/H$. Using $(2)$ of Proposition \ref{product} it is enough to prove the
FICwF$ _{\cal {VC}}$ for $H\rtimes \l t\r$. Since $H$ is a
closed surface group the action of $\l t\r$ on $H$ is induced by a
diffeomorphism of a surface $S$ so that $\pi_1(S)\simeq H$. Hence,
$H\rtimes \l t\r$ is isomorphic to the fundamental group of a closed
$3$-manifold which fibers over the circle. Therefore, using the hypothesis 
we complete the proof.\end{proof}

\subsection{Proofs of the theorems}

\begin{proof}[Proof of Theorem \ref{residually}] By Corollary \ref{tt}
 $_{wf}{\cal T}^{P}$ is satisfied and hence $_{wt}{\cal T}^{P}$ 
is also satisfied. Next, by Corollary, \ref{PVC} ${\cal
P}^{P}$ is satisfied. Therefore, the
proof of $(1)$ is completed using $(1)$ of Proposition 
\ref{residually-prop}.

For the proof of $(2)$ apply $(b)$ of Proposition \ref{theorem} 
and Corollary \ref{tt}.
\end{proof}

\begin{rem}{\rm Using Proposition \ref{polycyclic},  
the same proof works to prove the more general statement in case $(2)$, when the 
kernel of $f$ restricted to any vertex group is virtually polycyclic.}\end{rem}

\begin{proof}[Proof of Theorem \ref{ip}] Corollaries \ref{tt}, \ref{PVC}
and $(2)$ of Proposition \ref{residually-prop} prove the theorem.
\end{proof}

\begin{proof}[Proof of Theorem \ref{introthm}] Corollaries \ref{tt},
\ref{PVC}, Proposition \ref{polycyclic} and $(3)$ of 
Prop-\\osition \ref{residually-prop} prove $(1)$ and
$(2)$ of the theorem. 

To prove $(3)$ we need only to use $(4)$ of
Proposition \ref{residually-prop}, Corollary \ref{tt} and that the
FICwF$^{P}$ is true for the
fundamental groups of $3$-manifolds fibering over the circle from 
\cite{R1} and \cite{R2}.

The proof of $(4)$ follows from $2(i)$ of 
Proposition \ref{maintheorem1}, 
Corollaries \ref{tt} and \ref{PVC}. Since the FICwF$ _{\cal {VC}}$ 
is true for ${\Bbb Z}^n\rtimes \langle t\rangle$ for all $n$ 
by Proposition \ref{polycyclic}.

For the proof of $(5)$ first assume that the graph is finite and has more than one edge. Then get a 
homomorphism $f:\pi_1({\cal G})\to A:=H_1(\pi_1({\cal G}), {\Bbb Z})$ using Lemma 
\ref{stark} where the restriction of $f$ to any vertex group is injective. 
Now, we apply Corollaries \ref{tt} and \ref{PVC} and Proposition 
\ref{product} to the above homomorphism. Therefore, one only needs that $f^{-1}(C)$ 
for any infinite cyclic subgroup $C$ of $A$ satisfies the FICwF$^P$. But 
the restricted action of $f^{-1}(C)$ on the tree (on which $\pi_1({\cal G})$ acts) 
has infinite cyclic or trivial vertex stabilizers. Therefore, we conclude the proof using 
the Reduction Theorem in \cite{R1} and the hypothesis.

Next, if the graph has one edge then $\pi_1({\cal G})\simeq G_1*_HG_2$ where $G_1, G_2$ and 
$H$ are finitely generated abelian. Then $H$ is a normal subgroup of $\pi_1({\cal G})$ 
with quotient $G_1/H*G_2/H$. The proof now follows the similar steps as in the previous 
case using Proposition \ref{product}.
\end{proof}

\begin{proof}[Proof of Proposition \ref{k-l-theory}] Since the conjecture in $L$-theory 
and lower $K$-theory commutes with direct limits (Remark \ref{finitegraph}) 
we can assume that the tree of 
virtually cyclic groups is finite. Also since the conjecture is true for 
virtually cyclic groups we can start proving the proposition by induction 
on the number of edges. We need the following lemma.

\begin{lemma} Let $G*_VG'$ be an amalgamated free product where $G$ and $G'$ act  
on finite-dimensional $CAT(0)$ spaces properly, cocompactly and isometrically and $V$ is a  
virtually cyclic group. Then $G*_VG'$ also acts on a finite-dimensional $CAT(0)$
space properly, cocompactly and isometrically.\end{lemma}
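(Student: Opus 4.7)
The plan is to construct a finite-dimensional $CAT(0)$ space on which $G*_VG'$ acts properly, cocompactly, and isometrically, by gluing copies of $X_G$ and $X_{G'}$ along the Bass--Serre tree. The central tool is Reshetnyak's gluing theorem (Bridson--Haefliger, \emph{Metric Spaces of Non-Positive Curvature}, Chapter II.11), which asserts that the gluing of two $CAT(0)$ spaces along a common complete convex subspace is again $CAT(0)$.

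First, I would locate $V$-invariant closed convex subspaces in $X_G$ and $X_{G'}$ on which $V$ acts cocompactly, using the $CAT(0)$ geometry of virtually cyclic isometry groups. If $V$ is finite, the Bruhat--Tits fixed point theorem provides nonempty closed convex $V$-fixed sets in both $X_G$ and $X_{G'}$. If $V$ is infinite virtually cyclic, then a generator of a finite-index infinite cyclic subgroup of $V$ must act as a hyperbolic isometry (because $V$ acts properly and cocompactly), and by the Flat Strip / Product Decomposition Theorem its minimal set is a closed convex subspace of $X_G$ of the form $\mathbb{R}\times K_G$, $V$-invariant, with $V$ acting cocompactly; likewise in $X_{G'}$.

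Next, I would fix a standard $V$-equivariant $CAT(0)$ model $M$ --- a point when $V$ is finite, or a copy of $\mathbb{R}$ with a chosen proper cocompact $V$-action when $V$ is infinite virtually cyclic --- and enlarge $X_G$ to a larger proper $CAT(0)$ space $\widetilde{X}_G$ on which $G$ still acts properly and cocompactly, and which contains $M$ as a $V$-equivariant closed convex subspace. The enlargement is obtained via $V$-equivariant mapping cylinders attached along the convex subspace produced in the previous step (with possible metric rescaling to match translation lengths); Reshetnyak's theorem guarantees that each such attachment preserves the $CAT(0)$ property, and the construction keeps dimensions finite. The same construction applied to $X_{G'}$ yields $\widetilde{X}_{G'}$ containing the same $V$-equivariant copy of $M$.

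Finally, let $T$ be the Bass--Serre tree of $G*_VG'$: over each vertex of $T$ stabilized by a conjugate of $G$ (respectively $G'$) place a copy of $\widetilde{X}_G$ (respectively $\widetilde{X}_{G'}$) with the conjugated action, and for each edge of $T$, whose stabilizer is a conjugate of $V$, identify the two adjacent vertex spaces along their common copy of $M$. Iterated application of Reshetnyak's theorem over $T$ yields a finite-dimensional $CAT(0)$ space $Y$. By construction $G*_VG'$ acts on $Y$ isometrically; properness follows from properness of the vertex actions together with the discreteness of $T$, and cocompactness follows from cocompactness of the vertex actions together with the fact that $T/(G*_VG')$ consists of a single edge with two vertices. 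The main obstacle is the enlargement step: the $V$-actions on $X_G$ and $X_{G'}$ are \emph{a priori} unrelated (different translation lengths, different transverse geometry of the minimal sets), and reconciling them while simultaneously preserving the $CAT(0)$, properness, and cocompactness conditions requires a careful mapping-cylinder construction in which the standard model $M$ embeds $V$-equivariantly as an honestly convex subspace.
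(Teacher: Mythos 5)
Your strategy is the same one the paper relies on: the paper's entire proof is a citation to the proof of Corollary II.11.19 in Bridson--Haefliger, and that proof is precisely the equivariant gluing of vertex spaces over the Bass--Serre tree along tubes modelled on a point or on $\mathbb{R}$, with Reshetnyak's theorem supplying the $CAT(0)$ property. So the route is right, and your honest flagging of the translation-length mismatch (resolved by rescaling one of the metrics) is exactly the issue BH addresses. Two steps, however, are wrong as literally written and need repair.

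First, $V$ does \emph{not} in general act cocompactly on $\mathrm{Min}(\gamma)\cong\mathbb{R}\times K_G$: take $G=\mathbb{Z}^2$ acting on $\mathbb{R}^2$ and $V$ a cyclic direct factor, so that $\mathrm{Min}(\gamma)$ is all of $\mathbb{R}^2$. What the construction actually needs is a single $V$-invariant axis. Since $\gamma$ acts on the splitting $\mathbb{R}\times K_G$ as (translation, identity) and $\langle\gamma\rangle$ has finite index in $V$, the image of $V$ in $\mathrm{Isom}(K_G)$ is finite and hence fixes some $k_0\in K_G$; the axis $\mathbb{R}\times\{k_0\}$ is then the closed convex $V$-invariant subspace on which $V$ acts cocompactly, and it is onto this (after rescaling) that $M$ embeds. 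Second, and more seriously, a vertex of the Bass--Serre tree has one incident edge for every coset $gV\in G/V$ --- typically infinitely many --- so the vertex space must contain one convex copy of $M$ (equivalently, one attached half-tube) for \emph{each} incident edge, namely the full $G$-orbit of the chosen one; this is also forced if $G$ is to act on $\widetilde{X}_G$ at all. Identifying all the neighbouring vertex spaces along a single ``common copy of $M$,'' as your step 4 states, glues distinct edge spaces to one another, destroys properness, and does not yield the Bass--Serre tree of spaces. With these two corrections your argument becomes the proof of [BH, Corollary II.11.19]; the only content the paper's one-line proof adds to that citation is the observation that the construction manifestly preserves finite dimensionality, which you do note.
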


\begin{proof} The lemma follows easily from the proof of 
[\cite{BH}, Corollary 11.19, p. 357].\end{proof}

The proof of the proposition now follows from \cite{BL1} and noting that any 
virtually cyclic group acts on a finite-dimensional $CAT(0)$ space properly, 
cocompactly and isometrically.\end{proof}

\subsection{Some deductions}
We now show that a positive solution to the Problem in the 
beginning of this section will imply that the conjecture is true for solvable and for 
one-relator groups. We will have to show that both classes of groups 
can be obtained, in some suitable sense, from groups which act on trees so 
that the vertex stabilizers satisfy the conjecture.

\noindent
{\bf Solvable groups.} Recall that in \cite{R5} we had shown that the FICwF 
in both pseudo-isotopy and $L$-theory is true for a virtually solvable group 
if and only if the same conjecture is true for closely crystallographic groups. 
A closely crystallographic group is by definition of the form 
$A\rtimes C$ where $A$ is a torsion free abelian group and $C$ is an 
infinite cyclic group so that $A$ is an irreducible ${\Bbb Q}[C]$-module. 
Now, note that the semidirect product structure gives an action of the 
closely crystallographic group on a tree with stabilizers isomorphic to $A$. 
Since the conjecture is true for $A$, a positive solution to the 
Problem will imply the conjecture for any virtually solvable group.

\noindent
{\bf One-relator groups.} Let $G$ be a one-relator group. Then 
$G\simeq H*F$ where $H$ is a finitely generated one-relator group and 
$F$ is a free group. Therefore, by [\cite{R5}, Reduction Theorem], 
Corollary \ref{tt} and Proposition \ref{integer}, we can assume that $G$ 
is finitely generated. Let $l$ be the length of the relator in the group $G$. 
Then by a result of Bieri (\cite{Bi}) $G$  
acts on a tree with stabilizers subgroups of one-relator groups with the relator 
lengths $\leq l-1$. Since a one-relator group with relator length 
$1$ is a free product of a free group and a finite cyclic group, by induction 
on $l$, using again [\cite{R5}, Reduction Theorem], Corollary \ref{tt} and 
Proposition \ref{integer} and using a positive solution to the problem, 
we deduce the conjecture 
for all one-relator groups. 

\subsection{Examples} 

Below we describe some examples of groups 
for which the results in this article are applicable. Furthermore, 
we show that these groups are new and that they are neither 
$CAT(0)$ nor hyperbolic.

\begin{exm}\label{end1}{\rm {\bf (Almost) a tree of  
finitely generated
abelian groups where the vertex and the edge groups of any 
component subgraph have the same rank:} Fundamental group of 
such a graph of groups can get very
complicated, for example in the simplest case of amalgamated
free product of two infinite cyclic groups over an infinite
cyclic group, that is $H=Z*_Z Z$ where $Z$ is infinite cyclic, 
produces the $(p,q)$-torus knot group where the two inclusions
$Z\to Z$ defining the amalgamation are power raised by $p$ and
$q$, $(p,q)=1$. Though the FICwF$^P$ is
known for knot groups (\cite{R1}), most of the other groups in this class 
for which we prove the FICwF$^P$ are new. We further recall that 
in general these examples of groups where the edge groups have  
rank $\geq 2$ are not $CAT(0)$ (see the discussion after the 
proof of Proposition 6.8 on page 500 in \cite{BH}).}\end{exm}

\begin{exm}\label{end}{\rm 
{\bf Graphs of virtually polycyclic groups:} We consider an 
amalgamated free product $H$ of two nontrivial infinite virtually polycyclic groups
over a finite group. Next, we recall that in [6] the fibered 
isomorphism conjecture in the pseudo-isotopy case was
proved for the class of groups 
which act cocompactly and properly discontinuously on symmetric 
simply connected nonpositively curved
Riemannian manifolds. In the proof of  
[\cite{BFJP}, Theorem A] it was noted that the condition `symmetric' 
can be replaced by `complete' if we consider torsion free groups. 
See also [\cite{FR}, Theorem A].
We choose polycyclic groups
so that $H$ does not belong to this class. One example
of such a polycyclic group $S$ is of the type $1\to Z\to S\to  Z^2\to 1$ as
described below, where $Z$ is infinite cyclic.

Let G be the Lie group consisting of those
$3 \times 3$ matrices with real entries whose diagonal entries are all
equal to $1$, entries below the diagonal are all equal to $0$,
and entries above the diagonal are arbitrary. Note that G is
diffeomorphic
to Euclidean 3-space. Let S be the subgroup of G whose entries above the
diagonal are restricted to be integers. Then S is discrete and cocompact.
Let E be the coset space of G by S. It clearly fibers over the 2-torus
with fiber the circle. And the fundamental group of E is S, which is
nilpotent
but not abelian. On the other hand, in \cite{Y} it was 
shown that the fundamental group of a closed nonpositively curved
manifold, which is nilpotent must be abelian. This shows that
$E$ cannot support a nonpositively curved Riemannian metric. Now, by 
[\cite{KL}, Corollary 2.6] it follows that $S$ cannot even  
embed in a  group (called {\it Hadamard group}), which acts  
discretely and cocompactly on a 
complete simply connected nonpositively curved space (that is a $CAT(0)$-space).
 
Now, consider $H_i=S\t F_i$ or $H_i=S\wr F_i$ where $F_i$ 
is a finite group for $i=1,2$.
Next, we take the amalgamated free product of $H_1$ and $H_2$, $H=H_1*_FH_2$ along some
finite group $F$. Then $H$ does not embed in a Hadamard group as before by 
[\cite{KL}, Corollary 2.6] and $H$ is not virtually 
polycyclic. 
But $H$ satisfies the FICwF$^P$ by $(1)$ of Theorem 1.1 and Proposition 5.4.}\end{exm}

\begin{exm}\label{end0}{\rm {\bf Graphs of residually finite groups with 
finite edge grou-\\ps:} Let $S$ be the fundamental group of a compact 
Haken $3$-manifold, which does not support any nonpositively curved 
Riemannian metric. Such $3$-manifolds can easily be constructed by 
cutting along an incompressible torus in a compact Haken 
$3$-manifold and then gluing differently. See \cite{KL} 
for this kind of construction. Next,  
let $H_1$ and $H_2$ be two residually finite 
groups for which the FICwF$^P$ is true and such that $S$ is embedded in 
$H_1$. It is easy to construct such $H_1$, for 
instance take $H_1=S*G*F_1$ or  
$H_1=(S\t G)\wr F_1$ or any such combination where $G$ 
is a finitely generated free group and $F_1$ is a finite group. 
By the same argument as in Example \ref{end} (and 
using \cite{R1} and \cite{R2}) it 
follows that $H=H_1*_FH_2$ (along some finite group $F$) 
satisfies the FICwF$^P$ but is neither virtually polycyclic nor embeds 
in a Hadamard group.}\end{exm}

Let us now note that the group $H$ considered in the above examples 
is also not hyperbolic as it contains a free abelian subgroup on more 
than one generator. 

Finally, we remark that in a recent paper (\cite{BL1}) the fibered 
isomorphism conjectures in $L$- and lower $K$-theory are proved  
for hyperbolic groups and for $CAT(0)$-groups, which act  
on finite-dimensional $CAT(0)$-spaces.

\subsection{Open problems}
We state some open problems related to this article on the 
(fibered) isomorphism conjecture.

\noindent
{\bf A.} Show that the (fibered) isomorphism conjecture is true for 
$A\rtimes {\Bbb Z}$ for a 
torsion-free abelian group $A$ and for an arbitrary action of ${\Bbb Z}$ on $A$.
Note that a positive answer to this problem will imply the conjecture for all 
solvable groups. See \cite{R5}.

\noindent
{\bf B.} This is a more general situation compared to {\bf A}. Show that the 
conjecture is true for $G\rtimes {\Bbb Z}$ assuming the conjecture for $G$. 
This is a very important open problem and will imply the conjecture for 
poly-free groups. It is open even when $G$ is finitely generated and free. 
For certain situations 
the answers are known, for example, when $G$ is a surface group and the action 
is realizable by a diffeomorphism of the surface. See \cite{R5}.

\noindent
{\bf C.} Prove $(1)$ of Theorem \ref{residually} without the assumption `residually finite'. 
This will imply that one only needs to prove the fibered isomorphism conjecture 
for finitely presented groups with one end. 

\noindent
{\bf D.} Prove the (fibered) isomorphism conjecture for the fundamental group of a 
graph of virtually cyclic groups. Even for the graph of infinite cyclic groups this is 
an open problem. If the underlying graph is a tree, then in Proposition \ref{k-l-theory} 
we proved it for the $L$-theory and lower $K$-theory case.

\begin{rem}\label{lastremark}{\rm Finally, we remark  
that in this section, [\cite{FJ}, Theorem 4.8] is used 
(see Proposition \ref{polycyclic}) in the proofs of 
$(1)$, $(2)$ (when the polycyclic or the nilpotent groups are 
not virtually cyclic), $(3)$ and $(5)$ (when the ranks of the abelian groups are $\geq 2$) 
of Theorem \ref{introthm}. See the proof of Corollary \ref{PVC} and the 
discussion after the proof of Proposition \ref{TC}. In this connection we note 
here that using the recent work   
in \cite{BL1} all the results in this section can 
be deduced in the $L$-theory case of the fibered isomorphism  conjecture. 
The same proofs will go through.  
But for this we need to use the $L$-theory version of [\cite{FJ}, Theorem 4.8]  
in the proofs of the particular cases of the items  
of Theorem \ref{introthm} as mentioned above. See \cite{BFL} for 
the proof of [\cite{FJ}, Theorem 4.8] in the $L$-theory case.}\end{rem}

\medskip
\noindent
{\bf Acknowledgment} 
\medskip

Part of the work in Section 3 was supported by a 
fellowship of the Alexander von Humboldt Foundation, Germany.

%\newpage
\bibliographystyle{plain}
\ifx\undefined\bysame
\newcommand{\bysame}{\leavevmode\hbox to3em{\hrulefill}\,}
\fi

\end{document}